\numberwithin{equation}{section}
\declaretheorem[style=plain,parent=section]{theorem}
\declaretheorem[style=plain,sibling=theorem]{corollary}
\declaretheorem[style=plain,sibling=theorem]{lemma}
\declaretheorem[style=plain,sibling=theorem]{proposition}
\declaretheorem[style=definition,sibling=theorem]{definition}
\declaretheorem[style=definition, qed=\hfill $\diamond$, sibling=definition]{example}
\declaretheorem[style=remark,sibling=theorem]{remark}
\newcommand{\red}[1]{\textcolor{red}{#1}}
\newcommand{\blue}[1]{\textcolor{blue}{#1}}
\title{An introduction to local tropicalization}
\author[P. Popescu-Pampu, D. Stepanov]{Patrick Popescu-Pampu and Dmitry Stepanov} 
\date{14 February 2025}
\subjclass[2020]{14B05 (primary), 14T10, 14T90, 32S25}
\keywords{Formal germs, local tropicalization, Newton polyhedra, semivaluations, splice type singularities, toric birational morphisms.}
\begin{document}

{\bf This text will appear in Volume 8 of the {\em Handbook of Geometry and Topology of Singularities}, 
Springer.}

\begin{abstract}
    In this paper we explain four viewpoints on the {\em local tropicalization} 
     of formal subgerms of toric germs, which is a local analog of the global tropicalization 
     of subvarieties of algebraic tori. We start by illustrating some of those viewpoints for plane 
     curve singularities, then we pass to arbitrary dimensions. We conclude by describing several  
     variants and extensions of the notion of local tropicalization presented in this paper.
\end{abstract}

\vspace{1cm}

\maketitle

\tableofcontents

\medskip
\section{Introduction}  \label{sect:intro}
\medskip

The aim of this paper is to present various viewpoints on the notion of {\em local tropicalization},  
introduced by the authors in their 2013 paper \cite{PS 13} as a local analog of the global notion of tropicalization of a subvariety of an algebraic torus. We assume that the reader is familiar with the basics of singularity theory and toric geometry. For details about global tropicalization, one may consult either the introductory papers \cite{K 08}, \cite{T 08}, \cite{K 09}  of Kajiwara, Teissier and Katz 
or Maclagan and Sturmfels' textbook \cite{MS 15}.

Let us explain briefly what does it mean to tropicalize locally a reduced 
complex formal germ $Y \hookrightarrow (\mathbb{C}^n, 0)$ 
whose irreducible components are not contained in the union $\partial \,  \mathbb{C}^n$ of coordinate hyperplanes of $\mathbb{C}^n$. The initial definitions of \cite[Definitions 6.6, 6.7]{PS 13} involved spaces of valuations, which allowed us to apply them both to local and global situations, for arbitrary commutative 
rings, not necessarily containing a field. 

But the shortest definition of the local tropicalization of the embedding 
\index{Local tropicalization}   \index{Tropicalization!local}
$Y \hookrightarrow (\mathbb{C}^n, 0)$ is most likely as {\em the topological closure inside $\mathbb{R}^n$ of the union of rays $\mathbb{R}_{\geq 0} w$ generated by the weight vectors $w=(w_1, \dots, w_n)$ which appear as initial exponent vectors of the formal arcs 
   \[  t \mapsto (c_1 t^{w_1} + \cdots \  , \  \dots \  , \ c_n t^{w_n}+ \cdots )  \]
 contained in $Y$ but not contained in $\partial \, \mathbb{C}^n$} (that is, such that $c_j \in \mathbb{C}^*$ and $w_j \in \mathbb{Z}_{>0}$ for every $j \in \{1, \dots, n\}$). For instance, if $Y$ is a plane curve singularity defined by 
 a series $f(x,y) \in \mathbb{C}[[x,y]]$, then its local tropicalization is the union of the rays orthogonal to the 
 compact edges of the Newton polygon of $f$ (see Corollary \ref{cor:arcweightplanecurve}).

For the geometrically-inclined reader, perhaps the most suggestive viewpoint on local tropicalization is the
following {\em toric-geometrical} one. Seen as an affine toric variety, $\mathbb{C}^n$  has the set
$\mathcal{F}_0$ of faces of the non-negative 
orthant $\sigma_0:= (\mathbb{R}_{\geq 0})^n$ of its vector space $\mathbb{R}^n$ of weight vectors 
as associated fan. Note that $\sigma_0$ also belongs to $\mathcal{F}_0$, as its nonproper face. 
Let $\mathcal{F}$ be a fan whose cones are contained in $\sigma_0$. There exists a 
canonical birational morphism 
    \[  \pi_{\mathcal{F}} : \mathcal{X}_{\mathcal{F}} \to \mathbb{C}^n \]
 from the toric variety $\mathcal{X}_{\mathcal{F}}$ associated to the fan $\mathcal{F}$ to the affine toric variety $\mathbb{C}^n = \mathcal{X}_{\mathcal{F}_0}$. This birational morphism is a {\em modification}, that is, it is moreover  {\em proper},  if and only if the {\em support} $| \mathcal{F} |$ of $\mathcal{F}$ (that is, if the union of its cones) is equal to $\sigma_0$. Denote by $Y_{\mathcal{F}}$ the strict transform of $Y$ by $\pi_{\mathcal{F}}$ and let 
      \[  \pi : Y_{\mathcal{F}} \to Y \]
be the restriction of $\pi_{\mathcal{F}}$ to $Y_{\mathcal{F}}$. {\em It may happen that $\pi$ is a modification without $ \pi_{\mathcal{F}}$ being so} (see Example \ref{ex:compactfan}). 

\medskip
    {\bf Question:} {\em Under which condition is the morphism $\pi : Y_{\mathcal{F}} \to Y $ proper?} 
\medskip

{\bf Answer:} {\em Precisely when the  support $| \mathcal{F} |$ of $\mathcal{F}$  
contains the local tropicalization of $Y \hookrightarrow (\mathbb{C}^n, 0)$}! That is, this local tropicalization may be defined as {\em the intersection of the supports of the fans $\mathcal{F}$ such that $\pi : Y_{\mathcal{F}} \to Y$ is proper}. 
\medskip

This is the announced toric-geometrical interpretation of the notion of local tropicalization. 
In this paper we will present two more definitions of local tropicalization for embeddings $Y \hookrightarrow (\mathbb{C}^n, 0)$ as above and we will explain the equivalences of the four definitions (see Theorem \ref{thm:coincidthm}). We will indicate also related works (see Section \ref{sect:variants}). 

\medskip
Let us describe the structure of the paper. 
Each time a notation is introduced, it is included in a $\boxed{\mathrm{box}}$ 
and each time a terminology is presented, it is written in  {\bf boldface characters}. 
We start by explaining basic ideas about local tropicalization in dimension two, 
for plane curve singularities. Namely, in Section \ref{sect:motivex} we illustrate the above 
toric interpretation of local tropicalization using a cuspidal cubic curve in the affine plane $\mathbb{C}^2$. 
In Section \ref{sect:Newtonpolyg} we recall the notion of {\em Newton polygon} of a plane 
curve singularity $Y \hookrightarrow (\mathbb{C}^2, 0)$ in a way suitable for our purposes 
and we explain in this context the equivalence of the above two definitions of local tropicalization. 
Then we pass to arbitrary dimensions. In Section \ref{sect:notations} we explain our notations 
and basic facts about {\em toric geometry} and {\em valuation theory}. 
In Section \ref{sect:weightsofarcsandvals} 
we turn the toric-geometric viewpoint on local tropicalization into a definition of local tropicalization 
for certain formal subgerms $Y$ of a germ of affine toric variety $(\mathcal{X}_{\sigma}, 0)$ and we explain how to associate weight vectors to certain arcs included in $Y$ or to semivaluations 
of the local ring of $Y$. In Section \ref{sect:coincid} we formulate  Theorem \ref{thm:coincidthm},  
stating that various definitions of the local tropicalization of an embedding of $Y$ 
into a toric germ  $(\mathcal{X}_{\sigma}, 0)$ are equivalent and we explain intuitively 
some ingredients of its proof. 
In Section \ref{sect:localtropprinc} we describe the local tropicalization of a principal 
effective divisor on a toric germ in terms of its Newton polyhedron. In Section \ref{sec:splicetype} 
we describe our work done in collaboration with Cueto on the
local tropicalizations of a large and important class of surface singularities, 
Neumann and Wahl's {\em splice type singularities}. Finally, in Section \ref{sect:variants} 
we present variants and extensions of the notion of local tropicalization explained in this paper, 
mentioning papers of Kajiwara, Payne, Ulirsch, Esterov, J. Giansiracusa, N. Giansiracusa and Lorscheid.

\medskip
\section{Motivating considerations about the cusp singularity}  \label{sect:motivex}
\medskip

In this section we illustrate the toric-geometrical interpretation of local tropicalization explained in Section \ref{sect:intro} on the example of a plane cubic curve with a cusp singularity at the origin. 
For this purpose, we recall basic facts about toric surfaces, their fans and their orbits (see points 
\eqref{inclpresbij}--\eqref{weightvect1paramgp} below).
\medskip

Throughout the paper, we denote by $\boxed{Z(f)}$ the principal Cartier divisor defined by a rational function $f$ on a normal complex analytic variety, or the germ of such a divisor defined by an element of a local ring of such a variety. We denote by $\boxed{\mathbb{C}^2_{x,y}}$ the complex affine plane with 
coordinates $x$ and $y$.

\begin{definition}  \label{def:modif}
      A {\bf modification}\index{Modification} of an irreducible complex analytic 
      variety $X$ is a proper bimeromorphic morphism $\pi : X_{\pi} \to X$. 
\end{definition}

An {\bf embedded resolution}\index{Resolution!embedded} of a curve contained in a smooth surface $X$ is a
modification $\pi$ of the surface such that $X_{\pi}$ is smooth, the total transform of the curve is a normal
crossings divisor and  $\pi$ induces an isomorphism between the complement of the exceptional set of $\pi$ in
$X_\pi$ and the complement of the singular locus of the curve in $X$.  There exists a {\bf minimal embedded
resolution}\index{Resolution!minimal}, through which all other resolutions factor. It is obtained by blowing up successively the points of the curve and of its total transforms where one does not have locally normal
crossings (see \cite[Section 3.4]{W 04}).

    In order to illustrate this fact, let $\boxed{Y} \hookrightarrow  \mathbb{C}^2_{x,y} = : \boxed{S_0}$  
be the plane cubic curve $Z(f)$, where:
     \begin{equation}  \label{eq:defcubic}
        f(x,y) := y^2 - 2 x^3 + x^2 y. 
      \end{equation}
 It has only one singular point, at the origin $\boxed{p_0}: = (0,0)$ of $S_0$, as may be seen by solving the system of equations  $f = \partial_x f = \partial_y f =0$. The germ of $Y$ at $p_0$ is 
 a {\bf cusp singularity}\index{Singularity!cusp}. That is, it is formally isomorphic 
 to the germ at the origin of the semicubical 
 parabola $Z(u^2 - v^3) \hookrightarrow  \mathbb{C}^2_{u,v}$. This fact may be shown explicitly 
 by first completing the square in the polynomial \eqref{eq:defcubic}:
   \[  f(x,y) = \left(y + \frac{x^2}{2}\right)^2 - 2x^3 \left(1 + \frac{x}{8}\right) \]
 and by performing then the formal (even analytic) change of variables:
   \[ u := y + \frac{x^2}{2}, \  \  \  \   v := 2^{1/3} x \left(1 + \frac{x}{8}\right)^{1/3}, \]
  in which the last cubic root is expanded using Newton's binomial series expansion formula for 
  $(1 + t)^{1/3}$. 
  
 One may reach the minimal embedded resolution of $Y$ by blowing up successively three points 
  $p_0, p_1, p_2$ of $Y$ and of its strict transforms, as illustrated on the upper part of Figure \ref{fig:embrescusp}. 
  For each $i \in \{0, 1, 2\}$, denote by
    \[ \boxed{\pi_i} : S_{i+1} \to  S_i \]
  the blowup morphism of the surface $\boxed{S_i}$ at the point $p_i$ and by 
  $\boxed{E_i} \hookrightarrow S_{i+1}$ the exceptional divisor of $\pi_i$. It is a smooth 
  rational curve of self-intersection $-1$. For each pair $(i,j)$ such that $j \geq i +2$, 
  we denote by $\boxed{E_{i,j}}$  the strict transform of $E_i$ on the surface $S_j$ and by $\boxed{Y_j}$ the strict transform of the curve $Y$ on the same surface. For instance, $Y_0 = Y$.  
  
  Each time one blows up a smooth point of a compact curve contained in a smooth surface, 
  the self-intersection number of its strict transform is one less than the initial self-intersection number 
  (see \cite[Lemma 4.3]{L 71} or \cite[Lemma 8.1.6]{W 04}). 
  Therefore, one has the following self-intersection numbers on the final surface $S_3$:
      \[ E_{0,3}^2 =  -3, \  E_{1,3}^2 =  -2, \  E_2^2 =  -1.   \]
   As $E_{0,3}$ and $E_{1,3}$ have negative self-intersections, they may be contracted to 
   normal singular points (see \cite[Theorem 4.9]{L 71}). 
   Denote by $\boxed{\tilde{S}}$ the resulting surface and by 
      \[ \boxed{\gamma} : S_3 \to \tilde{S} \]
   the contraction morphism (see the right diagonal arrow of Figure \ref{fig:embrescusp}).

     \begin{figure}[h!]
    \begin{center}
\begin{tikzpicture}[scale=0.48]

\begin{scope}[shift={(1,0)}]
        \draw [-, color=green!70!blue, thick] (-1,0)--(1,0);
        \draw [-, color=blue, thick] (0,-1)--(0,1);
 \draw[very thick, color=orange](0,0) .. controls (1,0.2) ..(1.7,1.5); 
\draw[very thick, color=orange](0,0) .. controls (1,-0.2) ..(1.7,-1.5); 
\node [above, color=orange] at (1.9,1.7) {$Y$};

\node[draw,circle, inner sep=1.5pt,color=black, fill=black] at (0,0){};
\node [below, color=black] at (-0.4,0) {$p_0$};
\node [above, color=black] at (-1,2) {$\boxed{S_0}$};
\node [below, color=blue] at (0, -1) {$Z(x)$};
\node [left, color=green!70!blue] at (-1, 0) {$Z(y)$};
\end{scope}

%%%%%%%%%%%%%%%%%%

\begin{scope}[shift={(6,0)}]
     \draw [-, color=black, thick](-2,0) -- (2,0);
     \draw [-, color=green!70!blue, thick] (0,-1)--(0,1);
          \node [below, color=green!70!blue] at (0.2, -1) {$Z(y)$};
      \draw [-, color=blue, thick] (-1.2,-1)--(-1.2,1);
        \node [below, color=blue] at (-1.2, -1) {$Z(x)$};
\draw[very thick, color=orange] (-1.3,1.5) .. controls (0,-0.5) .. (1.3,1.5);
\node [above, color=orange] at (1.5, 1.7) {$Y_1$};

\node [below, color=black] at (-0.5,0) {$p_1$};
\node [below, color=black] at (2,0) {$E_0$};
\node[draw,circle, inner sep=1.5pt,color=black, fill=black] at (0,0){};
\node [above, color=black] at (-0.5,2) {$\boxed{S_1}$};

\end{scope}

%%%%%%%%%%%%%%%

\begin{scope}[shift={(12,0)}]
     \draw [-, color=black, thick](-2,0) -- (2,0);
      \draw [-, color=green!70!blue, thick] (-1.2,-1)--(-1.2,1);
            \node [above, color=green!70!blue] at (-1, 1) {$Z(y)$};
       \draw [-, color=blue, thick] (-1,-1.4)--(1,-1.4);
         \node [right, color=blue] at (1, -1.4) {$Z(x)$};
\draw [-, color=black, thick] (0,-2)--(0,2);
\node [right, color=black] at (0,-2.2) {$E_{0,2}$};
\node [above, color=black] at (2,-0.9) {$E_1$};
\draw[very thick, color=orange](-1.7,-1.7) -- (1.7, 1.7); 
\node [above, color=orange] at (1.9,1.7) {$Y_2$};

\node[draw,circle, inner sep=1.5pt,color=red, fill=red] at (0,0){};
\node [below, color=red] at (0.5,0) {$p_2$};
\node [above, color=black] at (-0.7,2) {$\boxed{S_2}$};
\end{scope}

%%%%%%%%%%%%%%

\begin{scope}[shift={(19,0)}]
      \draw [-, color=red, very thick](-2.5,0) -- (2.5,0);
      \draw [-, color=blue, thick] (-2.5,-1)--(-0.5,-1);
          \node [left, color=blue] at (-2.5, -1) {$Z(x)$};
      \draw [-, color=green!70!blue, thick] (0.5,-1)--(2.5,-1);
            \node [right, color=green!70!blue] at (2.5, -1) {$Z(y)$};
\draw [-, color=orange, very thick] (0,-1.5)--(0,1.5);
\node [above, color=orange] at (0,1.7) {$Y_3$};
\node [above, color=red] at (-2.5,0) {$E_2$};
\node [right, color=black] at (-1.6,-1.8) {$E_{0,3}$};
\node [right, color=black] at (1.4,-1.8) {$E_{1,3}$};
\node[draw,circle, inner sep=1.5pt,color=black, fill=black] at (0,0){};
\node [below, color=black] at (0.5,0) {$p_3$};
\draw[thick, color=black](1.5,-1.5) -- (1.5, 1.5); 
\draw[thick, color=black](-1.5,-1.5) -- (-1.5, 1.5); 
    \node [above, color=black] at (-1.7,2) {$\boxed{S_3}$};
\end{scope}

%%%%%%%%%%%%%%

\begin{scope}[shift={(9.5,-6)}]
     \draw [-, color=red, very thick](-2.5,0) -- (2.5,0);
\draw [-, color=orange, very thick] (0,-1.5)--(0,1.5);
\node [above, color=orange] at (0,1.7) {$\tilde{Y}$};
\node [above, color=red] at (-2.5,0) {$\tilde{E}_2$};
\node[draw,circle, inner sep=1.5pt,color=black, fill=black] at (0,0){};
\node [below, color=black] at (0.5,0) {$p_3$};
\draw[thick, color=green!70!blue](1.5,-1) -- (1.5, 1); 
      \node [below, color=green!70!blue] at (1.5, -1) {$Z(y)$};
\draw[thick, color=blue](-1.5,-1) -- (-1.5, 1); 
     \node [below, color=blue] at (-1.5, -1) {$Z(x)$};
       \node[draw,circle, inner sep=1.5pt,color=black, fill=black] at (-1.5,0){};
       \node[draw,circle, inner sep=1.5pt,color=black, fill=black] at (1.5,0){};
      \node [above, color=black] at (-1.7,2) {$\boxed{\tilde{S}}$};
\end{scope}

%%%%%%%%%%%%%% intermediate arrows %%%%%%%%%

      \draw[<-](2.5,0)--(3.5,0);
           \node [above, color=black] at (3,0) {$\pi_0$};
     
      \draw[<-](8.5,0)--(9.5,0);
             \node [above, color=black] at (9,0) {$\pi_1$};
     
      \draw[<-](14.5,0)--(15.5,0);
              \node [above, color=black] at (15,0) {$\pi_2$};
      
      \draw[<-](13.5,-4)--(16,-2);
            \node [below, color=black] at (14.9,-3.2) {$\gamma$};
      
       \draw[<-](2.5,-2)--(5.5,-4);
              \node [below, color=black] at (4,-3.2) {$\tilde{\pi}$};

\end{tikzpicture}
\end{center}
 \caption{The minimal embedded resolution $\pi_0 \circ \pi_1 \circ \pi_2$ 
     and a toric partial resolution $\tilde{\pi}$ of the cuspidal cubic $Y$ 
    contained in the plane $S_0 = \mathbb{C}^2$. The exceptional curve created by blowing up 
    the point $p_i$ is denoted by $E_i$ and its strict transform on the surface $S_j$ 
    is denoted by $E_{i,j}$. The strict transforms of $Z(x)$ and $Z(y)$ are still denoted by $Z(x)$ and $Z(y)$. 
    The strict transform of $Y$ on the surface $S_j$ is denoted by $Y_j$ and that on $\tilde{S}$ is 
    denoted by $\tilde{Y}$.}
\label{fig:embrescusp}
   \end{figure}
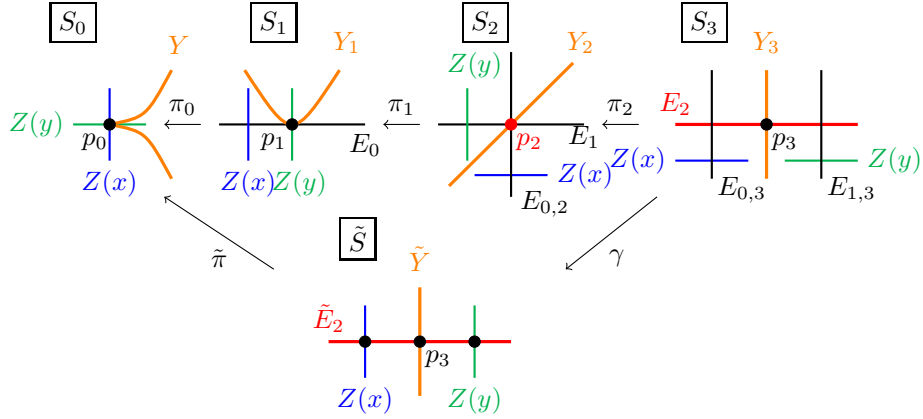

   The composed morphism $\pi_0 \circ \pi_1 \circ \pi_2 : S_3 \to S_0$ may be factored as 
       \[ \pi_0 \circ \pi_1 \circ \pi_2 = \tilde{\pi} \circ \gamma \]
   where 
      \[  \boxed{\tilde{\pi}} : \tilde{S} \to S_0 \]
   contracts the image $\boxed{\tilde{E}_2} := \gamma(E_2) \hookrightarrow \tilde{S}$ to the initial 
   point $p_0$ (see the left diagonal arrow of Figure \ref{fig:embrescusp}). Note that the restriction of $\gamma$ to the curve $E_2$ induces an 
   isomorphism from $E_2$ to $\tilde{E}_2$. For this reason, we still denote by $p_3$ the 
   image $\gamma(p_3)$.

 In  Figure \ref{fig:embrescusp} we have also represented the axes of coordinates 
  $Z(x), Z(y) \hookrightarrow \mathbb{C}^2= S_0$ and their strict transforms on the various surfaces 
  appearing in the drawing. For simplicity, we denote those strict transforms in the same way. 
  Note that the morphisms $\pi_i$ blow up singular points of the 
  total transforms of the divisor $Z(xy)$. This means that those morphisms are {\em toric}, 
  if one endows the affine plane $\mathbb{C}^2$ with its standard toric structure with dense torus 
  $(\mathbb{C}^*)^2$. As the morphisms $\pi_i$ are also birational, it means that they may be described by 
  successive  subdivisions of {\bf fans}, \index{Fan} that is, finite sets of strictly convex rational 
  cones inside the {\bf weight plane} $\mathbb{R}^2$ of the toric surface $\mathbb{C}^2$, closed under 
  the operation of taking faces and such that the intersection of any two of its cones is a common face. 
  The cones of such a fan may be of dimension {\em two}, of dimension {\em one} -- 
  in which case they are {\bf rays}  spanned by well-defined {\bf primitive vectors} of $\mathbb{Z}^2$, 
  that is, vectors of coprime coordinates -- or of dimension {\em zero} -- that is, equal to 
  the origin of the weight plane. A two-dimensional cone has three proper faces -- 
  two rays and the origin -- while a one-dimensional cone has only one proper face, namely, the origin. 
  
  Let us recall in which way a fan $\mathcal{F}$ contained in the weight plane $\mathbb{R}^2$ encodes a 
  {\bf toric surface} \index{Toric!surface} $\boxed{\mathcal{X}_{\mathcal{F}}}$ (see \cite{O 88}, \cite{F 93}, \cite{CLS 11} 
  or \cite[Section 1.3]{GGP 20}):
    \begin{enumerate}
       \item  \label{inclpresbij}
          There is an inclusion-preserving bijection between the cones $\tau$ of the fan 
           and the affine open toric subsurfaces $\boxed{\mathcal{X}_{\tau}}$ of $\mathcal{X}_{\mathcal{F}}$.  
           The set of $\mathbb{C}$-valued points of $X_\tau$ is the set of all \emph{monoid}
           homomorphisms from $\tau^\vee\cap\mathbb{Z}^2$ to $\mathbb{C}$, where $\tau^\vee$ is the
           dual cone of $\tau$ (see Section~\ref{sect:notations}) and $\mathbb{C}$ is viewed here as a
            \emph{multiplicative} monoid. Each such surface $\mathcal{X}_{\tau}$ contains 
           a unique closed orbit $\boxed{O_{\tau}}$, which is canonically a complex algebraic torus of 
           dimension $2 - \dim \tau$. It is also the unique smallest dimensional orbit of $\mathcal{X}_{\tau}$. 
           As a set, $O_\tau$ consists of those monoid homomorphisms which send
           $(\tau^\vee\smallsetminus \tau^\perp)\cap\mathbb{Z}^2$ to $0\in \mathbb{C}$, where
           $\tau^\perp$ is the orthogonal complement (or the annihilator) of $\tau$ in the dual vector space. 
       \item   \label{trivcone}
           To the trivial cone $0$ reduced to the origin of the weight plane $\mathbb{R}^2$ 
          corresponds an algebraic torus $\mathcal{X}_0$ isomorphic to $(\mathbb{C}^*)^2$. 
          It is Zariski-dense in 
          $\mathcal{X}_{\mathcal{F}}$. 
       \item   \label{raycone}
          To a one-dimensional cone, that is, a ray $\mathbb{R}_{\geq 0} w$ 
          spanned by a non-zero weight vector $w$ with 
           integral coordinates, corresponds a toric affine open surface 
           $\mathcal{X}_{\mathbb{R}_{\geq 0} w}$ 
           isomorphic to $\mathbb{C} \times \mathbb{C}^*$. 
        \item  \label{reg2dcone}
            To a {\bf regular} two-dimensional cone, that is, to a cone spanned by a basis of 
           the weight lattice  $\mathbb{Z}^2$, corresponds an affine open surface isomorphic 
           to $\mathbb{C}^2$. 
        \item   \label{nonreg2dcone}
           To a two-dimensional cone which is not regular corresponds an affine open surface 
           which is normal and has one singular point, namely, its $0$-dimensional orbit. 
        \item   \label{weightvect1paramgp}
            If $(p,q) \in \mathbb{Z}^2$ is an integer weight vector, then its corresponding 
               {\bf one parameter subgroup}  
              \[  \begin{array}{ccc}
                      \mathbb{C}^* & \to & (\mathbb{C}^*)^2 \\
                       t & \mapsto & (t^p, t^q) 
                  \end{array} \]
             has a limit inside $\mathcal{X}_{\mathcal{F}}$ when $t \to 0$ if and only if $(p,q)$ 
             belongs to the {\bf support} \index{Support!of a fan} 
             $\boxed{| \mathcal{F} |}$ of $\mathcal{F}$, that is, to the union of its cones. 
             This limit is the identity element  
             of the orbit $O_{\tau}$ seen as a group, where $\tau$ is the unique cone 
             of $\mathcal{F}$ whose relative interior contains $(p,q)$. 
    \end{enumerate}
  
  Note that if $\tau$ is a strictly convex rational cone of the weight plane $\mathbb{R}^2$ and $\mathcal{F}$ is the fan consisting of its faces, then one has $\mathcal{X}_{\mathcal{F}} = \mathcal{X}_{\tau}$.

    \begin{figure}[h!]
    \begin{center}
\begin{tikzpicture}[scale=0.5]

\begin{scope}[shift={(1,0)}]
        \fill[fill=yellow] (0,0) --(3.5,0)-- (3.5,3.5) -- (0, 3.5) --cycle;
        \draw [-, color=black, thick] (0,0)--(3.5,0);
             \node [below, color=blue] at (3.5,0) {$Z(x)$};
        \draw [-, color=black, thick] (0,0)--(0,3.5);
             \node [left, color=green!70!blue] at (0,3.5) {$Z(y)$};
  \node[draw,circle, inner sep=1.5pt,color=blue, fill=blue] at (1,0){};
  \node[draw,circle, inner sep=1.5pt,color=green!70!blue, fill=green!70!blue] at (0,1){};
           \node [left, color=black] at (2.5,1.5) {$p_0$};
\end{scope}

%%%%%%%%%%%%%%%%%%

\begin{scope}[shift={(7,0)}]
       \fill[fill=yellow!40!white] (0,0) --(3.5,0)-- (3.5,3.5) -- (0, 3.5) --cycle;
        \fill[fill=yellow] (0,0) -- (3.5,3.5) -- (0, 3.5) --cycle;
        \draw [-, color=black, thick] (0,0)--(3.5,0);
             \node [below, color=blue] at (3.5,0) {$Z(x)$};
        \draw [-, color=black, thick] (0,0)--(0,3.5);
              \node [left, color=green!70!blue] at (0,3.5) {$Z(y)$};
                    \draw [-, color=black, thick] (0,0)--(3.5,3.5);
                         \node [right, above, color=black] at (3.5,3.5) {$E_0$};
  \node[draw,circle, inner sep=1.5pt,color=blue, fill=blue] at (1,0){};
  \node[draw,circle, inner sep=1.5pt,color=green!70!blue, fill=green!70!blue] at (0,1){};
   \node[draw,circle, inner sep=1.5pt,color=black, fill=black] at (1,1){};
            \node [left, color=black] at (1.8,2.5) {$p_1$};
\end{scope}

%%%%%%%%%%%%%%%

\begin{scope}[shift={(13,0)}]
         \fill[fill=yellow!40!white] (0,0) --(3.5,0)-- (3.5,3.5) -- (0, 3.5) --cycle;
           \fill[fill=yellow] (0,0) -- (3.5,3.5) -- (1.75, 3.5) --cycle;
        \draw [-, color=black, thick] (0,0)--(3.5,0);
               \node [below, color=blue] at (3.5,0) {$Z(x)$};
        \draw [-, color=black, thick] (0,0)--(0,3.5);
                \node [left, color=green!70!blue] at (0,3.5) {$Z(y)$};
          \draw [-, color=black, thick] (0,0)--(3.5,3.5);
                  \node [right, above, color=black] at (3.5,3.5) {$E_{0,2}$};
           \draw [-, color=black, thick] (0,0)--(1.75,3.5);
                   \node [left, above, color=black] at (1.75,3.5) {$E_1$};
  \node[draw,circle, inner sep=1.5pt,color=blue, fill=blue] at (1,0){};
  \node[draw,circle, inner sep=1.5pt,color=green!70!blue, fill=green!70!blue] at (0,1){};
     \node[draw,circle, inner sep=1.5pt,color=black, fill=black] at (1,1){};
      \node[draw,circle, inner sep=1.5pt,color=black, fill=black] at (1,2){};
                \node [left, color=black] at (2.7,2.9) {$p_2$};
\end{scope}

%%%%%%%%%%%%%%

\begin{scope}[shift={(19,0)}]
         \fill[fill=yellow!40!white] (0,0) --(3.5,0)-- (3.5,3.5) -- (0, 3.5) --cycle;
        \draw [-, color=black, thick] (0,0)--(3.5,0);
               \node [below, color=blue] at (3.5,0) {$Z(x)$};
        \draw [-, color=black, thick] (0,0)--(0,3.5);
                \node [left, color=green!70!blue] at (0,3.5) {$Z(y)$};
                  \draw [-, color=black, thick] (0,0)--(3.5,3.5);
                         \node [right, above, color=black] at (3.5,3.5) {$E_{0,3}$};
                   \draw [-, color=black, thick] (0,0)--(1.75,3.5);
                           \node [left, above, color=black] at (1.5,3.5) {$E_{1,3}$};
                    \draw [-, color=red, thick] (0,0)--(2.33,3.5);
                             \node [left, above, color=red] at (2.45,3.5) {$E_2$};
  \node[draw,circle, inner sep=1.5pt,color=blue, fill=blue] at (1,0){};
  \node[draw,circle, inner sep=1.5pt,color=green!70!blue, fill=green!70!blue] at (0,1){};
     \node[draw,circle, inner sep=1.5pt,color=black, fill=black] at (1,1){};
      \node[draw,circle, inner sep=1.5pt,color=black, fill=black] at (1,2){};
            \node[draw,circle, inner sep=1.5pt,color=red, fill=red] at (2,3){};
\end{scope}

%%%%%%%%%%%%%%

\begin{scope}[shift={(9.5,-6)}]
          \fill[fill=yellow!40!white] (0,0) --(3.5,0)-- (3.5,3.5) -- (0, 3.5) --cycle;
        \draw [-, color=black, thick] (0,0)--(3.5,0);
               \node [below, color=blue] at (3.5,0) {$Z(x)$};
        \draw [-, color=black, thick] (0,0)--(0,3.5);
               \node [left, color=green!70!blue] at (0,3.5) {$Z(y)$};
                     \draw [-, color=red, thick] (0,0)--(2.33,3.5);
                           \node [left, above, color=red] at (2.45,3.5) {$\tilde{E}_2$};
  \node[draw,circle, inner sep=1.5pt,color=blue, fill=blue] at (1,0){};
  \node[draw,circle, inner sep=1.5pt,color=green!70!blue, fill=green!70!blue] at (0,1){};
       \node[draw,circle, inner sep=1.5pt,color=red, fill=red] at (2,3){};
\end{scope}

%%%%%%%%%%%%%% intermediate arrows %%%%%%%%%

      \draw[<-](5,2)--(6,2);
     
      \draw[<-](11,2)--(12,2);
     
      \draw[<-](17,2)--(18,2);
      
      \draw[<-](15,-3)--(18,-1);
      
       \draw[<-](4,-1)--(7,-3);

\end{tikzpicture}
\end{center}
 \caption{The fans corresponding to the toric surfaces of Figure \ref{fig:embrescusp} and their subdivisions
corresponding to the toric morphisms of the same figure. We write the name of the
corresponding irreducible component of the total transform of $Z(xy) \hookrightarrow S_0$. 
Inside each regular  two-dimensional cone which gets subdivided, 
we indicate the name of the corresponding $0$-dimensional orbit.  The right diagonal arrow is the toric modification which blows down the orbit closures $E_{0,3}$ and $E_{1,3}$. The left diagonal arrow blows then down the orbit closure $\tilde{E}_2$.}
\label{fig:fansforcusp}
   \end{figure}
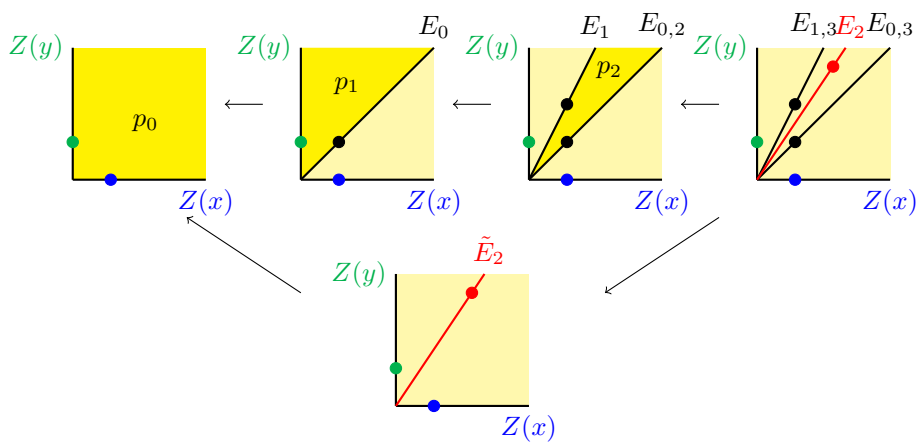

The process of successive subdivisions of fans corresponding to the toric morphisms 
of  Figure \ref{fig:embrescusp} is represented in Figure \ref{fig:fansforcusp}. 
The starting fan $\boxed{\mathcal{F}_0}$, corresponding to the 
surface $S_0  = \mathbb{C}^2$, consists of the non-negative quadrant $\boxed{\sigma_0}$ 
of the weight plane $\mathbb{R}^2$ 
of $\mathbb{C}^2$ and of its faces. Near each ray $\tau$ we write the name of the irreducible component 
of the total transform of $Z(xy)$ which is the closure of the corresponding one-dimensional orbit $O_{\tau}$.
That irreducible component is isomorphic either to $\mathbb{C}$ (in the case of the strict transforms of 
$Z(x)$ and $Z(y)$) or to $\mathbb{C}\mathbb{P}^1$ (in the case of an exceptional divisor $E_{i,j}$). On the
upper horizontal line of  Figure \ref{fig:fansforcusp}, each blowup is represented combinatorially by the
subdivision of a regular cone $\tau$ along a ray. Inside each such cone $\tau$ we indicate the name of
the associated point, which is the $0$-dimensional orbit of $\mathcal{X}_{\tau}$. The subdividing ray is
spanned by the sum $w_i + w_j$ of the primitive weight vectors $w_i$ and $w_j$ spanning the edges of
$\tau$. All those primitive vectors are drawn as small dots. 
  
  Let us consider inside each surface $S_1, S_2, S_3$ appearing in the upper part of Figure \ref{fig:embrescusp} the affine toric open set $\mathcal{X}_{\tau}$ isomorphic to $\mathbb{C} \times \mathbb{C}^*$ which corresponds to the ray $\tau$ which was added to get the surface. The strict transforms of the curve $Y$ inside those affine toric surfaces $\mathcal{X}_{\tau}$ are the intersections of
$\mathcal{X}_{\tau}$ with the curves $Y_1, Y_2, Y_3$, that is, $Y_1 \smallsetminus p_1, Y_2 \smallsetminus p_2$ and  $Y_3$ respectively. Indeed, only $Y_3$ does not pass through a singular point of the total transform of the divisor $Z(xy)$, that is, through a $0$-dimensional orbit of the ambient toric surface, and $\mathbb{C} \times \mathbb{C}^*$ does not contain such an orbit. Restricting the toric birational morphisms $S_i \to S_0$ to those three strict transforms, we get  the restrictions: 
  \[   \pi_1 | \colon Y_1 \smallsetminus p_1 \to Y, \  \   
      \pi_1 \circ \pi_2 | \colon Y_2 \smallsetminus p_2 \to Y, \  \ 
      \pi_1 \circ \pi_2 \circ \pi_3 | \colon Y_3 \to Y.  \] 
   Only the last morphism is {\em proper}, similarly to what happens for the morphism 
$\tilde{\pi} : \tilde{S} \to S_0$ (see the bottom of Figure \ref{fig:embrescusp}). The curve $Y_3$ is the strict transform of $Y$ in the affine toric surface $\mathcal{X}_{\mathbb{R}_{\geq 0} (2,3)}$ determined by the ray $\mathbb{R}_{\geq 0} (2,3)$ spanned by the weight vector $(2,3)$. If we want to also turn the first two morphisms into proper ones, we need to add the orbits $p_1$ and $p_2$ respectively, that is, to look inside the affine toric surfaces $\mathcal{X}_{\tau}$, where $\tau$ is one of the two-dimensional regular cones decorated by $p_1$ and $p_2$ in Figure \ref{fig:fansforcusp}. Note that both cones contain the ray $\mathcal{X}_{\mathbb{R}_{\geq 0} (2,3)}$. 
  
  In fact, {\em for every fan $\mathcal{F}$ whose support is contained in $\sigma_0$, the restriction 
    $\pi \colon Y_{\pi} \to Y$ of the toric morphism $\pi_{\mathcal{F}} \colon \mathcal{X}_{\mathcal{F}} \to \mathcal{X}_{\sigma_0} = \mathbb{C}^2$ to the strict transform $Y_{\pi}$ in $\mathcal{X}_{\mathcal{F}}$ is proper if and only if $| \mathcal{F} |$ contains the ray $\mathbb{R}_{\geq 0} (2,3)$.} Therefore, according to the toric-geometrical interpretation of local tropicalization explained in Section \ref{sect:intro}, {\em this ray is the local tropicalization of the germ $(Y, p_0)$.}

\medskip
{\bf Question:} {\em How is the ray $\mathbb{R}_{\geq 0} (2,3)$ related to the defining series $f(x,y)$ 
     of the germ $(Y, p_0)$ (see formula \eqref{eq:defcubic})?}
\medskip

{\bf Answer:}  It is the ray orthogonal to the unique compact edge of the Newton polygon of $f$. 
\medskip

Proposition \ref{prop:torcharltroptwod} below states the analogous result for all plane curve singularities without branches contained in the coordinate axes.

\medskip
\section{The Newton polygon of a plane curve singularity}  \label{sect:Newtonpolyg}
\medskip

In this section we relate the two viewpoints on local tropicalization explained in the introduction -- 
via weight vectors of parametrizations and toric geometry -- 
in the case of plane curve singularities $Y \hookrightarrow (\mathbb{C}^2,0)$ defined by series 
$f(x,y) \in \mathbb{C}[[x,y]]$ which are divisible neither by $x$ nor by $y$. 
The object allowing to make this connection precise is the {\em Newton polygon} $\mathcal{N}(f)$ 
of $f$ (see Definition \ref{def:NP}, Corollary \ref{cor:arcweightplanecurve} and Proposition \ref{prop:torcharltroptwod}). 
We end the section by introducing a third viewpoint on the local tropicalization, 
using the {\em initial forms} $\mathrm{in}_{(p,q)} f$ of $f$ (see Corollary \ref{cor:reforminit}).

\medskip

Consider a {\bf plane curve singularity} \index{Singularity!plane curve} $Y \hookrightarrow (\mathbb{C}^2, 0)$, which for us means a reduced formal germ  
of dimension one. Since the local ring  $\mathbb{C}[[x,y]]$ is factorial, 
we know that $Y$ is a principal divisor, 
that is, there exists $f(x,y) \in \mathbb{C}[[x,y]] \smallsetminus \{0\}$ satisfying $f(0, 0) = 0$, 
such that $Y = Z(f)$.  The series $f(x,y)$ is well-defined 
up to multiplication by a unit of $\mathbb{C}[[x,y]]$, that is, by a series with non-zero constant term. 
The germ $Y$ consists of a finite number of {\bf branches}, \index{Branch} 
that is, irreducible germs of curves. 

We assume in the sequel that {\em no branch of} $Y$ {\em is contained in the union of 
coordinate axes $Z(x)$ and $Z(y)$ of} $\mathbb{C}^2$, which amounts to say that $f$ is not divisible 
by $x$ or $y$. Note that in terms of Definition \ref{def:subtorgerm} below, this means that $Y$ 
is an {\em internal subtoric germ} of $(\mathbb{C}^2,0)$.

A basic property of plane {\em branches} $B$ is that their normalizations are {\em smooth}, 
which means that they are described by parametrizations $t \mapsto (x(t), y(t))$, with $x(t), y(t) \in \mathbb{C}[[t]]$. Special kinds of such normalizing parametrizations may be obtained by the classical iterative
method of Newton (see \cite[pages 59-60]{G 17} or \cite[Section 1.2.6]{GGP 20}). They satisfy the property
that $x(t) = t^{n}$, where $n \in \mathbb{Z}_{>0}$ is the intersection number at $0$ of $B$ with the branch
$(Z(x),0)$. Each step of Newton's method uses a suitable {\em Newton polygon}. 

Let us explain how Newton polygons pop up naturally in this context. Instead of looking for 
{\em Newton-Puiseux parametrizations} of the form $t \mapsto (t^n, y(t))$, 
we will consider an arbitrary parametrization $t \mapsto (x(t), y(t))$ of a branch of $Y = Z(f)$ 
and we will look for constraints on its {\em initial terms}.  As no branch of $Y$ is contained 
in the coordinate axes of $\mathbb{C}^2$, both series $x(t)$ and $y(t)$ are non-zero. We have the relation 
\begin{equation}\label{eq:idzero}
        f(x(t),y(t)) = 0
\end{equation}
in the ring $\mathbb{C}[[t]]$. Let us write
     \begin{equation}\label{eq:expofxy}
          x(t)=\sum_{i= p}^{\infty} \boxed{x_i} t^i, \quad y(t)=\sum_{j= q}^{\infty} \boxed{y_j} t^j, 
               \quad x_i,y_j\in \mathbb{C},
      \end{equation}
where $p>0$, $q>0$, $x_p\ne 0$, $y_q\ne 0$. Therefore $x_p t^p$ and $y_q t^q$ are the {\bf initial terms} \index{Initial!term} of $x(t)$ and $y(t)$.

\begin{definition}  \label{def:supptwod}
    The {\bf support} \index{Support!of a series} $\boxed{\mathrm{supp}\ f}$ of $f \in 
    \mathbb{C}[[x,y]] \smallsetminus \{0\}$ 
     is the set of exponents $(\alpha,\beta)\in \mathbb{Z}_{\geq 0}^{2}$ such that the
    coefficient $f_{\alpha,\beta}$ of the monomial $x^\alpha y^\beta$ in the expansion of $f$ does not vanish. With this notation we can write
     \begin{equation}\label{eq:expoff}
               f(x,y)=\sum_{(\alpha,\beta) \in \  \mathrm{supp} \ f} \boxed{f_{\alpha,\beta}}  \, 
                x^\alpha y^\beta, \quad f_{\alpha,\beta}  \in \mathbb{C}^* 
                \ \mbox{ for all } \ (\alpha,\beta) \in  \mathrm{supp}\  f.
      \end{equation}
\end{definition}
Substituting  \eqref{eq:expofxy} into \eqref{eq:expoff} and rearranging the terms, 
the identity \eqref{eq:idzero} becomes 
       \begin{equation}   \label{eq:rearr}
             \left( \sum_{(\alpha,\beta)\in \  \delta(p,q)} f_{\alpha, \beta} \  x_{p}^{\alpha} y_{q}^{\beta} \right) 
                   \cdot t^{m(p,q)} +   \sum_{m> m(p,q)} d_m t^m = 0,
        \end{equation}
for some coefficients $d_m\in \mathbb{C}$.   The set $\delta(p,q) \subseteq \mathrm{supp}\  f$ and the positive integer $m(p,q)$ appearing in \eqref{eq:rearr} are defined as follows, recalling 
that $\sigma_0$ denotes the non-negative quadrant of $\mathbb{R}^2$: 

\begin{definition}    \label{def:minlinform}
      Consider $(p,q) \in \sigma_0^{\circ} \cap \mathbb{Z}^2 = \mathbb{Z}_{>0}^2$ 
      and $f \in \mathbb{C}[[x,y]] \smallsetminus \{0\}$.
     The {\bf basis  $\boxed{\delta_{(p,q)}(f)}$ relative to $(p,q)$} of the support
     $\mathrm{supp}\  f$ of $f$  \index{Basis!of a support}
     is the subset of $\mathrm{supp}\  f$ on which the linear function 
         \[  \begin{array}{cccc}
                 \boxed{(p,q)} : &   \mathbb{R}^2 &   \to  & \mathbb{R}, \   \\
                   &   (\alpha,\beta) &   \mapsto &   p \alpha+ q \beta 
              \end{array}
         \]
      achieves its minimum $\boxed{m(p,q)} \in \mathbb{Z}_{\geq 0}$ when restricted to $\mathrm{supp}\  f$. 
 \end{definition}

 As $(\alpha, \beta)$ is the {\em vector of exponents} of the monomial $x^{\alpha} y^{\beta}$, 
 the pair $(p,q)$ may be interpreted as a {\em weight vector} relative to the variables $(x,y)$. We chose the name of {\em basis of $\mathrm{supp}\  f$} for $\delta_{(p,q)}(f)$ as we think about it as the basis on which $\mathrm{supp}\  f$ lies when it is studied using the {\em height} function $(p,q) : \mathbb{R}^2 \to \mathbb{R}$.
 
 The following classical result explains how convex geometry introduces itself at this precise moment:
 
 \begin{proposition}   \label{prop:congeomapp}
      Let $F$ be a finite subset of the real vector space $V$ and let $\ell : V \to \mathbb{R}$ be a 
      linear form. Then, the minimum of the restriction of $\ell$ to $F$ is achieved 
      precisely on the intersection of $F$ with a face of its own convex hull. Moreover, all the faces 
      of the convex hull of $F$ appear when $\ell$ varies in the dual vector space of $V$. 
 \end{proposition}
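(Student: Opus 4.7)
The plan is to prove the proposition by reducing both statements to the classical fact that the faces of a polytope are precisely the sets of minimizers of linear functionals.

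First, I would set $P := \mathrm{conv}(F) \subseteq V$ and observe that since $F$ is finite, $P$ is a polytope whose vertices all lie in $F$. Because a linear form attains its extrema on a compact convex set at extreme points, we get
\[
   \min_{x \in F} \ell(x) = \min_{x \in P} \ell(x) =: m.
\]
Setting $H := \{x \in V : \ell(x) = m\}$, the affine hyperplane $H$ is a supporting hyperplane of $P$ (the whole polytope lies in the closed halfspace $\ell \geq m$), so by the standard definition $\Phi := P \cap H$ is a face of $P$. The set of points of $F$ on which $\ell|_F$ achieves its minimum $m$ is then $F \cap \Phi$, which proves the first assertion. In the degenerate case $\ell \equiv 0$, one gets $\Phi = P$, which is the improper face.

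For the second assertion, I would invoke the standard characterization: every face $\Phi$ of the polytope $P$ is exhibited as $P \cap H_\Phi$ for some supporting hyperplane $H_\Phi = \{x : \ell_\Phi(x) = c\}$ with $P \subseteq \{\ell_\Phi \geq c\}$. For a proper face, this is a direct consequence of the separation theorem applied to the relative interior of $\Phi$ and to $P$ (combined with the fact that a polytope has only finitely many faces, each cut out by a supporting affine hyperplane); for the improper face $\Phi = P$, take $\ell_\Phi = 0$. The corresponding linear form $\ell_\Phi$ then achieves its minimum on $P$, and hence on $F$, exactly on $\Phi$ and on $F \cap \Phi$ respectively.

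I do not expect a genuine obstacle: the result is essentially a restatement of the linear-programming principle that linear functionals select faces of polytopes. The only mild care needed is bookkeeping around the two boundary cases, namely the zero functional (which selects the improper face $P$ itself) and the empty face, which is excluded here because $F$ is nonempty whenever we are in the situation of interest. If one prefers a completely self-contained argument instead of quoting the separation theorem, one may pick a point $v$ in the relative interior of $\Phi$, project $P$ orthogonally to a complement of the affine hull of $\Phi$, and take $\ell_\Phi$ to be an affine form vanishing on $\mathrm{aff}\,\Phi$ and nonnegative on the (finite) image of the vertices of $P$ not lying in $\Phi$.
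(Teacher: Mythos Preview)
Your argument is correct and is the standard one: reduce to the polytope $P=\mathrm{conv}(F)$, identify the minimizing locus of $\ell$ on $P$ with a face via a supporting hyperplane, and conversely obtain every face from a suitable linear form. The only thing to compare is that the paper does \emph{not} actually prove this proposition: it states it as a classical fact of convex geometry and simply refers the reader to the Appendix of Oda's book for background. So there is no ``paper's own proof'' to match against; your write-up would supply a proof where the paper chose to omit one.
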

 
For a summary of facts from convex geometry used in the theory of toric varieties see Oda's 
\cite[Appendix]{O 88}.
 
In our context, we have to apply Proposition \ref{prop:congeomapp} to $F := \mathrm{supp}\  f$.
As $f$ is a series, its support $\mathrm{supp}\  f$ may be {\em infinite}. Nevertheless, 
it is easy to see that if $p >0, q>0$, that is, if $(p,q)$ varies in the interior $\sigma_0^{\circ}$ 
of the non-negative quadrant $\sigma_0$ of the plane $\mathbb{R}^2$ of weight vectors, 
then the restriction of the linear form $(p,q)$ of Definition \ref{def:minlinform} to 
 $\mathrm{supp}\  f$ achieves its minimum on a {\em finite} set. Indeed, the possible such sets 
 are the intersections of $\mathrm{supp}\  f$ with 
 the {\em compact} faces of a variant of the convex hull of $\mathrm{supp}\  f$, which is called 
 the {\em Newton polygon} of $f$. This polygon is defined as follows:

\begin{definition}\label{def:NP}
   The  {\bf Newton polygon $\boxed{\mathcal{N}(f)}$  of the series}  \index{Newton!polygon}
   $f \in \mathbb{C} [[x,y]]  \smallsetminus \{0\}$ is 
     the convex hull in $\mathbb{R}^2$ of the set
       \[ \mathrm{supp}\  f + \mathbb{R}_{\geq 0}^{2}  \ = \ \bigcup_{(\alpha,\beta)\in \ \mathrm{supp}\  f} \big( (\alpha,\beta)+\mathbb{R}_{\geq 0}^{2} \big). \] 
\end{definition}

 The sum of the left-hand side of the previous equality is the so-called {\bf Minkowski sum} 
 \index{Minkowski sum} of subsets of a real vector space, defined by:
    \[ A + B := \{ a + b, \  a \in A, b \in B \}.   \]

Our assumption that $f$ is not divisible by $x$ or $y$ means that its Newton polygon $\mathcal{N}(f)$ 
intersects both axes of the plane of exponents of monomials. 
Its compact faces are either {\em vertices} or {\em edges}.

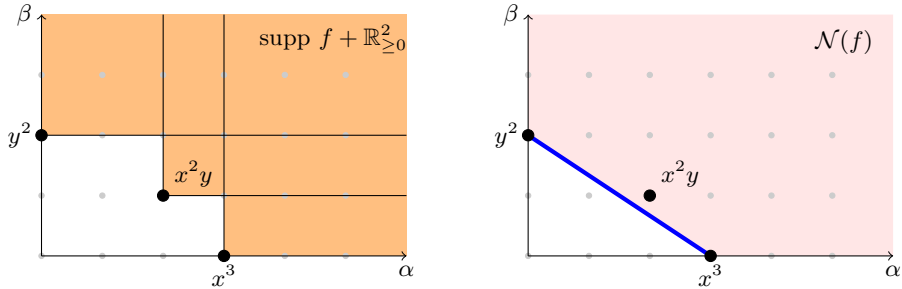
\begin{figure}[h!] 
\begin{center}
\begin{tikzpicture}[scale=0.8]

\begin{scope}[shift={(0,0)}]

\tikzstyle{every node}=[font=\small]

    \fill[fill=orange!50!white] (0,4) --(6,4)-- (6,2) -- (0,2) --cycle;
     \fill[fill=orange!50!white] (2,1) --(2,4)-- (6,4) -- (6,1) --cycle;
      \fill[fill=orange!50!white] (3,0) --(3,4)-- (6,4) -- (6,0) --cycle;
\foreach \x in {0,1,...,5}{
\foreach \y in {0,1,...,3}{
       \node[draw,circle,inner sep=0.7pt,fill, color=gray!40] at (1*\x,1*\y) {}; }
   }
\draw[->] (0,0) -- (6,0) node[right,below] {$\alpha$};
\draw[->] (0,0) -- (0,4) node[above,left] {$\beta$};

\draw[-] (3,4) -- (3,0)  ;
\draw[-] (2,4) -- (2,1) -- (6,1) ;
\draw[-] (0,2) -- (6,2)  ;

\node[draw,circle,inner sep=1.5pt,fill=black, color=black] at (3,0) {};
\node[draw,circle,inner sep=1.5pt,fill=black,color=black] at (0,2) {};
\node[draw,circle,inner sep=1.5pt,fill] at (2,1) {};
\node [below] at (3,0) {$x^{3}$};
\node [left] at (0,2) {$y^{2}$};
\node [above] at (2.5, 1) {$x^2 y$};
\node [above] at (4.8, 3.2) {$\mathrm{supp}\  f +\mathbb{R}_{\geq 0}^{2}$};

\end{scope}

%%%%%%%%%%%%%%%%%%%%%

\begin{scope}[shift={(8,0)}]

\tikzstyle{every node}=[font=\small]
\fill[fill=pink!40!white] (0,4) --(6,4)-- (6,0) -- (3, 0) -- (0,2) --cycle;
 
\foreach \x in {0,1,...,5}{
\foreach \y in {0,1,...,3}{
       \node[draw,circle,inner sep=0.7pt,fill, color=gray!40] at (1*\x,1*\y) {}; }
   }
\draw[->] (0,0) -- (6,0) node[right,below] {$\alpha$};
\draw[->] (0,0) -- (0,4) node[above,left] {$\beta$};
\draw [ultra thick, color=blue](3,0) -- (0,2);
\node[draw,circle,inner sep=1.5pt,fill=black, color=black] at (3,0) {};
\node[draw,circle,inner sep=1.5pt,fill=black,color=black] at (0,2) {};
\node[draw,circle,inner sep=1.5pt,fill] at (2,1) {};
\node [below] at (3,0) {$x^{3}$};
\node [left] at (0,2) {$y^{2}$};
\node [above] at (2.5, 1) {$x^2 y$};
\node [above] at (5.2, 3.2) {$\mathcal{N}(f)$};

\end{scope}

 \end{tikzpicture}
\end{center}
\caption{The Minkowski sum $\mathrm{supp}\  f +\mathbb{R}_{\geq 0}^{2}$ of Definition \ref{def:NP}
     (on the left)
    and the Newton polygon $\mathcal{N}(f)$ of the series $f(x,y)=y^2 - 2 x^3 + x^2 y$ 
    (on the right).}  
   \label{fig:NPcubicex}
    \end{figure}

\begin{example}  \label{ex:NPcubic}
     Let us consider again the series $f(x,y) := y^2 - 2 x^3 + x^2 y$ of formula  \eqref{eq:defcubic}. 
     Figure \ref{fig:NPcubicex} shows the Minkowski sum 
     $\mathrm{supp}\  f +\mathbb{R}_{\geq 0}^{2}$  of Definition
     \ref{def:NP} and its convex hull, which is by definition the Newton polygon $\mathcal{N}(f)$. 
  \end{example}

     \begin{remark} 
     Assume that $f \in \mathbb{C}[x,y]$, as in Example \ref{ex:NPcubic}.
     If instead of weight vectors with positive coordinates we had considered arbitrary 
     non-zero vectors $(p,q)$ of the weight plane then, seen as linear forms on the plane 
     of exponents of monomials, they would have achieved their minima on the 
     intersections of $\mathrm{supp}\  f$ with the faces of the {\em convex hull of this support}. 
     This convex hull -- a triangle in Example \ref{ex:NPcubic} -- is by definition the 
     {\em global Newton polygon} of the polynomial $f(x,y)$, which gives information 
     about the parametric curves contained in $Z(f)$ which are described by 
     {\em Laurent power series} (with possibly negative exponents) instead of 
     power series with non-negative exponents. In this paper we are concerned only 
     with parametric curves {\em passing through the origin of $\mathbb{C}^2$ or, 
     more generally, of $\mathbb{C}^n$}. 
     That is why we define the Newton polygon of $f$ as in Definition \ref{def:NP}. In order 
     to differentiate it from the convex hull of the support, one may call it the 
     {\em local Newton polygon}. Note that the term \emph{Newton diagram} \index{Newton!diagram}
     is also met in the literature, sometimes meaning the strict subset of the local 
     Newton polygon which is the union of its compact faces (see for instance \cite{V 76}). 
     \end{remark}

The map associating to each non-zero series its Newton polygon behaves like the 
logarithm function, in the sense that it transforms products into Minkowski sums (see \cite[Proposition 1.6.13]{GGP 20}):

\begin{proposition} \label{prop:NPMinksum}
     Assume that $f,g \in \mathbb{C}[[x,y]] \smallsetminus \{0\}$. Then:
         \[   \mathcal{N}(fg) = \mathcal{N}(f) + \mathcal{N}(g). \]
\end{proposition}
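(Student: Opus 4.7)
For the inclusion $\mathcal{N}(fg) \subseteq \mathcal{N}(f) + \mathcal{N}(g)$, I would first observe that every monomial of $fg$ is obtained by collecting products of monomials of $f$ with monomials of $g$, so $\mathrm{supp}(fg) \subseteq \mathrm{supp}(f) + \mathrm{supp}(g)$. The Minkowski sum $\mathcal{N}(f) + \mathcal{N}(g)$ is convex (as a Minkowski sum of convex sets) and contains
\[ \mathrm{supp}(f) + \mathrm{supp}(g) + \mathbb{R}_{\geq 0}^{2} \ \supseteq \ \mathrm{supp}(fg) + \mathbb{R}_{\geq 0}^{2}, \]
so it contains the convex hull of the right-hand side, which is $\mathcal{N}(fg)$.

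For the reverse inclusion, I would exploit that both $\mathcal{N}(fg)$ and $\mathcal{N}(f) + \mathcal{N}(g)$ are closed convex subsets of $\sigma_0 = \mathbb{R}_{\geq 0}^{2}$ whose recession cone is exactly $\sigma_0$. Any such set $C$ is recovered from the restriction to $(p,q) \in \sigma_0^{\circ}$ of its support function $h_C(p,q) := \inf_{(\alpha,\beta) \in C} (p\alpha + q\beta)$, because $C$ equals the intersection of the half-planes $\{(\alpha,\beta) : p\alpha + q\beta \geq h_C(p,q)\}$ with $\sigma_0$. For a non-zero series $h \in \mathbb{C}[[x,y]]$, write $m_h(p,q)$ for the quantity $m(p,q)$ of Definition \ref{def:minlinform}; the finiteness observation recorded just before Definition \ref{def:NP} implies that $h_{\mathcal{N}(h)}(p,q) = m_h(p,q)$, with this infimum attained on the finite subset $\delta_{(p,q)}(h)$ of $\mathrm{supp}(h)$. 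The proposition thus reduces to the pointwise identity $m_{fg}(p,q) = m_f(p,q) + m_g(p,q)$ for every $(p,q) \in \sigma_0^{\circ}$.

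To establish that identity, I would split $f = f_{\min} + f_{>}$ and $g = g_{\min} + g_{>}$, where $f_{\min}$ and $g_{\min}$ are the finite polynomial parts supported on $\delta_{(p,q)}(f)$ and $\delta_{(p,q)}(g)$, while every monomial of $f_{>}$ and $g_{>}$ has strictly greater $(p,q)$-weight. Expanding the product yields $fg = f_{\min} g_{\min} + R$, where every monomial of $R$ has $(p,q)$-weight strictly larger than $m_f(p,q) + m_g(p,q)$. Since $\mathbb{C}[x,y]$ is an integral domain, $f_{\min} g_{\min}$ is a nonzero polynomial all of whose monomials have $(p,q)$-weight equal to $m_f(p,q) + m_g(p,q)$, and none of them can be cancelled by the higher-weight terms of $R$. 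This yields the desired identity, together with the stronger multiplicativity $\mathrm{in}_{(p,q)}(fg) = \mathrm{in}_{(p,q)}(f) \cdot \mathrm{in}_{(p,q)}(g)$ of initial forms anticipated by Corollary \ref{cor:reforminit}.

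The main obstacle is conceptual rather than computational: one must carefully articulate the principle that a closed convex subset of $\sigma_0$ with recession cone $\sigma_0$ is determined by the restriction of its support function to $\sigma_0^{\circ}$, and one must check that the infima involved are genuinely attained despite the possibly infinite supports of formal power series. Once these two points are secured, the remaining argument turns entirely on the fact that $\mathbb{C}[x,y]$ has no zero divisors.
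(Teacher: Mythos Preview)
Your argument is correct. The paper itself does not prove Proposition~\ref{prop:NPMinksum}; it merely states the result and refers the reader to \cite[Proposition~1.6.13]{GGP 20}. Your proposal therefore supplies a self-contained proof where the paper gives none.

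The strategy you chose---reducing the equality of the two convex bodies to the additivity $m_{fg}(p,q)=m_f(p,q)+m_g(p,q)$ of the support functions on $\sigma_0^{\circ}$, and then deriving that additivity from the integrality of $\mathbb{C}[x,y]$ via the decomposition $f=f_{\min}+f_{>}$---is the standard one and is essentially how the cited reference proceeds. A couple of minor remarks: (i) to invoke the determination of $C$ by its support function you implicitly use that $\mathcal{N}(f)+\mathcal{N}(g)$ is \emph{closed}; this holds because each Newton polygon is in fact a rational polyhedron (only finitely many lattice points of $\mathrm{supp}(f)$ can lie on the compact boundary, since that boundary is monotone), and the Minkowski sum of polyhedra is a polyhedron; (ii) your boundary argument---that constraints coming from $(p,q)\in\partial\sigma_0$ are recovered as limits of interior constraints---works because $h_C(1,\varepsilon)\geq h_C(1,0)$ for $C\subseteq\sigma_0$ and $\varepsilon\geq 0$, so no continuity subtlety arises. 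With these two points made explicit, the proof is complete, and the bonus identity $\mathrm{in}_{(p,q)}(fg)=\mathrm{in}_{(p,q)}(f)\cdot\mathrm{in}_{(p,q)}(g)$ is indeed the conceptual heart of the matter.
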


    Proposition \ref{prop:NPMinksum} implies that $ \mathcal{N}(fg) = \mathcal{N}(f)$ whenever $g$ is a unit of $\mathbb{C}[[x,y]]$. Therefore the Newton polygon does not depend of the defining series of a given plane curve singularity $Y \hookrightarrow \mathbb{C}^2$. This allows to simply speak of the {\bf Newton polygon  \index{Newton!polygon} $\boxed{\mathcal{N}(Y)}$ of $Y$}. Even if we drop it for simplicity from the notation, it is not an invariant of the abstract germ $Y$, since it depends strongly on the chosen embedding of $Y$ in $(\mathbb{C}^2,0)$ (see \cite{O 83} or \cite{GLP 07}).

Let us come back to the equality \eqref{eq:rearr}. For this equality to be possible, it is necessary that
{\em the coefficient} 
   \[  \boxed{d_{m(p,q)}} :=  \sum_{(\alpha,\beta)\in \ \delta(p,q)} f_{\alpha, \beta} \  x_{p}^{\alpha} y_{q}^{\beta}  \] 
{\em of $t^{m(p,q)}$ contains at least two summands}. That is, {\em the face $\delta(p,q)$ must be a compact edge of $\mathcal{N}(f)$ and not a vertex}. If this is the case, then, since $\mathbb{C}$ is algebraically closed, it is indeed possible
to choose the initial coefficients in the series $x(t)$ and $y(t)$ so that $d_{m(p,q)} =0$. 
Moreover, as shown for instance in \cite[Section IV.3.2]{W 50} or
\cite[Section~2.1]{C 04}, one can organize in this way an iterative process which 
computes parametrizations of all the branches of $Z(f)$. 

To say that $(p,q) \in \sigma^{\circ}_0$ {\em achieves its minimum} on the edge $\delta(p,q)$ of 
$\mathcal{N}(f)$ means that the ray $\mathbb{R}_{\geq 0} (p,q)$ is {\em orthogonal} to  $\delta(p,q)$. 
We may thus summarize the previous explanations as follows: 

\begin{theorem}\label{thm:pqpair}
     For a given plane curve singularity $Y \hookrightarrow (\mathbb{C}^2,0)$ without branches contained 
     in the coordinate axes, there exists only a finite number of possibilities for
     the quotient $q/p$ of initial exponents $p$ and $q$ of a parametrized  
     arc contained in $Y$. Namely, those quotients are exactly the slopes 
     of the rays orthogonal to the compact edges of the Newton polygon $\mathcal{N}(Y)$ of $Y$.
\end{theorem}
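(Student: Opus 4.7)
The statement has two halves: a necessary condition (every admissible slope $q/p$ is orthogonal to a compact edge) and an existence statement (every compact edge actually arises). My plan is to extract the necessary condition directly from the identity \eqref{eq:rearr}, and to handle existence by running one step of the classical Newton iteration already mentioned before the statement.

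\medskip
\textbf{Necessity.} Given a parametrized arc $t \mapsto (x(t), y(t))$ contained in $Y$ with $x_p,y_q\in \mathbb{C}^*$, I substitute the expansions \eqref{eq:expofxy} into \eqref{eq:expoff} and collect powers of $t$, which yields the identity \eqref{eq:rearr}. Since $p,q>0$, Definition \ref{def:minlinform} applied to $\mathrm{supp}\, f$ gives a well-defined minimum $m(p,q)$ achieved on a \emph{finite} subset $\delta_{(p,q)}(f)$; by Proposition \ref{prop:congeomapp} this subset equals the intersection of $\mathrm{supp}\, f$ with a face of $\mathcal{N}(f)$, and that face is compact because $(p,q)\in \sigma_0^\circ$. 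The identity $f(x(t),y(t))=0$ forces the coefficient $d_{m(p,q)}$ of $t^{m(p,q)}$ in \eqref{eq:rearr} to vanish. Since all $f_{\alpha,\beta}$ are nonzero by \eqref{eq:expoff} and $x_p,y_q\in \mathbb{C}^*$, every summand $f_{\alpha,\beta}x_p^\alpha y_q^\beta$ is nonzero, so the sum can vanish only if $\delta_{(p,q)}(f)$ contains at least two lattice points. Hence the corresponding face of $\mathcal{N}(f)$ is an \emph{edge} $\delta$, not a vertex. By definition of the edge on which a linear functional achieves its minimum, $(p,q)$ lies in the inner normal ray to $\delta$, so $\mathbb{R}_{\geq 0}(p,q)$ is orthogonal to $\delta$ and the slope $q/p$ is determined by $\delta$. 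Since $\mathcal{N}(Y)=\mathcal{N}(f)$ has only finitely many compact edges, only finitely many slopes occur.

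\medskip
\textbf{Existence.} Conversely, fix a compact edge $\delta$ of $\mathcal{N}(f)$ and let $(p,q)\in \mathbb{Z}_{>0}^2$ be a primitive weight vector in its inner normal ray, so that $\delta_{(p,q)}(f)=\mathrm{supp}\, f \cap \delta$. To produce an arc in $Y$ with initial exponents $(p,q)$, I first solve the \emph{edge equation}
\[
\sum_{(\alpha,\beta)\in \delta_{(p,q)}(f)} f_{\alpha,\beta}\, x_p^\alpha y_q^\beta = 0
\]
for a pair $(x_p,y_q)\in (\mathbb{C}^*)^2$. Since $\delta$ has at least two vertices, this is a nontrivial quasi-homogeneous polynomial relation in two unknowns; dividing by a monomial and specializing $x_p$ or $y_q$ to $1$ reduces it to a nonzero univariate polynomial whose nonzero roots exist by algebraic closure of $\mathbb{C}$. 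Once a valid initial pair $(x_p,y_q)$ is chosen, the classical Newton iteration (see \cite[Section IV.3.2]{W 50} or \cite[Section~2.1]{C 04}, as cited just above the statement) produces a full parametrization $(x(t),y(t))\in (t\mathbb{C}[[t]])^2$ of a branch of $Z(f)$ extending these initial terms. This branch is not contained in the coordinate axes because both $x(t)$ and $y(t)$ are nonzero.

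\medskip
\textbf{Main obstacle.} The necessity part is essentially a bookkeeping of \eqref{eq:rearr} combined with Proposition \ref{prop:congeomapp}, so it is routine once one has identified which face of $\mathcal{N}(f)$ governs $m(p,q)$. The delicate point is the existence direction, specifically verifying that for an arbitrary compact edge the edge equation has a root with \emph{both} coordinates nonzero and that the Newton iteration indeed produces a convergent formal parametrization; both are classical, and citing the Newton--Puiseux procedure is the cleanest way to close the argument without re-deriving it here.
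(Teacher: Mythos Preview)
Your proof is correct and follows essentially the same route as the paper: the paper's ``proof'' is precisely the discussion preceding the theorem, deriving necessity from the vanishing of $d_{m(p,q)}$ in \eqref{eq:rearr} and existence from algebraic closure plus the Newton--Puiseux iteration of \cite{W 50, C 04}, and your write-up simply makes these steps explicit. One minor caveat on wording: in the existence half you take $(p,q)$ primitive and say the Newton iteration produces a parametrization ``extending these initial terms,'' but as Remark~\ref{rem:notprim} shows the resulting arc may have initial exponents $(kp,kq)$ with $k>1$; this does not affect your conclusion since only the slope $q/p$ is at stake.
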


\begin{remark}   \label{rem:notprim}
     It is not always possible to choose the pair $(p,q)$ of Theorem \ref{thm:pqpair} 
      to be primitive, that is, such that 
     $p$ and $q$ are coprime.  For instance, let $f(x,y) \in \mathbb{C}[[x,y]]$ be the minimal 
     polynomial in $\mathbb{C}[[x]][y]$ of the Newton-Puiseux series $x^{3/2} + x^{7/4}$. Therefore, 
     $(x(t) := t^4, \  y(t) := t^6 + t^7)$ is a parametrization 
     of the branch $Y \hookrightarrow \mathbb{C}^2$ defined by $f$. 
     The weight vector $(4, 6)$ of initial exponents of the series $(x(t), y(t))$
     is not primitive and $(2,3)$ is the unique primitive weight vector with positive 
     coordinates which is proportional to it. Assume that a pair 
     $(x_1(t), y_1(t)) \in \mathbb{C}[[t]]$ has $(2, 3)$ as weight vector of initial exponents. 
     Then, one may perform a change of 
     variable, replacing $t$ by $u \in \mathbb{C}[[t]]$ such that $x_1(t) = u^2$. 
     The series $y_1(t)$ becomes a 
     series $y_2(u) \in \mathbb{C}[[u]]$. The pair $(u^2, y_2(u))$ parametrizes the same branch 
     $B \hookrightarrow \mathbb{C}^2$ as $(x_1(t), y_1(t))$ therefore, as $2$ is a prime number,  
     this branch has at most one {\em characteristic exponent} (recall that the {\em characteristic 
     exponents of a Newton-Puiseux series} are those exponents of its support which cannot be 
     written as a fraction whose denominator is the lowest common denominator of the exponents 
     strictly lower than them, see \cite[Definition 1.6.2]{GGP 20}). 
     This shows that $B$ cannot be equal to the branch $Y$, which has 
     two characteristic exponents, namely $3/2$ and $7/4$. 
\end{remark}

Using the first definition of the {\em local tropicalization} of $Y \hookrightarrow (\mathbb{C}^2,0)$ given in the introduction, in terms of exponent vectors of arcs contained in  $Y$, Theorem \ref{thm:pqpair}  implies:

\begin{corollary}   \label{cor:arcweightplanecurve}
    Let $Y \hookrightarrow (\mathbb{C}^2,0)$ be a plane curve singularity without branches contained 
     in the coordinate axes. Then its local tropicalization is the union of the rays orthogonal 
     to the compact edges of the Newton polygon $\mathcal{N}(Y)$ of $Y$. 
\end{corollary}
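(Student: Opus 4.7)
The plan is to derive the corollary essentially directly from Theorem \ref{thm:pqpair}, checking only that the topological closure in the definition of local tropicalization adds nothing because we are dealing with a finite union of rays.

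Let $f \in \mathbb{C}[[x,y]]$ be a defining series of $Y$, divisible by neither $x$ nor $y$ by hypothesis. Then $\mathcal{N}(f)$ meets both coordinate axes, and its compact edges form a polygonal path from some point $(a,0)$ to some point $(0,b)$ with strictly negative slopes. Consequently, each compact edge is orthogonal to a unique ray $\mathbb{R}_{\geq 0}(p,q)$ contained in the open first quadrant, and a ray in the open first quadrant is determined by its slope $q/p$. For the forward inclusion, I would take any arc $(x(t),y(t))$ in $Y$ with no branch in $Z(xy)$ and initial exponent vector $(p,q) \in \mathbb{Z}_{>0}^{2}$. Theorem \ref{thm:pqpair} forces $q/p$ to be the slope of a ray orthogonal to some compact edge of $\mathcal{N}(Y)$, hence $\mathbb{R}_{\geq 0}(p,q)$ coincides with that orthogonal ray. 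For the reverse inclusion, Theorem \ref{thm:pqpair} asserts moreover that \emph{every} such slope is realized by the initial exponents of a parametrization of some branch of $Y$; so every ray orthogonal to a compact edge occurs as one of the rays generated by initial exponent vectors of arcs.

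Since only finitely many slopes are produced, the union of realized rays is already closed in $\mathbb{R}^{2}$, and the topological closure in the definition of local tropicalization adds nothing. The single nontrivial input is Theorem \ref{thm:pqpair}. Its forward half is an immediate consequence of convex geometry via Proposition \ref{prop:congeomapp} applied to $\mathrm{supp}\, f$ and the linear form $(p,q)$: if the coefficient of the minimal power of $t$ in \eqref{eq:rearr} is to vanish, the basis $\delta(p,q)$ must contain at least two points of $\mathrm{supp}\, f$, hence be a compact edge. The main obstacle is rather the \emph{existence} part of Theorem \ref{thm:pqpair}: given a compact edge $\delta$ and a weight vector $(p,q) \in \mathbb{Z}_{>0}^{2}$ orthogonal to it, one must actually construct an arc in $Y \smallsetminus Z(xy)$ with initial exponent vector proportional to $(p,q)$. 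This is where the algebraic closedness of $\mathbb{C}$ intervenes: one chooses nonzero leading coefficients $x_p, y_q$ as a nonzero root of the quasi-homogeneous polynomial $\sum_{(\alpha,\beta) \in \delta \cap \mathrm{supp}\, f} f_{\alpha,\beta}\, x_p^{\alpha} y_q^{\beta}$ (which is nonconstant because $\delta$ has at least two such lattice points), and then runs the classical Newton-Puiseux iteration to determine the higher-order coefficients so that $f(x(t),y(t))=0$ holds in $\mathbb{C}[[t]]$.
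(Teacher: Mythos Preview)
Your proposal is correct and follows essentially the same approach as the paper: the corollary is stated there as an immediate consequence of Theorem~\ref{thm:pqpair}, and you derive it from that theorem as well, adding only the routine observation that a finite union of rays is already closed. Your additional sketch of why Theorem~\ref{thm:pqpair} holds (the convex-geometry argument for the forward direction and the Newton--Puiseux iteration for the existence direction) simply recapitulates the discussion the paper gives just before that theorem, so there is no genuine divergence in method.
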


 \begin{figure}[h!]
    \begin{center}
\begin{tikzpicture}[scale=1]

          \fill[fill=yellow!40!white] (0,0) --(3.5,0)-- (3.5,3.5) -- (0, 3.5) --cycle;
        \draw [-, color=black, thick] (0,0)--(3.5,0);
        \draw [-, color=black, thick] (0,0)--(0,3.5);
                     \draw [-, color=red, thick] (0,0)--(2.33,3.5);
       
   \foreach \x in {0,1,...,3}{
\foreach \y in {0,1,...,3}{
       \node[draw,circle,inner sep=0.7pt,fill, color=gray!40] at (1*\x,1*\y) {}; }
       
     \node[draw,circle, inner sep=1.5pt,color=red, fill=red] at (2,3){};
   }

\end{tikzpicture}
\end{center}
 \caption{The local tropicalization of the series $f(x,y)=y^2 - 2 x^3 + x^2 y$ 
    is the ray spanned by the primitive vector $(2,3)$, which is indicated with a filled dot.}
\label{fig:loctropcubic}
   \end{figure}
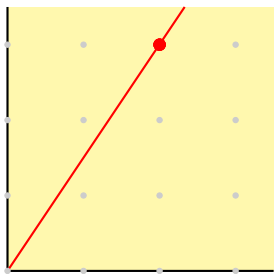

    The local tropicalization of the series of Example\ref{ex:NPcubic} 
    is represented in Figure \ref{fig:loctropcubic}. One recognizes the diagram of the bottom of 
   Figure \ref{fig:fansforcusp}. This is not a coincidence. Indeed, 
   one has the following property, which is illustrated in Figures \ref{fig:embrescusp}  
   and \ref{fig:fansforcusp}:
   
   \begin{proposition}   \label{prop:minfan}
        Let $Y \hookrightarrow (\mathbb{C}^2,0)$ be a plane curve singularity without branches contained 
        in the coordinate axes.  
        Let $\mathcal{F}$ be a fan with support contained in the non-negative quadrant $\sigma_0$ 
        of the weight plane $\mathbb{R}^2$. Consider the strict transform $Y_{\mathcal{F}}$ of $Y$ by the 
        toric morphism $\pi_{\mathcal{F}} : \mathcal{X}_{\mathcal{F}} \to \mathbb{C}^2 = 
        \mathcal{X}_{\sigma_0}$. Then,  $Y_{\mathcal{F}}$ 
        avoids the $0$-dimensional orbits of the toric surface $\mathcal{X}_{\mathcal{F}}$ if and only if 
        the rays orthogonal to the compact edges of the Newton polygon $\mathcal{N}(Y)$ of $Y$ 
        are cones of the fan $\mathcal{F}$. 
   \end{proposition}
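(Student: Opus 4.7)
The plan is to work $2$-dimensional cone by $2$-dimensional cone. Since point~\eqref{inclpresbij} of Section~\ref{sect:motivex} identifies the $0$-dimensional orbits of $\mathcal{X}_{\mathcal{F}}$ with the $2$-dimensional cones of $\mathcal{F}$ via $\tau \mapsto O_{\tau}$, the proposition reduces to the equivalence
\[
O_{\tau} \subset Y_{\mathcal{F}} \ \iff \ \tau^{\circ} \text{ contains a ray orthogonal to a compact edge of } \mathcal{N}(Y).
\]
Once this is proved, $Y_{\mathcal{F}}$ avoids all $0$-dimensional orbits iff no $\tau^{\circ}$ (with $\tau \in \mathcal{F}$, $\dim \tau = 2$) contains such a ray; since $|\mathcal{F}| \subseteq \sigma_0$, this is exactly the condition that every such ray lying in $|\mathcal{F}|$ appears as a $1$-dimensional cone of $\mathcal{F}$.

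For the ``$\Leftarrow$'' direction I would produce, via the Newton iteration behind Theorem~\ref{thm:pqpair}, an arc $\gamma(t) = (c_1 t^p + \cdots, c_2 t^q + \cdots)$ of $Y$ with $c_1, c_2 \in \mathbb{C}^*$ and $(p,q) \in \tau^{\circ}$ lying on a ray orthogonal to a compact edge of $\mathcal{N}(Y)$. Inside the dense torus of $\mathcal{X}_{\mathcal{F}}$ I can factor $\gamma(t) = (t^p, t^q) \cdot (c_1 + \cdots, c_2 + \cdots)$; by point~\eqref{weightvect1paramgp}, the first factor tends to the identity of $O_{\tau}$ (because $(p,q) \in \tau^{\circ}$) while the second converges in $(\mathbb{C}^*)^2$, so the lift of $\gamma$ meets $O_{\tau}$. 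For the ``$\Rightarrow$'' direction I would pick a formal branch of $Y_{\mathcal{F}}$ through a point of $O_{\tau}$, parametrize its normalization, and push forward through $\pi_{\mathcal{F}}$ to obtain a parametrization of a branch of $Y$; since $Y$ has no branches in the coordinate axes, the resulting arc has well-defined positive initial orders $p,q$, and point~\eqref{weightvect1paramgp} applied to the same factored form forces $(p,q) \in \tau^{\circ}$, whence Theorem~\ref{thm:pqpair} places $(p,q)$ on a ray orthogonal to a compact edge of $\mathcal{N}(Y)$.

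The step I expect to be most delicate is making rigorous the convergence of the lifted arc to $O_{\tau}$ under the factorization above: one needs to track how the torus action on $\mathcal{X}_{\mathcal{F}}$ extends continuously to the boundary orbits, which is essentially the content packaged into point~\eqref{weightvect1paramgp}. A cleaner algebraic alternative that avoids arcs altogether is to work directly on the chart $\mathcal{X}_{\tau} = \mathrm{Spec}\,\mathbb{C}[\tau^{\vee} \cap \mathbb{Z}^2]$: dividing an equation $f$ of $Y$ by its maximal monomial factor there produces a defining series of $Y_{\mathcal{F}} \cap \mathcal{X}_{\tau}$ whose value at the fixed point $O_{\tau}$ is
\[
\sum_{(\alpha,\beta) \in \delta_{w_1}(f) \cap \delta_{w_2}(f)} f_{\alpha, \beta},
\]
where $w_1, w_2$ are the primitive generators of $\tau$ and $\delta_w(f)$ is the face of $\mathrm{supp}(f)$ minimizing the linear form $w$ in the sense of Definition~\ref{def:minlinform}. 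Inspecting the normal fan of $\mathcal{N}(Y)$ shows that this intersection is a single vertex of $\mathrm{supp}(f)$ (giving a nonzero sum) when $\tau^{\circ}$ lies in one $2$-dimensional cone of the normal fan, and is empty (giving $0$) when $\tau^{\circ}$ crosses one of its rays --- that is, precisely when $\tau^{\circ}$ contains a ray orthogonal to a compact edge of $\mathcal{N}(Y)$.
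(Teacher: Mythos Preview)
Your approach is correct and takes a genuinely different route from the paper's. The paper first enlarges $\mathcal{F}$ to a fan $\mathcal{F}'$ with support $\sigma_0$, refines it by the Newton rays to obtain $\hat{\mathcal{F}}$, invokes an external result (\cite[Proposition~1.4.18]{GGP 20}) to locate $Y_{\hat{\mathcal{F}}}\cap\partial\mathcal{X}_{\hat{\mathcal{F}}}$ on the one-dimensional orbits corresponding to those rays, and then pushes this information down to $\mathcal{X}_{\mathcal{F}}$. You instead work directly on each two-dimensional cone $\tau\in\mathcal{F}$: both your arc argument (feeding Theorem~\ref{thm:pqpair} into the limit behaviour of one-parameter subgroups, point~\eqref{weightvect1paramgp}) and your algebraic chart computation establish the pointwise equivalence $O_\tau\in Y_{\mathcal{F}}\iff\tau^\circ$ contains a Newton ray, without any outside citation. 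The paper's route is shorter because the hard step is outsourced; yours is self-contained and makes the mechanism transparent---namely that the unique candidate point $\delta_{w_1}(f)\cap\delta_{w_2}(f)$ falls outside $\mathcal{N}(f)$ (hence outside $\mathrm{supp}\,f$) precisely when $w_1,w_2$ lie on opposite sides of a Newton ray. One small remark: your reduction correctly lands on ``every Newton ray \emph{lying in $|\mathcal{F}|$} is a cone of $\mathcal{F}$'', which is in fact exactly what the paper's argument establishes as well; the proposition's literal wording drops this qualifier, but the two conditions coincide whenever $|\mathcal{F}|$ contains all the Newton rays, which is the only regime used downstream (Proposition~\ref{prop:torcharltroptwod}).
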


\begin{proof}
    We add cones to the fan $\mathcal{F}$ until we obtain a fan $\mathcal{F}'$ 
    whose support is $\sigma_0$. Then, we
    subdivide the two-dimensional cones of $\mathcal{F}'$ using the rays orthogonal to the compact edges 
    of the Newton polygon $\mathcal{N}(Y)$ of $Y$, 
    obtaining a fan $\hat{\mathcal{F}}$. By \cite[Proposition 1.4.18]{GGP 20}, 
    the strict transform $Y_{\hat{\mathcal{F}}}$ 
    of $Y$ by the toric morphism $\pi_{\hat{\mathcal{F}}} : \mathcal{X}_{\hat{\mathcal{F}}} 
    \to \mathbb{C}^2 = \mathcal{X}_{\sigma_0}$ intersects 
    the toric boundary  $\partial \mathcal{X}_{\hat{\mathcal{F}}}$  only at points 
    of the orbits of dimension one 
    which correspond to the rays orthogonal to the compact edges 
    of $\mathcal{N}(Y)$. As a consequence, the strict transform $Y_{\mathcal{F}}$ intersects the toric 
    boundary $\partial \mathcal{X}_{\mathcal{F}}$ only at points of the images of those orbits by the toric 
    morphism $\hat{\pi} : \mathcal{X}_{\hat{\mathcal{F}}} \to \mathcal{X}_{\mathcal{F}}$. 
    Those images are precisely the 
    orbits of $\mathcal{X}_{\mathcal{F}}$ which correspond to the cones $\sigma$ of $\mathcal{F}$ 
    whose relative interiors contain 
    rays orthogonal to the compact edges of $\mathcal{N}(Y)$. To say that no such orbit is of dimension $0$ 
    means that no such cone $\sigma$ is of dimension two. This is equivalent to the fact that 
    the rays orthogonal to the compact edges of the Newton polygon of $Y$ 
    are cones of the fan $\mathcal{F}$. 
\end{proof}

The previous proof yields also the following result:

\begin{proposition}   \label{prop:torcharltroptwod}
        Let $Y \hookrightarrow (\mathbb{C}^2,0)$ be a plane curve singularity without branches contained 
        in the coordinate axes.  
        Let $\mathcal{F}$ be a fan with support contained in the non-negative quadrant $\sigma_0$ 
        of the weight plane $\mathbb{R}^2$. Consider the strict transform $Y_{\mathcal{F}}$ of $Y$ by the 
        toric morphism $\pi_{\mathcal{F}} : \mathcal{X}_{\mathcal{F}} \to 
        \mathbb{C}^2 = \mathcal{X}_{\sigma_0}$. Then  the 
        restriction $\pi : Y_{\mathcal{F}} \to Y$ of $\pi_{\mathcal{F}} $ to $Y_{\mathcal{F}}$ 
        is proper if and only if 
        the support of $\mathcal{F}$ contains  the rays orthogonal to the 
        compact edges of the Newton polygon $\mathcal{N}(Y)$ of $Y$.
\end{proposition}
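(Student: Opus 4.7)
The plan is to deduce Proposition \ref{prop:torcharltroptwod} from the analysis already performed in the proof of Proposition \ref{prop:minfan}, by reducing to the complete-fan case. First I would extend $\mathcal{F}$ to a fan $\mathcal{F}'$ whose support is $\sigma_0$ while keeping every cone of $\mathcal{F}$ as a cone of $\mathcal{F}'$; this is always possible in the plane by filling in the missing sectors of $\sigma_0 \smallsetminus |\mathcal{F}|$ with suitable two-dimensional cones glued to $\mathcal{F}$ along common faces. The resulting $\pi_{\mathcal{F}'}$ is a modification, so its restriction $\pi_{\mathcal{F}'}|_{Y_{\mathcal{F}'}} : Y_{\mathcal{F}'} \to Y$ is proper. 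Since $\mathcal{X}_{\mathcal{F}} \subset \mathcal{X}_{\mathcal{F}'}$ is open and the strict transform of $Y$, being the closure of $Y \cap (\mathbb{C}^*)^2$ in the ambient toric variety, commutes with restriction to open subvarieties, one obtains $Y_{\mathcal{F}} = Y_{\mathcal{F}'} \cap \mathcal{X}_{\mathcal{F}}$, and $\pi$ is exactly the restriction of the proper morphism $\pi_{\mathcal{F}'}|_{Y_{\mathcal{F}'}}$ to this open subset of $Y_{\mathcal{F}'}$.

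The heart of the argument is the topological equivalence: $\pi$ is proper if and only if $Y_{\mathcal{F}} = Y_{\mathcal{F}'}$. The $(\Leftarrow)$ direction is immediate. For $(\Rightarrow)$ I would argue by contrapositive: any point $z_0 \in Y_{\mathcal{F}'} \smallsetminus Y_{\mathcal{F}}$ lies in $\partial \mathcal{X}_{\mathcal{F}'}$ (since the torus is contained in $\mathcal{X}_{\mathcal{F}}$) and is the limit in the Hausdorff space $\mathcal{X}_{\mathcal{F}'}$ of a sequence $(w_n)$ drawn from the dense subset $Y \cap (\mathbb{C}^*)^2 \subset Y_{\mathcal{F}}$. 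As will be justified below, one has $\pi_{\mathcal{F}'}(z_0) = 0$, so $\pi(w_n) \to 0$ in $Y$ while $(w_n)$ has no convergent subsequence in $Y_{\mathcal{F}}$ (its only limit in the ambient $Y_{\mathcal{F}'}$ is $z_0 \notin Y_{\mathcal{F}}$); hence $\pi$ is not proper.

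To translate $Y_{\mathcal{F}} = Y_{\mathcal{F}'}$ into the combinatorial condition of the proposition, I would apply the orbit-incidence conclusion of the proof of Proposition \ref{prop:minfan} to the fan $\mathcal{F}'$ in place of $\mathcal{F}$: the strict transform $Y_{\mathcal{F}'}$ meets $\partial \mathcal{X}_{\mathcal{F}'}$ exactly at orbits $O_\sigma$ for cones $\sigma \in \mathcal{F}'$ whose relative interior contains a ray $\rho$ orthogonal to a compact edge of $\mathcal{N}(Y)$. Each such $\sigma$ satisfies $\sigma^\circ \subset \sigma_0^\circ$ (since every orthogonal ray lies in $\sigma_0^\circ$), which yields $\pi_{\mathcal{F}'}(z_0) = 0$ used in the previous paragraph. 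Moreover, for each orthogonal $\rho$ the cone $\sigma_\rho \in \mathcal{F}'$ whose relative interior contains $\rho$ is unique, since any point of $|\mathcal{F}'|$ belongs to the relative interior of exactly one cone of the fan. Consequently $Y_{\mathcal{F}'} \subset \mathcal{X}_{\mathcal{F}}$ holds if and only if $\sigma_\rho \in \mathcal{F}$ for every orthogonal $\rho$, which (since $\rho \subset \sigma_\rho$) amounts to $\rho \subset |\mathcal{F}|$ for every such $\rho$.

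The main obstacle I anticipate is the careful bookkeeping around the completion $\mathcal{F}'$ and the open-subset behavior of the strict transform: namely, verifying that $\mathcal{F}$ admits a completion to $\sigma_0$ leaving its cones untouched, and that the scheme-theoretic strict transform is compatible with the open immersion $\mathcal{X}_{\mathcal{F}} \hookrightarrow \mathcal{X}_{\mathcal{F}'}$. Both points are elementary in dimension two but deserve explicit attention; once they are in hand, the topological and combinatorial pieces combine cleanly with the already-established orbit-incidence result from the proof of Proposition \ref{prop:minfan}.
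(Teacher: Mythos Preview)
Your proposal is correct and follows essentially the same route as the paper, which simply states that ``the previous proof yields also the following result'': you complete $\mathcal{F}$ to a fan $\mathcal{F}'$ with support $\sigma_0$, invoke the orbit-incidence analysis from the proof of Proposition~\ref{prop:minfan}, and translate the properness of $\pi$ into the combinatorial containment of the orthogonal rays in $|\mathcal{F}|$. Your write-up is more explicit than the paper's one-line remark---in particular your sequential argument for the equivalence ``$\pi$ proper $\Leftrightarrow Y_{\mathcal{F}}=Y_{\mathcal{F}'}$'' and your careful identification of $Y_{\mathcal{F}}$ as $Y_{\mathcal{F}'}\cap\mathcal{X}_{\mathcal{F}}$---but the underlying strategy is identical.
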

   
   This proves the equivalence of the two characterizations of local tropicalization given in 
   Section \ref{sect:intro} in the case of plane curve singularitues $Y \hookrightarrow (\mathbb{C}^2,0)$ 
   without branches contained in the coordinate axes.

\medskip
We have worked above with initial terms of univariate series (see the equalities \eqref{eq:expofxy}). 
There is a corresponding notion for bivariate series, this time {\em relative to a weight vector}: 

\begin{definition}   \label{def:initform}
  Let $(p,q) \in \sigma_0^{\circ} \cap \mathbb{Z}^2 = \mathbb{Z}_{>0}^{2}$ and 
  $f \in \mathbb{C}[[x,y]] \smallsetminus \{0\}$.  The {\bf $(p,q)$-initial form} 
  of the series $f$ \index{Initial!form of a series} is 
  the $(p,q)$-weighted homogeneous polynomial
      \[ \boxed{\mathrm{in}_{(p,q)} f} :=\sum_{(\alpha,\beta)\in \delta_{(p,q)}(f)} 
      f_{\alpha,\beta}\, x^\alpha y^\beta  
          \ \ \in \mathbb{C}[x,y]. \]
   Here $\delta_{(p,q)}(f) \subseteq \mathrm{supp}\  f$ denotes the basis 
   of $\mathrm{supp}\  f$ relative to $(p,q)$,  introduced in Definition \ref{def:minlinform}. 
\end{definition}

This notion allows to rephrase Theorem \ref{thm:pqpair} using a defining series of the given plane curve
singularity, instead of parametrized arcs contained in it: 

\begin{corollary}   \label{cor:reforminit}
    Let $(p,q)\in  \mathbb{Z}_{>0}^{2}$ and $f \in \mathbb{C}[[x,y]] \smallsetminus \{0\}$. 
    Then, $(p,q)$ belongs to the local tropicalization of 
    $Z(f) \hookrightarrow (\mathbb{C}^2,0)$ if and only if 
     the initial form $\mathrm{in}_{(p,q)} f$ is not a monomial.
\end{corollary}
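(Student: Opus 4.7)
The plan is to reduce the claim to Corollary \ref{cor:arcweightplanecurve} by a direct convex-geometric translation: the initial form $\mathrm{in}_{(p,q)} f$ fails to be a monomial precisely when the face of the Newton polygon $\mathcal{N}(f)$ selected by the weight $(p,q)$ is a compact edge rather than a vertex, which happens precisely when $\mathbb{R}_{\geq 0}(p,q)$ is orthogonal to a compact edge of $\mathcal{N}(f)$.

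First, I would reduce to the case treated by Corollary \ref{cor:arcweightplanecurve} by writing $f = x^a y^b g$, with $g$ divisible by neither $x$ nor $y$. Because $\mathcal{N}(x^a y^b) = (a,b) + \mathbb{R}_{\geq 0}^2$ is a translated orthant, Proposition \ref{prop:NPMinksum} gives $\mathcal{N}(f) = (a,b) + \mathcal{N}(g)$, so the compact edges of $\mathcal{N}(f)$ are the translates of those of $\mathcal{N}(g)$; similarly $\mathrm{in}_{(p,q)} f = x^a y^b \cdot \mathrm{in}_{(p,q)} g$, so one side is a monomial iff the other is. Moreover $Z(f)$ and $Z(g)$ have the same local tropicalization, since the branches $Z(x), Z(y)$ sit in $\partial \mathbb{C}^2$ and are ignored by the definition. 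Hence we may assume $g = f$ satisfies the hypothesis of Corollary \ref{cor:arcweightplanecurve}, which identifies the local tropicalization with the union of the rays orthogonal to the compact edges of $\mathcal{N}(f)$.

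Next, I would invoke Proposition \ref{prop:congeomapp} on $\mathrm{supp}\, f$, noting (as remarked just before Definition \ref{def:NP}) that for $(p,q) \in \sigma_0^\circ$ the set $\delta_{(p,q)}(f)$ on which the linear functional $(p,q)$ achieves its minimum is finite and equals $\mathrm{supp}\, f \cap F$, where $F$ is a compact face of $\mathcal{N}(f)$; this face is either a vertex or an edge of $\mathcal{N}(f)$, and in the edge case $\mathbb{R}_{\geq 0}(p,q)$ is precisely the ray orthogonal to $F$. The remaining point is that the vertices of $\mathcal{N}(f)$ on this compact lower boundary belong to $\mathrm{supp}\, f$ (they cannot arise as strict sums in the Minkowski decomposition $\mathrm{supp}\, f + \mathbb{R}_{\geq 0}^2$ since addition of a non-zero element of $\mathbb{R}_{\geq 0}^2$ translates one strictly into the interior), so $|\delta_{(p,q)}(f)| \geq 2$ iff $F$ is an edge, and $|\delta_{(p,q)}(f)| = 1$ iff $F$ is a vertex.

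Finally, I would chain the equivalences: by Definition \ref{def:initform}, $\mathrm{in}_{(p,q)} f = \sum_{(\alpha,\beta)\in \delta_{(p,q)}(f)} f_{\alpha,\beta} x^\alpha y^\beta$ with all $f_{\alpha,\beta} \neq 0$, so $\mathrm{in}_{(p,q)} f$ is a monomial iff $|\delta_{(p,q)}(f)| = 1$ iff the selected face $F$ is a vertex iff $\mathbb{R}_{\geq 0}(p,q)$ is not orthogonal to any compact edge of $\mathcal{N}(f)$ iff, by Corollary \ref{cor:arcweightplanecurve}, $(p,q)$ does not lie in the local tropicalization of $Z(f)$. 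The only step that requires any care, and which I would regard as the main (mild) obstacle, is the convex-geometric verification that the compact faces of $\mathcal{N}(f)$ are spanned by elements of $\mathrm{supp}\, f$, so that selecting a face via $(p,q)$ on $\mathcal{N}(f)$ and selecting a minimizing subset of $\mathrm{supp}\, f$ yield exactly the same data.
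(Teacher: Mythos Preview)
Your proof is correct and follows essentially the same route as the paper's: invoke Corollary~\ref{cor:arcweightplanecurve} to identify the local tropicalization with the rays orthogonal to the compact edges of $\mathcal{N}(f)$, then observe that $\mathrm{in}_{(p,q)} f$ is a monomial iff $|\delta_{(p,q)}(f)|=1$ iff the compact face of $\mathcal{N}(f)$ selected by $(p,q)$ is a vertex rather than an edge. Your preliminary reduction $f=x^a y^b g$ is unnecessary: Section~\ref{sect:Newtonpolyg} carries the standing assumption that $f$ is divisible by neither $x$ nor $y$, and without that assumption the local tropicalization of $Z(f)$ is not even defined (it is only introduced for interior subtoric germs), so the paper's own proof is simply your final chain of equivalences, stated more tersely.
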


\begin{proof}
    By Corollary \ref{cor:arcweightplanecurve}, $(p,q)$ belongs to the local tropicalization of 
    $Z(f) \hookrightarrow (\mathbb{C}^2,0)$ if and only if $(p,q)$ is orthogonal to a compact edge of 
    the Newton polygon $\mathcal{N}(f)$, that is, if and only if the basis $\delta_{(p,q)}(f)$  
    has at least two elements. In turn, by Definition \ref{def:initform}, this is equivalent to the fact 
    that the initial form $\mathrm{in}_{(p,q)} f$ is not a monomial.
\end{proof}

{\em Corollary \ref{cor:reforminit} may be turned into a third definition of the local tropicalization 
of $Z(f) \hookrightarrow (\mathbb{C}^2,0)$, as the union of rays generated by the 
vectors $(p,q) \in  \mathbb{Z}_{>0}^{2}$ such that 
the initial form $\mathrm{in}_{(p,q)} f$ is not a monomial.} 
We will see below that a slight modification of this 
definition may be given for germs of higher dimension embedded in germs of toric varieties 
(see Definition \ref{def:initweightv}) and that 
it is equivalent to the two definitions presented in the introduction (see Theorem \ref{thm:coincidthm}). 
Before that, we need to explain our notations from toric geometry and valuation theory 
in arbitrary dimensions.

\medskip
\section{Basic facts about toric geometry and valuation theory}  \label{sect:notations}
\medskip

In this section we explain elementary facts as well as our notations and conventions 
about {\em toric geometry} and {\em valuation theory}. A basic relation between the 
two is that any {\em subtoric arc} in the sense of Definition \ref{def:arcs} defines a 
{\em semivaluation} (see Example \ref{ex:arcsemival}). 
\medskip

{\em Let us start with toric geometry.} Here we work in arbitrary dimension, by contrast with 
Sections \ref{sect:motivex} and \ref{sect:Newtonpolyg}, in which we 
considered only toric surfaces. The readers needing more details may consult the 
textbooks \cite{O 88}, \cite{F 93}, \cite{CLS 11}. Those who want quick introductions to 
the subject may consult \cite{B 01}, \cite{C 03} or \cite[Section 1.3]{GGP 20}. 

From now on, we replace the field $\mathbb{C}$ of the previous sections with an arbitrary algebraically 
closed field $\boxed{\mathbb{K}}$. Let $\boxed{N}$ and $\boxed{M}$ be two 
{\bf lattices}, \index{Lattice} that is, two free abelian groups of finite rank. 
We assume that they are {\bf dual} of each other, which means that we fix
a unimodular bilinear pairing 
     \[ \begin{array}{ccc}
           N \times M   &   \to   & \mathbb{Z}  \\
              (w, m) &  \mapsto &  \boxed{w \cdot m}
         \end{array} . \] 
 This pairing produces identifications
  \[ M =   \mbox{Hom}(N, \mathbb{Z}), \  N =   \mbox{Hom}(M, \mathbb{Z}).  \]
 We interpret the two lattices differently: $M$ is the {\bf lattice of exponents of monomials} 
 \index{Lattice!of exponents of monomials}
 and $N$ is the corresponding {\bf weight lattice}. \index{Lattice!weight} 
 That is, each vector $m \in M$ gives rise 
 to a {\bf monomial} $\boxed{\chi^m} \in \mathbb{K} [M]$, where $\boxed{\mathbb{K}[M]}$ denotes the 
 $\mathbb{K}$-algebra of the lattice $M$, and each vector $w \in N$ attributes the {\bf weight} 
 $w \cdot m \in \mathbb{Z}$ to the monomial  $\chi^m$. 
 
 Once a basis $(e_1, \dots, e_n)$ of $M$ is fixed, one gets a dual basis 
 $(e_1^{\vee}, \dots, e_n^{\vee})$ of $N$. One may write then   
 $m = m(1) e_1 + \cdots + m(n) e_n$ and $w = w(1) e_1^{\vee} + \cdots + w(n) e_n^{\vee}$, 
 where all the coefficients $m(i)$ and $w(i)$ are integers. 
 This allows to write $\chi^m = x_1^{m(1)} \cdots x_n^{m(n)}$, where 
 $\boxed{x_j} := \chi^{e_j}$ for every $j \in \{1, \dots, n\}$ and $w$ endows the variable 
 $x_j$ with the weight $w(j)$.  
 That is, $\mathbb{K}[M]$ is an intrinsic way of writing the $\mathbb{K}$-algebra of {\bf Laurent polynomials} \index{Polynomial!Laurent} in $n$ variables and $N$ is an intrinsic way of looking 
 at all the ways to endow the variables of such polynomials with integer weights.  
 
 Let $\tau$ be a {\bf strictly convex rational polyhedral cone} inside the real vector space 
 $\boxed{N_{\mathbb{R}}} :=  N \otimes_{\mathbb{Z}} \mathbb{R}$ generated by $N$.  
 That is, $\tau$ is a cone 
 $\boxed{\mathbb{R}_{\geq 0} \langle w_1, \dots, w_k \rangle}$ generated by a finite set of 
 vectors $w_j \in N$ and it is assumed not to contain vector subspaces of positive dimension. 
 We denote by $\boxed{\tau^{\circ}}$  the interior of $\tau$ in the 
 topological space $N_{\mathbb{R}}$ and by 
    \[  \boxed{\tau^{\vee}} := \{ m \in M_{\mathbb{R}}, \   w \cdot m \geq 0 \   \forall \  w \in \tau \} \subseteq M_{\mathbb{R}} \] 
 its {\bf dual cone}. \index{Dual cone} 
 The intersection $\tau^{\vee} \cap M$ is a submonoid of $(M, +)$, generating  
 a subalgebra $\mathbb{K}[\tau^{\vee} \cap M]$ of $\mathbb{K}[M]$. Its spectrum is by definition the 
 {\bf affine toric variety $\mathcal{X}_{\tau}$ \index{Toric!affine variety} 
 defined by the cone $\tau$ over the field $\mathbb{K}$}:  
      \[  \boxed{\mathcal{X}_{\tau}} :=  \mathrm{Spec} (\mathbb{K}[\tau^{\vee} \cap M]). \] 
      
   If $\tau$ is the origin $0$ of $N_{\mathbb{R}}$, one gets therefore 
   $\mathcal{X}_0 =  \mathrm{Spec} (\mathbb{K}[M])$. 
   This space is also denoted by $\boxed{T_N}$ and is called the {\bf algebraic torus} 
   \index{Torus!algebraic} defined over $\mathbb{K}$ 
   and corresponding  to the weight lattice $N$.

 The set $\boxed{\mathcal{X}_{\tau}(\mathbb{K})}$ of {\bf $\mathbb{K}$-valued points of $\mathcal{X}_{\tau}$} is identified naturally with 
   \[  \mbox{Hom}((\tau^{\vee} \cap M, +), (\mathbb{K}, \cdot)), \]
 where the morphisms are taken in the category of morphisms of monoids. This identification is obtained by associating to each $\mathbb{K}$-valued point $p \in \mathcal{X}_{\tau}(\mathbb{K})$ the evaluation map which sends every 
 $m \in \tau^{\vee} \cap M$ to $\chi^m(p) \in \mathbb{K}$. A basis $(e_1, \dots, e_n)$ of $M$ allows to identify the set $T_N(\mathbb{K})$ of $\mathbb{K}$-valued points of $T_N$ with $(\mathbb{K}^*)^n$.
  
 The variety $\mathcal{X}_{\tau}$ is {\em smooth} if and only if the cone $\tau$ is 
 \index{Cone!regular}{\bf regular}, that is, $\tau = \mathbb{R}_{\geq 0} \langle w_1, \dots, w_k \rangle$, where $(w_1, \dots, w_k)$ may be extended to a basis of the lattice $N$.  A choice of such an extension identifies $\mathcal{X}_{\tau}(\mathbb{K})$ with $\mathbb{K}^k \times (\mathbb{K}^*)^{n-k}$. Particular cases of this fact were listed as points \eqref{trivcone}, \eqref{raycone} and \eqref{reg2dcone} of Section \ref{sect:motivex}. 
 
 A {\bf fan} \index{Fan} 
 is a finite set $\mathcal{F}$ of strictly convex rational polyhedral subcones of the weight vector space 
 $N_{\mathbb{R}}$, satisfying the following properties:
    \begin{itemize}
        \item if $\tau \in \mathcal{F}$, then  all the faces of $\tau$ also belong to $\mathcal{F}$;
        \item if $\tau_1, \tau_2  \in \mathcal{F}$, then $\tau_1 \cap \tau_2$ is a face of both $\tau_1$ and $\tau_2$. 
    \end{itemize}
  The {\bf support} \index{Support!of a fan} $\boxed{| \mathcal{F} |} \subseteq N_{\mathbb{R}}$ 
 of the fan $\mathcal{F}$ is the union of its cones. All the fans of Figure \ref{fig:fansforcusp} have the same support, equal to the non-negative quadrant $\sigma_0$ of $\mathbb{R}^2$. 
 
 Every fan $\mathcal{F}$ determines a toric variety $\boxed{\mathcal{X}_{\mathcal{F}}}$, obtained by gluing the affine toric varieties $\mathcal{X}_{\tau}$ defined by the cones $\tau$ of $\mathcal{F}$. Each variety
 $\mathcal{X}_{\tau}$ becomes a Zariski-open subvariety of $\mathcal{X}_{\mathcal{F}}$. The torus $T_N = \mathcal{X}_0$ acts on $\mathcal{X}_{\mathcal{F}}$ by an extension of the canonical translational action on itself. This action partitions $\mathcal{X}_{\mathcal{F}}$ into orbits. Each open subset $\mathcal{X}_{\tau}$, for $\tau \in \mathcal{F}$, contains a unique orbit $\boxed{O_{\tau}}$ 
 \index{Orbit!of a toric variety} which is closed in $\mathcal{X}_\tau$. It is also its unique 
 smallest-dimensional orbit, and one has the equality:
   \[ \dim (O_{\tau}) + \dim (\tau) = \mbox{rk} (N). \]
 This generalizes point \eqref{inclpresbij} of Section \ref{sect:motivex}. 
 The {\bf toric boundary} \index{Toric!boundary} 
 $\boxed{\partial \mathcal{X}_{\tau}}$ of $\mathcal{X}_{\tau}$ is the complement of the dense torus of 
 $\mathcal{X}_{0}$.  That is, it is the union of orbits of codimension at least one  in $\mathcal{X}_\tau$. 
  
 If $| \mathcal{F} | \subseteq \sigma$, then there is a canonical toric birational morphism
    \[  \boxed{\pi_{\mathcal{F}}} :  \mathcal{X}_{\mathcal{F}} \to \mathcal{X}_{\sigma}  \]
  which identifies the dense tori of $\mathcal{X}_{\mathcal{F}}$ and  $\mathcal{X}_{\sigma}$. 
  This morphism is a modification in the sense of Definition~\ref{def:modif} 
  (that is, it is moreover {\em proper}) if and only  if $| \mathcal{F} | = \sigma$. 

\medskip
{\bf In the rest of the paper, we assume that $\boxed{\sigma} \subset N_{\mathbb{R}}$ 
denotes a strictly convex rational polyhedral cone with non-empty interior $\sigma^{\circ}$}. 
This means that the closed orbit $O_{\sigma}$ of $\mathcal{X}_{\sigma}$ has 
dimension zero. For simplicity, 
we denote it by $\boxed{0}$ and the monoid $(\sigma^{\vee} \cap M, +)$ by $\boxed{\Gamma}$.  
Let  $\boxed{(\mathcal{X}_{\sigma}, 0)}$ be the formal germ of $\mathcal{X}_{\sigma}$ at its closed orbit $0$. 
Its local ring $\boxed{\mathcal{O}_{\mathcal{X}_{\sigma}, 0}}$ is equal to the $\mathbb{K}$-algebra $\boxed{\mathbb{K}[[\Gamma]]}$ 
of formal power series with coefficients in $\mathbb{K}$ and exponents in the monoid 
$\Gamma$. If $f \in \mathbb{K}[[\Gamma]]$, its {\bf support} $\boxed{\mathrm{supp}\  f} \subseteq \Gamma$ may be defined as in the two-dimensional regular case 
(see Definition \ref{def:supptwod}). 

In Definition \ref{def:loctrop} we will explain what does it mean to tropicalize locally 
{\em interior subtoric germs}, in the following sense:

\begin{definition}   \label{def:subtorgerm}
     We say that a formal subgerm $Y \hookrightarrow (\mathcal{X}_{\sigma}, 0)$ is a 
     {\bf subtoric germ}.  \index{Subtoric!germ}
     If it is reduced and none of its irreducible components is contained in the toric boundary 
     $\partial \mathcal{X}_{\sigma}$, we call it an {\bf interior subtoric germ}. 
     \index{Subtoric!germ!interior}
 \end{definition}
 
 \begin{remark}  \label{rem:contmon}
      Denote by $\boxed{I_Y} \subseteq \mathbb{K}[[\Gamma]]$ the defining ideal of the subtoric germ $Y \hookrightarrow (\mathcal{X}_{\sigma}, 0)$. Note that $Y$ is contained in the toric boundary  $\partial \mathcal{X}_{\sigma}$ if and only if $I_Y$ contains monomials $\chi^m$, with $m \in \Gamma$. Therefore, an irreducible subtoric germ $Y \hookrightarrow (\mathcal{X}_{\sigma}, 0)$ is interior if and only if its defining ideal $I_Y$ does not contain monomials. 
   \end{remark}
 
Note that already in Section \ref{sect:intro} we considered only {\em interior} subtoric 
germs of $(\mathbb{C}^n, 0)$, 
without using this terminology. The plane curve singularities of Section \ref{sect:Newtonpolyg} 
were also assumed to be interior subtoric. 
Let us introduce an analogous terminology for {\em arcs}:

\begin{definition}  \label{def:arcs}
   An {\bf arc of a formal germ} $Y$  \index{Arc} is a formal morphism of the form 
         \[   \mbox{Spec}(\mathbb{K}[[t]]) \to Y.  \]
    A {\bf subtoric arc} \index{Arc!interior}  \index{Arc!interior!subtoric}
    is an arc of $(\mathcal{X}_{\sigma}, 0)$. An {\bf interior subtoric arc} is a subtoric arc 
    of $(\mathcal{X}_{\sigma}, 0)$ whose  image is not included in the toric boundary $\partial \mathcal{X}_{\sigma}$. 
 \end{definition}

 \medskip
We want to explain now how each arc of $Y$ defines a {\em semivaluation} of its local ring 
$\mathcal{O}_{Y,0}$ (see Example \ref{ex:arcsemival}). We first explain basic facts about 
valuations and semivaluations. In this paper, we always assume that the {\em semivaluations} are real-valued: 

\begin{definition}   \label{def:semival}  \index{Valuation}   \index{Semivaluation}
   Let $R$ be a commutative ring with $1 \neq 0$. A {\bf valuation centered in $R$} 
    is a map $\nu\colon R\to [0, +\infty]$ such that:
  \begin{itemize}
      \item $\nu(x)=+\infty$ if and only if $x=0$;
      \item $\nu(1) =0$;
      \item $\nu(xy)=\nu(x)+\nu(y)$  for all $x,y\in R$;
      \item $\nu(x+y)\geq \min\{\nu(x),\nu(y)\}$  for all $x,y\in R$.
  \end{itemize}
     A {\bf semivaluation centered in $R$} is defined similarly, 
     excepted that the first condition is relaxed to $\nu(0)=+\infty$. 
     That is, $\nu$ is allowed to take the value $+\infty$ on elements different from $0$. 
\end{definition}

For instance, the map 
   \[ \boxed{\nu_t} : \mathbb{K}[[t]] \to [0, + \infty] \]
associating to each non-zero formal power series its  initial exponent and to $0$ the symbol $+ \infty$ is a valuation of the $\mathbb{K}$-algebra $\mathbb{K}[[t]]$, called the {\bf $t$-adic valuation}.  
\index{Valuation!adic}

\begin{remark}   \label{rem:valconsid}
    The classical notion of {\em Krull valuation} has a field instead of a ring as source 
   and an arbitrary totally ordered abelian group enriched with the symbol $+ \infty$ 
   as target  (see \cite[Chapter VI.8]{ZS 60}). 
   This notion was later extended to rings, replacing fields by rings as sources 
   (see for instance  \cite[Section 2]{V 00}) and the notion of {\em semivaluation} was obtained 
   by allowing non-zero elements to take the value $+ \infty$. 
   In particular, real-valued semivaluations of a ring $R$ take values in $\mathbb{R} \cup \{+ \infty\}$. 
   Instead, the {\em centered} semivaluations of Definition \ref{def:semival} take only non-negative 
   values. {\bf In this paper we will only consider centered semivaluations, that is why we 
   suppress the qualificative {\em centered}.}  A centered semivaluation 
   has a {\em center} in $\mathrm{Spec} (R)$, defined by the prime ideal of elements $x \in R$ 
   such that $\nu(x) > 0$. We will not use this notion. 
\end{remark}

 It follows immediately from Definition \ref{def:semival} that if $\nu$ is a semivaluation on $R$, 
 then the set
     \[  \boxed{\mathfrak{p}_{\nu}} := \{x\in R \,|\, \nu(x)=+\infty\}   \]
is a prime ideal of $R$. 

\begin{definition}  \label{def:suppval}
      Let $\nu$ be a semivaluation of the ring $R$. We say that the closed subscheme of 
      $\mathrm{Spec}(R)$ defined by the prime ideal $\mathfrak{p}_{\nu}$ is the {\bf support} of $\nu$. 
      \index{Support!of a semivaluation}
\end{definition}

The name of {\em support} alludes to the fact that the semivaluation $\nu$ centered in $R$
is determined by a valuation centered in $R/\mathfrak{p}_{\nu}$. Indeed, the semivaluation $\nu$ 
descends to a valuation $\nu'$ of $R/\mathfrak{p}_{\nu}$, therefore 
it may be expressed as the composition:
      \[   R\to R/\mathfrak{p}_{\nu} \overset{\nu'}{\to} [0, +\infty].     \]
  More generally, whenever $\varphi : R \to R'$ is a morphism of rings and 
  $\nu' : R' \to [0, +\infty]$ is a valuation, the following composition is a semivaluation:
      \[   R  \overset{\varphi}{\to} R' \overset{\nu'}{\to} [0, +\infty].    \]

 \begin{example}   \label{ex:arcsemival}
          For instance, from the algebraic viewpoint an arc of $Y$ is defined by a morphism of rings 
   $ \varphi:  \mathcal{O}_{Y,0} \to  \mathbb{K}[[t]]$, therefore the $t$-adic valuation 
   $ \nu_t : \mathbb{K}[[t]] \to [0, +\infty]$ induces a semivaluation:
        \[  \nu_t \circ \varphi :  \mathcal{O}_{Y,0} \to [0, +\infty]. \]
  This is the announced explanation of the way an arc determines a semivaluation. Note that 
  this semivaluation is never a valuation when $Y$ is of dimension at least two, 
  because then there are non-zero elements $h$ of $\mathcal{O}_{Y,0}$ satisfying 
  $\varphi(h) = 0$.
 \end{example}

In Definitions \ref{def:subtorgerm} and \ref{def:arcs} we introduced the notions of 
{\em interior subtoric germs} and {\em interior arcs} respectively. Let us define an analogous 
notion for {\em semivaluations}, in the sense of Definition \ref{def:semival}: 

\begin{definition}   \label{def:intval}  \index{Semivaluation!interior}
     Let $\nu: \mathbb{K}[[\Gamma]] \to [0, + \infty]$ be a semivaluation. We call it an 
     {\bf interior semivaluation} if its support in the sense of Definition \ref{def:suppval} 
     is not contained in the toric boundary of $\mathcal{X}_{\sigma}$. 
     
     A semivaluation $\nu$ of the ring $\mathcal{O}_{Y,0}$ of a subtoric germ 
     $Y \hookrightarrow (\mathcal{X}_{\sigma},0)$ is called {\bf interior}, 
     if the semivaluation of $\mathbb{K}[[\Gamma]]$ obtained by composing $\nu$ 
     with the canonical projection $\mathbb{K}[[\Gamma]] \to \mathcal{O}_{Y,0}$ is so.
\end{definition}

The reader can check that a semivaluation $\nu : \mathbb{K}[[\Gamma]] \to [0, + \infty]$
is interior if and only if 
$\nu(\chi^m)\ne +\infty$ for all $m\in \Gamma$. Moreover, the semivaluation associated 
to an arc of a subtoric germ in the sense of Example \ref{ex:arcsemival} is interior if and 
only if the arc is itself interior.

\medskip
\section{Four sets of weight vectors associated to an interior subtoric germ}  \label{sect:weightsofarcsandvals}
\medskip

In this section we generalize the ideas of Section \ref{sect:Newtonpolyg} from 
plane curve singularities without branches contained in the coordinate axes to
arbitrary interior subtoric germs in the sense of Definition \ref{def:subtorgerm}. Namely,  
to every interior subtoric germ $Y \hookrightarrow (\mathcal{X}_{\sigma},0)$ 
we associate four kinds of 
vectors contained in the weight cone $\sigma$ (see Definitions 
\ref{def:compactifcone}, \ref{def:arcweights}, \ref{def:initweightv}, \ref{def:valweight}), 
which will be used in Theorem \ref{thm:coincidthm} to give four different characterizations of 
a conic subset of $\sigma$. It is this subset which we call 
the local tropicalization of $Y \hookrightarrow (\mathcal{X}_{\sigma},0)$ 
(see Definition \ref{def:loctrop}). 
\medskip

First, let us turn the toric-geometric explanations of Section \ref{sect:intro} into the following definition:

\begin{definition}   \label{def:compactifcone}
     Let $Y \hookrightarrow (\mathcal{X}_{\sigma},0)$ be an interior subtoric germ, in the sense 
     of Definition~\ref{def:subtorgerm}. \index{Fan!compactifying}
     A {\bf compactifying fan} of $Y$ is a fan $\mathcal{F}$  such that 
     $| \mathcal{F} | \subseteq \sigma$ 
     and such that the restriction $\pi : Y_{\mathcal{F}} \to Y$ 
     of the toric birational morphism 
     $\pi_{\mathcal{F}} : \mathcal{X}_{\mathcal{F}} \to \mathcal{X}_{\sigma}$ to the strict transform 
     $\boxed{Y_{\mathcal{F}}}$ of $Y$ by $\pi_{\mathcal{F}}$ is {\em proper}. 
     The {\bf compactifying cone} of $Y$ \index{Cone!compactifying}
     is the intersection of the supports of all its compactifying fans. 
\end{definition}

\begin{example}   \label{ex:compactfan}
     Let us give a simple example of fan $\mathcal{F}$ such that $\pi_{\mathcal{F}}$ is not proper but 
     $\pi$ is proper. Choose $n=2$ and let $Y$ be the line $Z(y-x)$ 
     of the  affine plane $\mathbb{K}^2_{x,y}$ with coordinates $x,y$.  
     Let $ \pi_{\mathcal{F}}$ be the toric birational morphism $\mathbb{K}^2_{u,v} \to 
     \mathbb{K}^2_{x,y}$ defined 
     by the equations $x = u, y = uv$, that is, one of the two charts of the blowup of the origin 
     of $\mathbb{K}^2_{x,y}$. The corresponding fan $\mathcal{F}$ consists of the faces of the cone 
     $\mathbb{R}_{\geq 0}\langle  (1,1), (0,1) \rangle$ 
     spanned by the vectors $(1,1)$ and $(0,1)$. The strict transform of $Y$ by $\pi_{\mathcal{F}}$ is 
     $Y_{\mathcal{F}} = Z(v -1)$. 
     The toric morphism $\pi_{\mathcal{F}}$ is obviously not proper, as the preimage 
     $\pi_{\mathcal{F}}^{-1}(0) = Z(u) \hookrightarrow \mathbb{K}^2_{u,v}$ of the origin of 
     $\mathbb{K}^2_{x,y}$ is not complete. 
     But the restriction $\pi : Z(v-1) \to Z(y-x)$ of $\pi_{\mathcal{F}}$ to $Y_{\mathcal{F}}$ 
     is an isomorphism, hence it is proper. 
\end{example}

  Let us look at the compactifying cone of an interior subtoric branch 
  $B \hookrightarrow (\mathcal{X}_{\sigma},0)$, in the sense of Definition \ref{def:compactifcone}. 
  A normalisation morphism $\mathrm{Spec} (\mathbb{K}[[t]]) \to B$ is an interior subtoric arc 
  $\gamma : \mathrm{Spec}(\mathbb{K}[[t]]) \to (\mathcal{X}_{\sigma},0)$,  in the sense of 
  Definition \ref{def:arcs}. 
  Each such arc has an associated weight vector $w(\gamma)$, defined as follows. Denote by  
      \[ \boxed{\gamma^*} : (\mathbb{K}[[\Gamma]], \cdot)  \to (\mathbb{K}[[t]], \cdot).  \] 
   the corresponding local morphism of $\mathbb{K}$-algebras seen
   as a morphism of multiplicative monoids.  
    By composing $\gamma^*$ at the source with the natural injection of monoids 
       $\boxed{i_{\Gamma}} : (\Gamma, +) \hookrightarrow (\mathbb{K}[[\Gamma]], \cdot)$ 
       and at the target with the 
       $t$-adic valuation seen as a 
       morphism $ \nu_t : (\mathbb{K}[[t]], \cdot)  \to (\mathbb{Z}_{\geq 0} \cup \{+ \infty\}, +) $ of monoids, 
       we get a morphism of monoids 
           $  \nu_t \circ \gamma^* \circ i_{\Gamma}  : (\Gamma, +) \to (\mathbb{Z}_{\geq 0}, +)$,  
        that is, an element of the monoid $(\sigma \ \cap \ N, +)$. 
       As the arc $\gamma$ is assumed to be subtoric, one has 
       $ (\nu_t \circ \gamma^* \circ i_{\Gamma})(\Gamma \smallsetminus \{0\}) \subseteq \mathbb{Z}_{>0}$, 
       which shows that $\nu_t \circ \gamma^* \circ i_{\Gamma} \in \sigma^{\circ}  \cap   N$. 
       We may formulate:

 \begin{definition}   \label{def:weightarc}
       Let   $\gamma : \mathrm{Spec}(\mathbb{K}[[t]]) \to (\mathcal{X}_{\sigma},0)$ 
       be an interior subtoric arc. \index{Weight vector!of an interior subtoric arc}
       Its {\bf weight vector $\boxed{w(\gamma)}$} is the morphism of monoids 
            \[  \nu_t \circ \gamma^* \circ i_{\Gamma}  : (\Gamma, +) \to (\mathbb{Z}_{\geq 0}, +), \]
        seen as an element of $\sigma^{\circ}  \cap   N$.            
 \end{definition}
 
For instance, if $\mathcal{X}_{\sigma} (\mathbb{K})= \mathbb{K}^2$ and $\gamma$ is defined by 
$t \mapsto (x(t), y(t))$ as in formula \eqref{eq:expofxy}, one gets  
$w(\gamma) = (p,q) = (\nu_t(x(t)), \nu_t(y(t)))$.

 One has the following property, which shows that {\em in the case of interior subtoric branches}, 
 the two definitions of local tropicalization given in Section \ref{sect:intro} are equivalent: 
 
 \begin{proposition}   \label{prop:equivintbr}
     Let $B \hookrightarrow (\mathcal{X}_{\sigma},0)$ be an interior subtoric branch and let 
     $\gamma : \mathrm{Spec}(\mathbb{K}[[t]]) \to (\mathcal{X}_{\sigma}, 0)$ be an arc 
     whose image is $B$. Then, the ray 
     $\mathbb{R}_{\geq 0} w(\gamma) \subset \sigma$ generated by its weight vector in the sense of 
     Definition \ref{def:weightarc} is independent 
     of the choice of the arc $\gamma$ and is equal to the 
     compactifying cone of $B$, in the sense of Definition \ref{def:compactifcone}. 
 \end{proposition}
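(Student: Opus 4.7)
The plan is to prove both assertions together by establishing two inclusions of cones, with the independence of $\gamma$ falling out along the way.

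First, I would handle the independence claim. Since $B$ is a branch, it admits a unique (up to isomorphism) normalization $\tilde{B} = \mathrm{Spec}(\mathbb{K}[[s]])$, and any arc $\gamma: \mathrm{Spec}(\mathbb{K}[[t]]) \to (\mathcal{X}_{\sigma},0)$ whose image is $B$ factors through $\tilde{B}$ via a morphism $\mathbb{K}[[s]] \to \mathbb{K}[[t]]$ given by $s \mapsto t^k u(t)$ with $k \geq 1$ and $u(0) \neq 0$. A direct computation on monomials shows that $w(\gamma) = k \cdot w(\tilde{\gamma})$ where $\tilde{\gamma}$ is the normalization arc, so the ray $\mathbb{R}_{\geq 0} w(\gamma)$ is unchanged by the choice. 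This also lets me reduce the remaining work to the case where $\gamma$ is the normalization, which I denote by $\rho := \mathbb{R}_{\geq 0} w(\gamma)$.

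Next I would prove $\rho \subseteq |\mathcal{F}|$ for every compactifying fan $\mathcal{F}$. Since $\gamma$ is an interior subtoric arc, its restriction to $\mathrm{Spec}(\mathbb{K}((t)))$ factors through the dense torus $T_N \subseteq \mathcal{X}_{\sigma}$, which lifts canonically to $T_N \subseteq \mathcal{X}_{\mathcal{F}}$ and lands in $B_{\mathcal{F}}$. The valuative criterion of properness, applied to the proper morphism $\pi : B_{\mathcal{F}} \to B$ and the arc $\gamma : \mathrm{Spec}(\mathbb{K}[[t]]) \to B$, produces a lift $\tilde{\gamma} : \mathrm{Spec}(\mathbb{K}[[t]]) \to B_{\mathcal{F}}$. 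The point $\tilde{\gamma}(0)$ lies in some orbit $O_{\tau}$ with $\tau \in \mathcal{F}$, equivalently in the affine chart $\mathcal{X}_{\tau}$. This means that for every $m \in \tau^{\vee} \cap M$ the composition $\chi^m \circ \tilde{\gamma}$ lies in $\mathbb{K}[[t]]$, hence has non-negative $t$-adic valuation, which by Definition~\ref{def:weightarc} translates to $w(\gamma) \cdot m \geq 0$. Thus $w(\gamma) \in \tau^{\vee\vee} = \tau \subseteq |\mathcal{F}|$, and since $|\mathcal{F}|$ is a union of cones the whole ray $\rho$ is contained in $|\mathcal{F}|$.

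Finally, I would prove the reverse inclusion by exhibiting a single compactifying fan whose support is exactly $\rho$. Take $\mathcal{F}_0 := \{0, \rho\}$; then $\mathcal{X}_{\mathcal{F}_0}$ is the union of the torus $T_N$ and the affine chart $\mathcal{X}_{\rho} \cong (\mathbb{K}^*)^{n-1} \times \mathbb{K}$, and the orbit $O_{\rho}$ is the divisor at infinity. Because $w(\gamma) \in \rho^{\circ}$, the valuation of $\gamma^*(\chi^m)$ is strictly positive for $m \in \rho^{\vee} \smallsetminus \rho^{\perp}$ and zero for $m \in \rho^{\perp}$, so $\gamma$ extends to an arc $\tilde{\gamma}$ in $\mathcal{X}_{\mathcal{F}_0}$ sending $0 \in \mathrm{Spec}(\mathbb{K}[[t]])$ to a unique point of $O_{\rho}$. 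The strict transform $B_{\mathcal{F}_0}$ is the closure of $\gamma(\mathrm{Spec}(\mathbb{K}[[t]]) \smallsetminus \{0\}) \cap T_N$ in $\mathcal{X}_{\mathcal{F}_0}$, which equals the image of $\tilde{\gamma}$; this is a germ of analytic curve with a single point over $0 \in B$, and to verify that $\pi : B_{\mathcal{F}_0} \to B$ is proper I would apply the valuative criterion: every arc of $B$ through the origin factors, up to the power map $t \mapsto t^k$, through $\gamma$, and each such reparametrized arc extends uniquely into $B_{\mathcal{F}_0}$ by the same argument. Hence $\mathcal{F}_0$ is a compactifying fan, so the compactifying cone of $B$ is contained in $\rho$; combined with the previous step this gives equality.

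The main obstacle is the properness verification in the third paragraph: one must translate the purely arc-theoretic lifting statement into properness of $\pi$ in the formal/analytic germ category. Once the valuative criterion is applied carefully to the unique branch $B$ — using that every interior subtoric arc through $0$ is a reparametrization of $\gamma$ — this becomes bookkeeping, but it is the spot where the hypothesis that $B$ is a \emph{branch} (and not merely irreducible of higher dimension) is essential.
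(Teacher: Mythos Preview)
Your argument is correct and, in fact, more complete than the paper's. The paper only sketches one direction: it lifts $\gamma$ to the affine chart $\mathcal{X}_{\mathbb{R}_{\geq 0} w(\gamma)}$, asserts that this makes the restriction $B' \to B$ proper, and deduces that any fan whose support contains the ray is compactifying. This yields the inclusion ``compactifying cone $\subseteq \mathbb{R}_{\geq 0} w(\gamma)$'' (since the ray itself is the support of a compactifying fan), but the paper never argues the reverse inclusion, nor does it spell out the independence of $\gamma$.

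Your route supplies exactly what is missing. The reparametrization argument for independence is the natural one. Your second step---lifting $\gamma$ into an arbitrary compactifying $\mathcal{X}_{\mathcal{F}}$ via the valuative criterion and reading off $w(\gamma) \in \tau$ from the regularity of the $\chi^m$ on the landing chart---is the clean way to get $\rho \subseteq |\mathcal{F}|$ for every compactifying $\mathcal{F}$, and it is absent from the paper. Your third step coincides with the paper's main idea, but you justify properness of $B_{\mathcal{F}_0} \to B$ more carefully (again via the valuative criterion and the fact that every dominant arc on a branch factors through the normalization). The net effect is that you and the paper share the same geometric picture, but you close both inclusions while the paper leaves one of them implicit.
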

 
 \begin{proof}
     Consider the affine toric variety $\mathcal{X}_{\mathbb{R}_{\geq 0} w(\gamma)}$. 
     One may check that the arc 
     $\gamma$ may be lifted to an arc $\gamma' : \mathrm{Spec}(\mathbb{K}[[t]]) \to 
            \mathcal{X}_{\mathbb{R}_{\geq 0} w(\gamma)}$ 
     which sends the origin of $\mathrm{Spec}(\mathbb{K}[[t]])$ to a point of the closed orbit 
     $O_{\mathbb{R}_{\geq 0} w(\gamma)}$ of $\mathcal{X}_{\mathbb{R}_{\geq 0} w(\gamma)}$. 
     This proves that the restriction $\pi' : B' \to B$ of the toric morphism 
     $\pi_{\mathbb{R}_{\geq 0} w(\gamma)} : \mathcal{X}_{\mathbb{R}_{\geq 0} 
            w(\gamma)} \to \mathcal{X}_{\sigma}$ 
     to the strict transform $B'$ of $B$ is proper. As a consequence, the restriction 
     $\pi : B_{\mathcal{F}} \to B$ of $\pi_{\mathcal{F}}$ to the strict transform $B_{\mathcal{F}}$ 
     of $B$ by $\pi_{\mathcal{F}}$ 
     is proper whenever $\mathbb{R}_{\geq 0} w(\gamma) \subseteq | \mathcal{F} |$. 
     
     Indeed, consider a subdivision $\mathcal{F}'$ of $\mathcal{F}$ containing 
     $\mathbb{R}_{\geq 0} w(\gamma)$ among its rays. Then $\mathcal{X}_{\mathcal{F}'}$ 
     contains the affine toric variety $\mathcal{X}_{\mathbb{R}_{\geq 0} w(\gamma)}$ 
     as a Zariski-open subset.  
     The restriction of the toric morphism $\pi_{\mathcal{F}'}$ to this subset is equal to
     $\pi_{\mathbb{R}_{\geq 0} w(\gamma)} : \mathcal{X}_{\mathbb{R}_{\geq 0} 
           w(\gamma)} \to \mathcal{X}_{\sigma}$. 
     This shows that the restriction of $\pi_{\mathcal{F}'}$ to the strict transform $B_{\mathcal{F}'}$ of 
     $B$ on $\mathcal{X}_{\mathcal{F}'}$ is proper. As $\pi_{\mathcal{F}'}$ factors 
     through $\pi_{\mathcal{F}}$, 
     this implies that $\pi : B_{\mathcal{F}} \to B$ is also proper. 
 \end{proof}
 
 Let us turn now to arbitrary interior subtoric germs. As explained in Section \ref{sect:intro}, 
 we may look at all the interior subtoric arcs contained in them and at the corresponding weight vectors: 

\begin{definition}   \label{def:arcweights}
    Let $Y \hookrightarrow (\mathcal{X}_{\sigma},0)$ be an interior subtoric germ, in the sense of    
    Definition \ref{def:subtorgerm}. An {\bf arcwise weight vector} \index{Weight vector!arcwise} 
    of $Y$ is the weight vector of an interior subtoric arc contained in $Y$, 
    in the sense of Definition \ref{def:weightarc}. 
\end{definition}

Instead of looking at the arcs $(\mathrm{Spec} (\mathbb{K}[[t]]),0) \to (\mathcal{X}_{\sigma},0)$ 
whose image is contained in $Y$, we may look dually at the formal germs of functions 
on $(\mathcal{X}_{\sigma}, 0)$ vanishing on $Y$, 
that is, at the elements of the ideal $I_Y \subseteq \mathbb{K}[[\Gamma]]$ defining $Y$ in 
$(\mathcal{X}_{\sigma}, 0)$ (see Remark \ref{rem:contmon}). Then, one may again define preferred weight 
vectors (see Definition \ref{def:initweightv}). Note first that Definition \ref{def:initform} readily generalizes 
to an arbitrary weight vector $w\in \sigma^\circ$
and a formal series $f\in \mathbb{K}[[\Gamma]]$, respectively an ideal $I$ of $\mathbb{K}[[\Gamma]]$:

\begin{definition}   \label{def:initoric}
    Consider $w\in \sigma^\circ$  and $\displaystyle{f = \sum_{m\in \  \mathrm{supp}\  f} 
           f_m\, \chi^m \in \mathbb{K}[[\Gamma]]}$.   \index{Basis!of a support}
    The {\bf basis $\boxed{\delta_w(f)} \subseteq \mathrm{supp}\  f$ 
    of $\mathrm{supp}\  f$ relative to $w$} is the locus where the restriction  
    of the linear form $w: M_{\mathbb{R}} \to \mathbb{R}$ to the support $\mathrm{supp}\  f$ 
    of $f$ achieves its minimum. The $w$-weighted homogeneous polynomial
      \[ \boxed{\mathrm{in}_{w} f} :=\sum_{m\in \ \delta_w(f)} f_m \, \chi^m   \]
   is called the {\bf $w$-initial form} of the series $f$.
   
    If $I\hookrightarrow \mathbb{K}[[\Gamma]]$ is an ideal, then its
    {\bf $w$-initial ideal} $\boxed{\mathrm{in}_w I}$ is the ideal of $\mathbb{K}[[\Gamma]]$ 
     generated by the $w$-initial  forms $\mathrm{in}_w f$, for all $f\in I$.
     \index{Initial!form of a series}   \index{Initial!ideal}
\end{definition}

Let us distinguish the weight vectors which define initial ideals without monomials 
(recall Remark \ref{rem:contmon}), generalizing the two-dimensional situation 
considered in Corollary \ref{cor:reforminit}:

\begin{definition}   \label{def:initweightv}
    Let $Y \hookrightarrow (\mathcal{X}_{\sigma},0)$ be an interior subtoric germ and 
    $I_Y \hookrightarrow \mathbb{K}[[\Gamma]]$ \index{Initial!weight vector}
    be its defining ideal. A weight vector $w \in \sigma^{\circ}$ is called an {\bf initial weight 
    vector of $Y$} if the initial ideal $\mathrm{in}_w I_Y \hookrightarrow \mathbb{K}[[\Gamma]]$ 
    does not contain monomials. 
\end{definition}

Finally, let us also define weight vectors associated to semivaluations: 

\begin{definition}   \label{def:valweight}
    Let $Y \hookrightarrow (\mathcal{X}_{\sigma},0)$ be an interior subtoric germ. 
    Let  $\nu$ be an interior  semivaluation of the ring $\mathcal{O}_{Y,0}$, 
    in the sense of Definition \ref{def:intval}. 
    We look at it as a morphism of monoids $\nu : (\mathcal{O}_Y, \cdot) \to ([0, + \infty], +)$. 
    Denote by $\boxed{r_Y} : (\Gamma, +) \to  (\mathcal{O}_Y, \cdot)$ the composition of the 
    morphism $i_{\Gamma} : (\Gamma, +) \hookrightarrow (\mathbb{K}[[\Gamma]], \cdot)$ 
    of Definition \ref{def:weightarc} and of the restriction morphism 
    $(\mathbb{K}[[\Gamma]], \cdot) \to (\mathcal{O}_Y, \cdot)$. By composing $\nu$ and $r_Y$ 
    we get a morphism of monoids 
        \[   \nu \circ r_Y : (\Gamma, +) \to  ([0, + \infty], +), \]
     that is, an element $\boxed{w(\nu)}$ of $\sigma$. 
       We call it the {\bf weight vector of the interior semivaluation $\nu$}.  
         \index{Weight vector!of an interior semivaluation}
       The weight vector of an interior semivaluation of $\nu$ is called a {\bf valuative weight 
       vector of $Y$}.   \index{Weight vector!valuative}
\end{definition}

One may check that:
  \begin{itemize}
     \item  as $\nu$ was assumed to be interior, one has $w(\nu) \in \sigma^{\circ}$;
     \item the weight vector of the semivaluation associated 
to an interior arc $\gamma$ of $Y$ as explained in Example \ref{ex:arcsemival} 
is equal to the weight vector $w(\gamma)$ of $\gamma$ in the sense of Definition \ref{def:weightarc}. 
   \end{itemize}

\medskip
\section{Four viewpoints on local tropicalization}  \label{sect:coincid}
\medskip

This section presents the main result of this paper, Theorem \ref{thm:coincidthm}, 
stating that given an interior subtoric germ, four subsets of the real weight space 
constructed using the notions introduced in Section \ref{sect:weightsofarcsandvals} coincide. 
This provides four different interpretations of the {\em local tropicalization} of an interior 
subtoric germ (see Definition \ref{def:loctrop}). We conclude the section with a structure 
result about local tropicalizations (see Theorem \ref{thm:structloctrop}).
\medskip

In the sequel we will use the following notion: 

\begin{definition}   \label{def:coneclosure}
   Let $\Sigma$ be a subset of a finite-dimensional real vector space $V$. Its {\bf cone-closure} 
   is the closure $\boxed{\overline{\mathbb{R}_{\geq 0} \Sigma}}$ inside $V$ of the non-negative cone 
   $\boxed{\mathbb{R}_{\geq 0} \Sigma}$ over $\Sigma$,   \index{Cone-closure}
   consisting of all the rays $\mathbb{R}_{\geq 0} w$ generated by the elements $w$ of $\Sigma$. 
 \end{definition}

The following result confirms that for each interior subtoric germ, the cone-closures of 
four different subsets of the weight cone of the ambient affine toric variety agree: 

\begin{theorem}   \label{thm:coincidthm}
    Let $Y \hookrightarrow (\mathcal{X}_{\sigma},0)$ be an interior subtoric germ, 
    in the sense of Definition \ref{def:subtorgerm}. Then, the following subsets of the 
    weight cone $\sigma \subset N_{\mathbb{R}}$ coincide: 
      \begin{enumerate}
           \item  \label{eq:compactcone}
               the compactifying cone of $Y$, in the sense of Definition \ref{def:compactifcone};
           \item \label{eq:closarcwise}
               the cone-closure of the set of arcwise weight vectors of $Y$, in the sense of 
                Definition \ref{def:arcweights}; 
           \item    \label{eq:closinitw}
                the cone-closure of the set of initial weight vectors of $Y$, in the sense of 
                Definition \ref{def:initweightv}; 
           \item    \label{eq:closvalweight}
                the cone-closure of the set of valuative weight vectors of $Y$, in the sense of 
                Definition \ref{def:valweight}. 
       \end{enumerate} 
\end{theorem}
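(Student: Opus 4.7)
The plan is to establish a cyclic chain of inclusions
\[
  (2)\ \subseteq\ (4)\ \subseteq\ (3)\ \subseteq\ (1)\ \subseteq\ (2),
\]
where the numbering refers to the four sets in the theorem. The first inclusion is immediate from the concluding observation of Section~\ref{sect:weightsofarcsandvals}: the semivaluation $\nu_{t}\circ\gamma^{*}$ attached to an interior subtoric arc $\gamma$ is an interior semivaluation whose weight vector coincides with $w(\gamma)$, so every arcwise weight vector is a valuative weight vector; passing to cone-closures preserves the inclusion.

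For $(4)\subseteq(3)$, let $\nu$ be an interior semivaluation of $\mathcal{O}_{Y,0}$ with weight vector $w=w(\nu)\in\sigma^{\circ}$, and extend it to $\bar{\nu}\colon\mathbb{K}[[\Gamma]]\to[0,+\infty]$ through the quotient, so that $\bar{\nu}(\chi^{m})=w\cdot m$ and $\bar{\nu}(f)=+\infty$ exactly when $f\in I_{Y}$. I claim $w$ itself is an initial weight vector. Indeed, if some monomial $\chi^{m_{0}}$ belonged to $\mathrm{in}_{w}I_{Y}$, writing $\chi^{m_{0}}=\sum_{i} h_{i}\,\mathrm{in}_{w}f_{i}$ with $f_{i}\in I_{Y}$ and projecting onto the $w$-graded piece of weight $w\cdot m_{0}$ would produce an element $F:=\sum_{i} h_{i}f_{i}\in I_{Y}$ with $\mathrm{in}_{w}F=\chi^{m_{0}}$. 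The ultrametric inequality applied to $F$ then forces $\bar{\nu}(F)=w\cdot m_{0}<+\infty$, contradicting $F\in I_{Y}$.

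The inclusion $(3)\subseteq(1)$ is the geometric heart. Let $w\in\sigma^{\circ}$ be initial and $\mathcal{F}$ a compactifying fan; suppose for contradiction $w\notin|\mathcal{F}|$. Refine $\mathcal{F}$ to a fan $\mathcal{F}'$ having $\mathbb{R}_{\geq 0}w$ as a ray, so that $\mathcal{X}_{\mathbb{R}_{\geq 0}w}$ is an open chart of $\mathcal{X}_{\mathcal{F}'}$ disjoint from $\mathcal{X}_{\mathcal{F}}$. A local analogue of Tevelev's theorem identifies $Y_{\mathcal{F}'}\cap O_{\mathbb{R}_{\geq 0}w}$ with the vanishing locus of $\mathrm{in}_{w}I_{Y}$ inside the codimension-one torus orbit $O_{\mathbb{R}_{\geq 0}w}$, and this locus is non-empty by the no-monomial hypothesis (cf.~Remark~\ref{rem:contmon}). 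A formal arc of $Y_{\mathcal{F}'}$ through such a point transverse to the orbit descends, via $\pi_{\mathcal{F}'}$, to an arc of $Y$ that cannot lift to $Y_{\mathcal{F}}$, since its special point lies in $\mathcal{X}_{\mathcal{F}'}\smallsetminus\mathcal{X}_{\mathcal{F}}$; this violates the valuative criterion of properness for $\pi\colon Y_{\mathcal{F}}\to Y$. Hence $w\in|\mathcal{F}|$ for every compactifying fan, and $w$ lies in the compactifying cone.

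For the closing inclusion $(1)\subseteq(2)$, I approximate $w$ in the compactifying cone by rational rays $\mathbb{R}_{\geq 0}w'$ lying inside it (using that the compactifying cone is a finite union of rational polyhedral cones, a structural fact to be invoked separately). For each such $w'$, pick a fan $\mathcal{F}$ with support $\sigma$ having $\mathbb{R}_{\geq 0}w'$ among its rays; then $\pi_{\mathcal{F}}$ is proper, and the same local Tevelev-type lemma shows $Y_{\mathcal{F}}$ meets $O_{\mathbb{R}_{\geq 0}w'}$. A smooth formal arc of $Y_{\mathcal{F}}$ through an intersection point transverse to the orbit pushes down to an interior subtoric arc of $Y$ whose weight lies on $\mathbb{R}_{\geq 0}w'$; letting $w'\to w$ closes the chain. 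The main obstacle is precisely the local Tevelev-type lemma relating $Y_{\mathcal{F}'}\cap O_{\mathbb{R}_{\geq 0}w}$ to the vanishing locus of $\mathrm{in}_{w}I_{Y}$: its proof requires a careful comparison between the localization of $\mathbb{K}[[\Gamma]]$ along the orbit and the $w$-graded algebra $\mathrm{gr}_{w}\mathbb{K}[[\Gamma]]$, combined with a flatness argument at the central fibre of the degeneration defined by the one-parameter subgroup associated to $w$.
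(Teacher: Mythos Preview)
Your cyclic chain is a natural strategy, and the steps $(2)\subseteq(4)$, $(4)\subseteq(3)$ and $(3)\subseteq(1)$ are essentially sound. There is one small gap in $(4)\subseteq(3)$: to deduce $\bar\nu(F)=w\cdot m_{0}$ you need $\bar\nu(G)>w\cdot m_{0}$ for the higher-weight tail $G$, and the ultrametric inequality alone does not give this for an infinite series. This can be repaired by noting that the monomial ideal $\{h:\nu_{w}(h)\geq c\}$ is finitely generated (Noetherianity of $\mathbb{K}[[\Gamma]]$), so $G$ is a finite $\mathbb{K}[[\Gamma]]$-combination of monomials of $w$-weight $>w\cdot m_{0}$, whence $\bar\nu(G)>w\cdot m_{0}$ after all.

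The real problem is the closing step $(1)\subseteq(2)$, which is circular as written. To produce an arc with weight on $\mathbb{R}_{\geq 0}w'$ you invoke the Tevelev-type lemma to get $Y_{\mathcal{F}}\cap O_{\mathbb{R}_{\geq 0}w'}\neq\emptyset$; but that lemma says this intersection is nonempty exactly when $w'$ is an \emph{initial} weight vector, whereas you only know $w'$ lies in the compactifying cone. Knowing that points of the compactifying cone are initial is precisely $(1)\subseteq(3)$, which in your chain is supposed to \emph{follow} from $(1)\subseteq(2)$. Similarly, the ``structural fact'' that the compactifying cone is a finite union of rational cones is, in this paper, a \emph{consequence} of the coincidence theorem (see Theorem~\ref{thm:structloctrop}), not an independent input. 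The paper sidesteps both issues by abandoning the cyclic scheme: it proves directly the two-sided equivalence $O_{\tau}\cap Y_{\Sigma}\neq\emptyset \Longleftrightarrow \tau^{\circ}\cap\mathfrak{W}(Y)\neq\emptyset$ for every cone $\tau$ of a complete subdivision $\Sigma$ of $\sigma$, and reads off both $(1)=(3)$ and $(2)=(3)$ from it. The forward implication is obtained by choosing an arc through a point of $O_{\tau}\cap Y_{\Sigma}$ (this is where your argument for $(3)\subseteq(1)$ lives), and the backward implication by refining $\Sigma$ along the ray $\mathbb{R}_{\geq 0}w$. If you want to keep the cyclic format, a non-circular proof of $(1)\subseteq(2)$ goes by contraposition: given $w$ outside the cone-closure of arcwise weights, pick a subdivision $\Sigma$ of $\sigma$ with a cone $\tau$ satisfying $w\in\tau^{\circ}$ and $\tau^{\circ}$ disjoint from all arcwise weights; then $Y_{\Sigma}\cap O_{\tau}=\emptyset$, and the subfan of $\Sigma$ generated by those cones whose orbits meet $Y_{\Sigma}$ is a compactifying fan whose support misses $w$.
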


\begin{proof} $\ $ 
   
   $\bullet$ When the field $\mathbb{K}$ has characteristic $0$,  the equality of the sets 
      defined in {\em \ref{eq:closarcwise}.} and {\em \ref{eq:closinitw}.} is a consequence of 
      \cite[Corollary 4.3]{S 17}.   Here we present a sketch of a different proof valid 
      in arbitrary characteristic. In fact, more is true: both starting cones agree, not only their 
      cone-closures. Let us explain this fact. Fix a ray $\lambda \subset \sigma$ 
      which is spanned by an element of $\sigma^{\circ} \cap N$. One proves that 
      each of the following statements is  equivalent to the next one:
      \begin{enumerate}[label=\alph*)]
         \item there exists an arcwise weight vector $w$ of $Y$ spanning $\lambda$;
         \item   \label{exsubtoric} 
              there exists an interior subtoric arc of $Y$ whose strict transform 
              on the affine toric variety $\mathcal{X}_{\lambda}$ intersects its unique 
              closed orbit $O_{\lambda}$; 
          \item    \label{nempty} 
               the strict transform $Y_{\lambda}$ of $Y$ on $\mathcal{X}_{\lambda}$ is such that 
               $Y_{\lambda} \cap O_{\lambda}$ is non-empty; 
          \item     \label{inclinit}
               $\lambda \smallsetminus \{0\}$ is included in the set of initial weight vectors of $Y$. 
      \end{enumerate}

 Therefore, the non-negative cone over the set of arcwise weight vectors of $Y$ coincides with the 
 non-negative cone over the set of initial weight vectors of $Y$. 
 
 The equivalence  \ref{exsubtoric} $\Longleftrightarrow$  \ref{nempty} uses the fact 
 that in any characteristic,  
 branches admit parametrizations, that is, are images of arcs (see 
 \cite[Section 4.1]{C 04} for an argument using their resolution by blowups and \cite[Section 1.3.2]{CC 05} 
 for an argument using the regularity of their normalizations).  
   
  The equivalence \ref{nempty} $\Longleftrightarrow$ \ref{inclinit} results from the fact that if 
  $w \in N$ spans the ray $\lambda$, then the initial ideal 
  $\mathrm{in}_w I_Y$ of the defining ideal $I_Y \subset \mathbb{K}[[\Gamma]]$ of $Y$, seen 
  as an ideal of $\mathbb{K}[M]$ instead of $\mathbb{K}[[\Gamma]]$ as in Definition \ref{def:initoric}, 
  defines a subvariety of the dense torus $T_N$ whose closure in $\mathcal{X}_{\lambda}$ 
  intersects  $O_{\lambda}$ at a point of the intersection $Y_{\lambda} \cap O_{\lambda}$.

     \medskip
     $\bullet$  The equality of the sets defined in {\em \ref{eq:compactcone}.} and 
        {\em \ref{eq:closinitw}.} is a consequence of \cite[Proposition 3.19]{CPS 24}. 
        Let us explain the main ideas of the proof, generalizing again the situation from
    the case $\mathbb{K}=\mathbb{C}$ of \cite{CPS 24} to the case of arbitrary field $\mathbb{K}$.
     Denote by $\mathcal{F}$ a fan whose support is contained 
     in $\sigma$ and by $\boxed{\mathfrak{W}(Y)}$
     the set of initial weight vectors of $Y$ in the sense of Definition \ref{def:initweightv}. Choose a
     fan $\Sigma$ subdividing the cone $\sigma$ and such that $\mathcal{F} \subseteq \Sigma$. 
     Then, we get the following commutative diagram
     \[   \xymatrix{
              Y_\Sigma \ar@{^{(}->}[r] & \mathcal{X}_{\Sigma} \ar[rd]_{\pi_\Sigma} & 
              & \mathcal{X}_{\mathcal{F}} \ar@{_{(}->}[ll] \ar[ld]^{\pi_{\mathcal{F}}} & 
                     Y_{\mathcal{F}} \ar@{_{(}->}[l] \ar[ld]^{\pi} \\
             & & \mathcal{X}_{\sigma} & Y \ar@{_{(}->}[l]}
      \]
     where $Y_{\mathcal{F}}$ and $Y_\Sigma$ are the strict transforms of 
     $Y$ in $\mathcal{X}_{\mathcal{F}}$ and $\mathcal{X}_\Sigma$
     respectively, $\pi_{\mathcal{F}}$ and $\pi_\Sigma$ are the natural toric birational morphisms, 
     and $\pi$ is
     the restriction of $\pi_{\mathcal{F}}$ to $Y_{\mathcal{F}}$. Note that $\pi$ is {\em proper} 
     (that is, $\mathcal{F}$ is a compactifying fan of $Y$) if and only if
     $Y_\Sigma$ is contained in $\mathcal{X}_{\mathcal{F}}$. 
     
     The equality of the sets defined in {\em \ref{eq:compactcone}.} and 
     {\em \ref{eq:closinitw}.} 
     is therefore a consequence of the following equivalence, for every
     cone $\tau$ of $\Sigma$:
     \begin{equation}   \label{eq:equivtoric}
          O_\tau \cap Y_\Sigma\neq \emptyset \Longleftrightarrow
                  \tau^{\circ}\cap \mathfrak{W}(Y)\neq \emptyset,
     \end{equation}
     where $O_\tau$ is the corresponding toric orbit of $\mathcal{X}_{\Sigma}$.

     {\bf Let us first prove the implication $\Longrightarrow$ of \eqref{eq:equivtoric}.}  
     Choose $y_0\in O_\tau\cap Y_\Sigma$.  
     Let $\gamma : \mathrm{Spec}(\mathbb{K}[[t]]) \to (Y_{\Sigma},y_0)$ 
     be an arc passing through $y_0$.  
     Denote by $w \in N$ the weight vector (in the sense of Definition \ref{def:weightarc}) 
     of the interior subtoric arc $\pi_{\Sigma} \circ \gamma : \mathrm{Spec}(\mathbb{K}[[t]]) \to Y$.    
     The fact that $y_0\in O_\tau\cap Y_\Sigma$ means that the weight vector $w$
     lies in $\tau^{\circ}\cap N \subseteq \tau^{\circ}$. To show that $w\in \mathfrak{W}(Y)$, we let  
     $B$ be the branch contained in $Y$ which is the image of the arc 
     $\pi_{\Sigma} \circ \gamma$. Since $B\subseteq Y$, the ideal $I_Y \subset \mathbb{K}[[\Gamma]$ 
     of $Y$ is contained in the ideal $I_B \subset \mathbb{K}[[\Gamma]$ of $B$. Thus, 
     $\mathfrak{W}(B)\subseteq \mathfrak{W}(Y)$. But, by arguments similar to those used 
     in the proof of Proposition \ref{prop:equivintbr}, we see that $w \in \mathfrak{W}(B)$.
     
     {\bf Let us now prove the implication $\Longleftarrow$ of \eqref{eq:equivtoric}.}  
     Consider a primitive lattice vector $w\in \tau^{\circ}\cap \mathfrak{W}(Y)$ and 
      a refinement $\Sigma_w$ of $\Sigma$ such that 
      the ray $\lambda_w :=\mathbb{R}_{\geq 0}\cdot w$ is a cone of the fan $\Sigma_w$. By construction, 
      the orbit $O_{\lambda_w}$ is mapped via the toric morphism $\pi_w \colon 
      \mathcal{X}_{\Sigma_w}\to \mathcal{X}_{\Sigma}$ to
      the orbit $O_\tau$. The intersection of the strict transform $Y_{\Sigma_w}$ of $Y$ in 
      $ \mathcal{X}_{\Sigma_w}$ with the orbit $O_{\lambda_w}$ is 
      determined by the $w$-initial ideal $\mathrm{in}_w {I(Y)}$ of the ideal $I(Y)$ defining $Y$, 
      viewed in the Laurent polynomial ring $\mathbb{K}[M]$. As $w\in \mathfrak{W}(Y)$, 
      this initial ideal is monomial free.  
      Therefore, one has $Y_{\Sigma_w} \cap O_{\lambda_w}  \neq \emptyset$. 
      Since $O_{\tau_w} \subset \mathcal{X}_{\Sigma_w}$, the map $\pi_w$ ensures that
      $Y_\Sigma\cap O_\tau \neq \emptyset$ as well.
    
    \medskip
   $\bullet$ The equality of the sets defined in {\em \ref{eq:closinitw}.} and 
   {\em \ref{eq:closvalweight}.} is a 
   consequence of \cite[Theorem 11.2]{PS 13}. In fact, the inclusion of the set of 
   {\em \ref{eq:closvalweight}.} 
   inside the set of {\em \ref{eq:closinitw}.} follows from essentially the same argument as described before
   Theorem \ref{thm:pqpair}. Namely, if $f\in I_Y \subset \mathbb{K}[[\Gamma]]$ 
   is a series of the defining ideal of $Y$ and 
   $\nu\colon \mathbb{K}[[\Gamma]]/I_Y \to \mathbb{R}\cup \{+\infty\}$ 
   is an interior semivaluation, then $\nu$ lifts to an interior semivaluation $\nu'$ of 
   $\mathbb{K}[[\Gamma]]$ 
   which necessarily takes value $+\infty$ on $f$. Since
   $\nu'$ does not take infinite value on any monomial 
   $\chi^m$ with $m \in \Gamma$, it follows that $f$ must involve
   at least two monomials where $\nu'$ attains its minimal value. Thus $\mathrm{in}_w(f)$, 
   where $w$ is the
   weight vector corresponding to $\nu'$, is not a monomial. The inverse inclusion is based on subtler
   results from the theory of valuations, see \cite[Theorem 11.2]{PS 13} and references therein. 
\end{proof}

Theorem \ref{thm:coincidthm} provides four equivalent definitions of the {\em local tropicalization} of an interior subtoric germ:  

\begin{definition}    \label{def:loctrop}
     Let $Y \hookrightarrow (\mathcal{X}_{\sigma},0)$ be an interior subtoric germ, 
     in the sense of Definition \ref{def:subtorgerm}. 
     The set defined by any of the four equivalent conditions of Theorem \ref{thm:coincidthm} is called
     the {\bf local tropicalization} of the subtoric germ $Y$ and is denoted by 
     $\boxed{\mathrm{Trop}_{\mathrm{loc}} Y} \subset \sigma$.  \index{Local tropicalization}
     \index{Tropicalization!local}
\end{definition}

The local tropicalization $\mathrm{Trop}_{\mathrm{loc}} Y$ depends strongly on the embedding  
$Y \hookrightarrow (\mathcal{X}_{\sigma},0)$ (think, for instance, about the various 
embeddings $t \mapsto (t, t^n)$ of a smooth branch inside $(\mathbb{K}^2,0)$, 
whose local tropicalizations are, by Proposition \ref{prop:equivintbr}, the rays spanned 
by $(1,n)$). Even if we drop it from the notation for simplicity, 
one has to keep this dependence  in mind. 

   Each of the four viewpoints on local tropicalization presented in Theorem \ref{thm:coincidthm} 
   is adapted to particular contexts. For instance:
   \begin{itemize}
      \item  viewpoint {\em \ref{eq:compactcone}.} is useful when one wants to construct modifications of 
           $Y$ starting from a particular embedding in a germ of toric variety; 
      \item viewpoint {\em \ref{eq:closarcwise}.} is useful when one wants to think in terms 
          of the arc space of the germ $Y$; 
      \item viewpoint {\em \ref{eq:closinitw}.} is useful when one deals with concrete defining 
          equations of $Y$, as in our work \cite{CPS 24} discussed in Section \ref{sec:splicetype}; 
      \item viewpoint {\em \ref{eq:closvalweight}.} is useful when one wants to think in terms of the space 
          of semivaluations associated to $Y$; for instance, our most general definitions of tropicalization 
          given in \cite[Section 6]{PS 13} used this space.
   \end{itemize}

   In fact, in our paper \cite{PS 13} we introduced two notions of local tropicalization, a 
   {\em positive} and a {\em nonnegative} one (see \cite[Definitions 6.6 and 6.7]{PS 13}), both being 
   {\em extended} local tropicalizations in the sense explained in Section \ref{sect:variants}. 
   For simplicity, we preferred to take here as definition of local tropicalization only the intersection of 
   the nonnegative local tropicalization of \cite{PS 13} with the cone $\sigma$.

   \begin{remark}
       Combining  Proposition \ref{prop:equivintbr} with Theorem \ref{thm:coincidthm}, we see 
       that the local tropicalization of an interior subtoric branch of $(\mathcal{X}_{\sigma}, 0)$ 
       is the ray spanned by 
       the weight vector of any arc parametrizing the branch. Maurer had introduced 
       such weight vectors in his 1980 paper \cite{M 80} for curve singularities 
       $Y \hookrightarrow (\mathbb{C}^n,0)$ with several branches, 
       under the name of {\em critical tropisms}. For this reason, we may 
       see Maurer's paper as a precursor of viewpoint  {\em \ref{eq:closarcwise}.} on local tropicalization. 
       The semantic proximity of the noun {\em tropism}   \index{Tropism!critical}  \index{Critical tropism}
       with the adjective {\em tropical}  is interesting. Lejeune-Jalabert and Teissier had 
       already used the expression {\em tropisme critique} in \cite{LT 73} 
       (see \cite[end of Section 1.4.5]{GGP 20}). 
   \end{remark}

   Viewpoint {\em \ref{eq:closarcwise}.} implies that the local tropicalization of the union 
   of two interior subtoric germs of $(\mathcal{X}_{\sigma},0)$ is the union of their two local tropicalizations. 
   Therefore, every local tropicalization is a finite union of local tropicalizations of irreducible 
   interior subtoric germs. Our main structure theorem about the local tropicalizations of such germs 
   becomes (see \cite[Theorem 11.9]{PS 13}, \cite[Proposition 3.11]{CPS 24}): 

\begin{theorem}   \label{thm:structloctrop}
     Let $Y \hookrightarrow (\mathcal{X}_{\sigma},0)$ be an irreducible interior subtoric germ 
     with defining ideal $I_Y  \subset   \mathbb{K}[[\Gamma]]$. Then, the local tropicalization 
     $\mathrm{Trop}_{\mathrm{loc}} Y$ is the support of a fan $\mathcal{F}$ which satisfies 
     the following conditions:
\begin{itemize}
   \item the dimension of each maximal cone of $\mathcal{F}$ is equal to the dimension of $Y$;
   \item the maximal cones of $\mathcal{F}$ all have non-empty intersections with $\sigma^\circ$;
   \item given any cone $\tau$ of $\mathcal{F}$, the $w$-initial ideal of $I_Y$ is independent 
       of the choice of $w\in\tau^\circ$.
\end{itemize}
\end{theorem}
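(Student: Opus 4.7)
The plan is to construct $\mathcal{F}$ as the local Gröbner fan of $I_Y$, restricted to $\mathrm{Trop}_{\mathrm{loc}} Y$, using viewpoint \ref{eq:closinitw} of Theorem \ref{thm:coincidthm}. On $\sigma^\circ$, introduce the equivalence relation $w \sim w'$ iff $\mathrm{in}_w I_Y = \mathrm{in}_{w'} I_Y$ in the sense of Definition \ref{def:initoric}. A Gröbner-theoretic argument, truncating $I_Y$ along each graded piece in $\Gamma$ to reduce the computation of initial forms to finitely many monomials and invoking the resulting semicontinuity of $\mathrm{in}_w$ in $w$, shows that each equivalence class is the relative interior of a rational polyhedral subcone of $\sigma$, and that the closures of these classes glue into a rational polyhedral fan $\Phi$ subdividing $\sigma$. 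Define $\mathcal{F}$ to be the subfan of $\Phi$ consisting of those cones $\tau$ for which $\mathrm{in}_w I_Y$ is monomial-free for every $w \in \tau^\circ$. Then the equality $|\mathcal{F}| = \mathrm{Trop}_{\mathrm{loc}} Y$ is immediate from Theorem \ref{thm:coincidthm}.

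The third bullet of the statement is built into the very construction of $\mathcal{F}$, and the second bullet follows because each cone of $\mathcal{F}$ has, by construction, relative interior meeting $\sigma^\circ$; so does the relative interior of any maximal cone containing it.

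The main obstacle is the first bullet: showing that every maximal cone $\tau$ of $\mathcal{F}$ has dimension exactly $\dim Y$. For $w$ in the relative interior of such a $\tau$, the initial ideal $\mathrm{in}_w I_Y$, viewed as an ideal of the Laurent polynomial ring $\mathbb{K}[M]$, defines a $T_N$-invariant subvariety $V_w$ of $T_N$ that is constant as $w$ varies in $\tau^\circ$. The idea is to apply the Bieri--Groves theorem to each irreducible component of $V_w$: its global tropicalization is a pure polyhedral complex of dimension $\dim Y$, and one must argue that the strata of this complex passing through $w$ correspond precisely to $\tau$, forcing $\dim \tau = \dim Y$. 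Making this rigorous is the delicate step, since $Y$ is only a formal germ: one either algebraizes $I_Y$ through an Artin-type approximation and invokes global tropicalization, or one works directly through the valuation-theoretic viewpoint \ref{eq:closvalweight} and exhibits, for each irreducible component of $V_w$, enough semivaluations of $\mathcal{O}_{Y,0}$ with weight vector in $\tau^\circ$ to realize the required dimension. The latter route is the one followed in \cite[Theorem 11.9]{PS 13} and \cite[Proposition 3.11]{CPS 24}, relying on the structure of the Riemann--Zariski space of $\mathcal{O}_{Y,0}$.
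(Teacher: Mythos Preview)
The paper does not actually prove this theorem: it merely states the result and refers to \cite[Theorem 11.9]{PS 13} and \cite[Proposition 3.11]{CPS 24}. So there is no in-paper proof to compare against; your sketch is being measured against those external sources, which you yourself invoke at the end.

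Your outline follows the standard route and is structurally sound: build a Gr\"obner-type fan $\Phi$ on $\sigma$, take the subfan where the initial ideal is monomial-free, and then establish pure-dimensionality. Two points deserve tightening. First, the existence of a finite rational fan $\Phi$ subdividing $\sigma$ on whose open cones $\mathrm{in}_w I_Y$ is constant is \emph{not} automatic for ideals of $\mathbb{K}[[\Gamma]]$; it is classical for polynomial ideals, and your ``truncation'' remark gestures at the right reduction but is not itself an argument. This finiteness is where \cite{PS 13} does genuine work. Second, your justification of the second bullet is imprecise: it is not true that \emph{every} cone of $\mathcal{F}$ has relative interior meeting $\sigma^\circ$, since faces lying in $\partial\sigma$ can occur in $\mathcal{F}$. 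What is true, and what you should say, is that every \emph{maximal} cone of $\mathcal{F}$ is the closure of a Gr\"obner equivalence class in $\sigma^\circ$, hence meets $\sigma^\circ$.

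On the dimension bullet you correctly isolate the crux and correctly name the two standard strategies (Bieri--Groves after algebraization, or direct valuation-theoretic arguments). Your sketch of why $\dim\tau=\dim Y$ is not yet a proof: one still needs that $\dim V_w=\dim Y$ (a flat-degeneration statement) and that the star of $\tau$ in the global tropicalization of $V_w$ recovers $\tau$ itself. You flag this honestly as the delicate step and defer to the same references the paper cites, which is exactly the paper's own stance.
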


Viewpoint {\em \ref{eq:closinitw}.} implies that:

\begin{proposition}
    The local tropicalization of an interior subtoric germ is equal to the intersection 
    of the local tropicalizations of the interior principal effective divisors containing it. 
\end{proposition}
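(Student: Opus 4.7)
I would work with the initial-weight-vector viewpoint \emph{\ref{eq:closinitw}.} of Theorem \ref{thm:coincidthm}: for $w \in \sigma^{\circ}$, membership in $\mathrm{Trop}_{\mathrm{loc}} Y$ is equivalent to $\mathrm{in}_w I_Y$ being monomial-free, while membership in $\mathrm{Trop}_{\mathrm{loc}} Z(f)$ is equivalent to $\mathrm{in}_w f$ not being a monomial. Since by Theorem \ref{thm:structloctrop} both local tropicalizations are supports of rational polyhedral fans whose maximal cones meet $\sigma^{\circ}$, it suffices to verify the asserted equality on $\sigma^{\circ}$ and then pass to cone-closures.

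The inclusion $\mathrm{Trop}_{\mathrm{loc}} Y \subseteq \bigcap_f \mathrm{Trop}_{\mathrm{loc}} Z(f)$ is immediate from this characterization: if $\mathrm{in}_w I_Y$ contains no monomial, then $\mathrm{in}_w f \in \mathrm{in}_w I_Y$ cannot be a monomial for any $f \in I_Y$, in particular for any $f$ with $Z(f)$ interior. For the reverse inclusion I plan to argue contrapositively: given $w \in \sigma^{\circ}$ such that $\mathrm{in}_w I_Y$ contains a monomial $\chi^m$, I will produce a single $f \in I_Y$ defining an interior hypersurface with $\mathrm{in}_w f$ equal to a monomial.

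I would start from a presentation $\chi^m = \sum_i h_i \, \mathrm{in}_w f_i$ with $f_i \in I_Y$ and $h_i \in \mathbb{K}[[\Gamma]]$, and replace each $h_i$ by its $w$-homogeneous component of the appropriate weight, which is possible because $\mathbb{K}[[\Gamma]]$ decomposes uniquely into $w$-homogeneous parts. The combination $f_0 := \sum_i h_i f_i$ then lies in $I_Y$; moreover each summand $h_i f_i$ has minimal $w$-weight exactly $w \cdot m$, so $\mathrm{in}_w f_0 = \chi^m$. If $f_0$ still admits a nontrivial monomial factor $\chi^{m'}$, I would strip it off: because $Y$ is reduced and interior, the monomial $\chi^{m'}$ is a non-zero-divisor on $\mathcal{O}_{Y,0}$, so the quotient $f_0/\chi^{m'}$ remains in $I_Y$, and its $w$-initial form is the monomial $\chi^{m-m'}$. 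Iterating this extraction produces an ascending chain of principal ideals in $\mathbb{K}[[\Gamma]]$, which stabilizes by Noetherianity; stabilization forces the next extracted monomial to be trivial, and so yields the desired $f \in I_Y$ with no monomial factor (hence $Z(f)$ interior) and $\mathrm{in}_w f$ still a monomial.

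The main obstacle I anticipate is precisely this monomial-stripping procedure: keeping each division inside $I_Y$ rests crucially on the interior and reducedness hypotheses (ensuring that nontrivial monomials are non-zero-divisors modulo $I_Y$), and the termination argument is a little delicate because $\mathbb{K}[[\Gamma]]$ is generally not a unique factorization domain for arbitrary strongly convex rational $\sigma$; Noetherianity of $\mathbb{K}[[\Gamma]]$, which follows from finite generation of $\sigma^{\vee} \cap M$, should suffice to close this gap. Once the monomial-free $f$ is constructed, passing from agreement of both sides on $\sigma^{\circ}$ to the full equality of cone-closures is immediate from the rational-polyhedral fan structure provided by Theorem \ref{thm:structloctrop}.
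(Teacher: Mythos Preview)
Your overall strategy matches the paper's: the proposition is stated there as a direct consequence of viewpoint~\emph{\ref{eq:closinitw}.}, without further detail, and your construction of an $f_0 \in I_Y$ with $\mathrm{in}_w f_0 = \chi^m$ from a monomial in $\mathrm{in}_w I_Y$ is the correct key step.

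There is, however, a genuine gap in the final monomial-stripping phase. Your parenthetical ``(hence $Z(f)$ interior)'' asserts that an $f$ with no nontrivial monomial factor defines an interior hypersurface, but this fails when $\sigma$ is not regular. Interiority of $Z(f)$ means that $f$ lies in none of the height-one toric primes $P_\rho \subset \mathbb{K}[[\Gamma]]$ attached to the rays $\rho$ of $\sigma$; equivalently, for each ray $\rho$ the support of $f$ must meet the facet $\rho^{\perp}\cap\sigma^{\vee}$. When $\mathbb{K}[[\Gamma]]$ is not factorial these primes $P_\rho$ are not principal, and $f$ can lie in some $P_\rho$ without admitting any monomial factor. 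A concrete instance: for the $A_1$-cone $\sigma=\mathbb{R}_{\geq 0}\langle(1,0),(1,2)\rangle$ one has $\Gamma$ generated by $(0,1),(1,0),(2,-1)$, and the element $\chi^{(1,0)}+\chi^{(2,-1)}$ has no nontrivial monomial factor yet lies in $P_{\mathbb{R}_{\geq 0}(1,0)}$, so its zero locus has a boundary component. A secondary issue is that even once $f\notin\bigcup_\rho P_\rho$, the scheme $Z(f)$ need not be reduced, and in a non-UFD its reduction need not be principal; the paper's phrase ``interior principal effective divisor'' (Definition~\ref{def:subtorgerm}) formally requires reducedness.

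Both obstacles vanish when $\sigma$ is regular, where $\mathbb{K}[[\Gamma]]$ is a regular local ring and hence a UFD. For general $\sigma$, one way to close the gap is to replace monomial-stripping by a perturbation: since $Y$ is interior, $I_Y\not\subseteq P_\rho$ for every ray $\rho$, so one can add to $f_0$ an element of $I_Y$ of $w$-order strictly greater than $w\cdot m$ (leaving $\mathrm{in}_w$ unchanged) chosen via prime avoidance so that the resulting series lies outside every $P_\rho$.
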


This fact, which is a local version of one of the definitions of global tropicalization of 
a subvariety of an algebraic torus (see \cite[Definition 3.2.1]{MS 15}), 
explains the importance of understanding 
the local tropicalizations of effective principal  divisors on $(\mathcal{X}_{\sigma}, 0)$. 
Next section is dedicated to this topic.

\medskip
\section{The local tropicalization of an effective principal subtoric divisor}  \label{sect:localtropprinc}
\medskip

In this section we describe the local tropicalizations of the interior subtoric germs which are  
{\em effective principal divisors}. They are determined by the Newton polyhedra of their defining series 
in a way which generalizes the case of plane curve singularities described in Corollary 
\ref{cor:arcweightplanecurve} (see Theorem \ref{thm:divloctrop}). 
\medskip

We continue using the notations of Section \ref{sect:notations}. 
We assume that the interior subtoric germ $Y \hookrightarrow (\mathcal{X}_{\sigma},0)$ 
is an effective principal divisor on $\mathcal{X}_{\sigma}$. Therefore $Y = Z(f)$,
where $f\in \mathbb{K}[[\Gamma]]$. The fact that no irreducible component of $Y$ is contained in the
toric boundary $\partial \mathcal{X}_{\sigma}$ is equivalent to the condition that the series $f$ 
is not contained in any of the ideals
of $\mathbb{K}[[\Gamma]]$ generated by the sets of monomials 
$(\sigma^\vee \smallsetminus \tau^\perp) \cap M$, 
where $\tau$ is an edge of $\sigma$. For example, if $\sigma$ is the positive
orthant $\mathbb{R}_{\geq 0}^{n}$, which allows to write $\mathbb{K}[[\Gamma]]=\mathbb{K}[[x_1,\dots,x_n]]$, 
this simply says that $f$ is not divisible by any of the variables $x_1,\ldots,x_n$. 

The following definition is a straightforward generalization of Definition \ref{def:NP}:

\begin{definition}  \label{def:NPtoric}
   Let $\displaystyle{f =\sum_{m\in \ \mathrm{supp}\  f} f_m \chi^m \in \mathbb{K}[[\Gamma]]}$ 
   (therefore $f_m\in \mathbb{K}^*$ 
   for every $m \in \mathrm{supp}\  f$).  \index{Newton!polyhedron}
   Its  {\bf Newton polyhedron} $\boxed{\mathcal{N}(f)} \subseteq \sigma^{\vee}$ is 
   the convex hull of the set
          \[    \bigcup_{m\in \ \mathrm{supp}\  f} (m+\sigma^\vee) \subset \sigma^\vee.  \]
\end{definition}

Note that, as in the two-dimensional case examined in Section \ref{sect:Newtonpolyg}, 
for every $w \in \sigma^{\circ}$ the basis $\delta_w(f)$ appearing in Definition \ref{def:initoric} 
is the intersection of $\mathrm{supp}\  f$ with a compact face of $\mathcal{N}(f)$. 
This face is the locus where the linear form $w: M_{\mathbb{R}} \to \mathbb{R}$ 
achieves its minimum in restriction to the polyhedron $\mathcal{N}(f)$.

There is a duality between the faces of the Newton polyhedron $\mathcal{N}(f)$ and 
a certain fan with support 
$\sigma$. Namely, if $\kappa$ is a face of $\mathcal{N}(f)$, its {\bf dual cone}
\index{Dual cone!of a face}
$\boxed{\sigma(\kappa)}\subset\sigma$ is the closure of the set of weight vectors 
$w\in\sigma$ whose
restriction $w: \mathcal{N}(f) \to \mathbb{R}_{\geq 0}$ achieves its minimum exactly 
on the face $\kappa$. Then, as results from \cite[Chapter II.3.(A)]{O 97}, one has the following 
statement: 

\begin{lemma} \label{lem:Nfan}
      The set of cones $\sigma(\kappa)$, where $\kappa$ varies over the faces of the Newton  polyhedron 
       $\mathcal{N}(f)$ of a series $f\in \mathbb{K}[[\Gamma]]$, forms a fan with support $\sigma$.
\end{lemma}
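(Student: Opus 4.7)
The plan is to recognize the family $\{\sigma(\kappa)\}$ as the \emph{inner normal fan} of the polyhedron $\mathcal{N}(f)$, and to appeal to the classical theory of normal fans of rational polyhedra with prescribed recession cone (as developed for instance in \cite[Chapter~II.3.(A)]{O 97}).

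First I would show that $\mathcal{N}(f)$ is a genuine rational polyhedron, despite $\mathrm{supp}\,f$ being possibly infinite. For this I would consider the subset $V \subseteq \mathrm{supp}\,f$ of elements minimal for the partial order ``$m \preceq m'$ iff $m' - m \in \sigma^\vee\cap M$''. Its finiteness follows from the Noetherianity of the semigroup algebra $\mathbb{K}[\sigma^\vee\cap M]$ (itself a consequence of Gordan's lemma): the monomial ideal generated by $\{\chi^m : m \in \mathrm{supp}\,f\}$ is finitely generated by a subset of these monomials, whose exponents are precisely the elements of $V$. A routine verification then gives $\mathcal{N}(f) = \mathrm{conv}(V) + \sigma^\vee$, so that $\mathcal{N}(f)$ is a rational polyhedron with recession cone $\sigma^\vee$.

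Once $\mathcal{N}(f)$ is known to be a rational polyhedron with recession cone $\sigma^\vee$, the remaining assertions become standard features of its inner normal fan. Namely: each $\sigma(\kappa)$ is a rational polyhedral cone; its faces are precisely the cones $\sigma(\kappa')$ associated to faces $\kappa'$ of $\mathcal{N}(f)$ containing $\kappa$; and $\sigma(\kappa_1)\cap\sigma(\kappa_2) = \sigma(\kappa')$, where $\kappa'$ is the smallest face of $\mathcal{N}(f)$ containing $\kappa_1\cup\kappa_2$. For the support, a weight vector $w \in N_{\mathbb{R}}$ has bounded-below restriction to $\mathcal{N}(f)$ (equivalently, attains its minimum on some face) if and only if $w\cdot m\geq 0$ for every $m$ in the recession cone $\sigma^\vee$, that is, if and only if $w\in(\sigma^\vee)^\vee = \sigma$. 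Hence the union of the cones $\sigma(\kappa)$ equals $\sigma$.

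The main obstacle is the first step: verifying that $V$ is finite and that $\mathcal{N}(f)$ is therefore a true polyhedron, as this is the only point where the possibly infinite nature of $\mathrm{supp}\,f$ intervenes. Once this is accomplished, the rest of the argument is an essentially mechanical invocation of the normal-fan formalism, with only the minor care needed to treat possibly unbounded faces $\kappa$ of $\mathcal{N}(f)$ when writing ``the minimum is attained exactly on $\kappa$''.
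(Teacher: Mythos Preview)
Your proposal is correct and follows the same route as the paper: both reduce the lemma to the standard theory of inner normal fans of rational polyhedra, citing \cite[Chapter~II.3.(A)]{O 97}. The paper in fact gives no proof at all beyond that citation, so your added verification that $\mathcal{N}(f)$ is a genuine rational polyhedron with recession cone $\sigma^\vee$ (via the finiteness of the set of $\preceq$-minimal exponents, using Noetherianity of $\mathbb{K}[\Gamma]$) is a useful detail the paper leaves implicit.
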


\begin{definition}\label{def:Nfan}
      The fan $\boxed{\mathcal{F}(f)}$ with support $\sigma$
      described in Lemma \ref{lem:Nfan} is called the {\bf Newton fan} of the series $f$.
      \index{Newton!fan}  \index{Fan!Newton}
\end{definition}   

For instance, the subdivision of the non-negative quadrant shown in Figure \ref{fig:loctropcubic} 
is the Newton fan of the series $f(x,y) = y^2 - 2 x^3 + x^2 y \ \in \mathbb{C}[[x,y]]$, whose 
Newton polygon is shown in Figure \ref{fig:NPcubicex}. 

By an argument similar to those explained for plane curve singularities after 
Proposition \ref{prop:NPMinksum}, the Newton polyhedron of a defining series $f$ 
of the interior subtoric principal divisor
$Y$ is independent of the choice of this series. Therefore, one may speak simply of the  
{\bf Newton fan} $\boxed{\mathcal{F}(Y)}$ of $Y \hookrightarrow (\mathcal{X}_{\sigma},0)$.

The connection of the Newton fan $\mathcal{F}(Y)$ with the local tropicalization 
of $Y$ (see Theorem \ref{thm:coincidthm}), is given by the following 
result of \cite[Proposition~11.8]{PS 13}:

\begin{theorem}    \label{thm:divloctrop}
     Let $Y \hookrightarrow (\mathcal{X}_{\sigma},0)$ be an interior effective principal divisor. 
     Then, the local tropicalization $\mathrm{Trop}_{\mathrm{loc}} Y$ of $Y$
     coincides with the union of the cones of the Newton fan $\mathcal{F}(Y)$ of $Y$ 
     which are dual to the compact edges of $\mathcal{N}(Y)$.
     \index{Local tropicalization!of a principal divisor} 
\end{theorem}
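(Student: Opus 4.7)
The plan is to derive the theorem from viewpoint \emph{\ref{eq:closinitw}.} of Theorem \ref{thm:coincidthm}, which identifies $\mathrm{Trop}_{\mathrm{loc}} Y$ with the cone-closure of the set of initial weight vectors of $Y$. Since $I_Y=(f)$ is principal, the first step is to reduce the computation of $\mathrm{in}_w I_Y$ to that of $\mathrm{in}_w f$. For $w\in\sigma^{\circ}$ the map $\nu_w(g):=\min\{w\cdot m \,:\, m\in \mathrm{supp}\, g\}$ is a well-defined valuation on $\mathbb{K}[[\Gamma]]$ (the minimum exists because $w$ is strictly positive on $\sigma^\vee\smallsetminus\{0\}$), so $\mathrm{in}_w(gf)=\mathrm{in}_w(g)\cdot \mathrm{in}_w(f)$ for every nonzero $g$. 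This multiplicativity yields $\mathrm{in}_w I_Y=(\mathrm{in}_w f)$, so that $\mathrm{in}_w I_Y$ contains a monomial if and only if $\mathrm{in}_w f$ is itself a monomial.

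By Definitions \ref{def:initweightv} and \ref{def:initoric}, $w\in\sigma^\circ$ is then an initial weight vector of $Y$ if and only if the basis $\delta_w(f)$ contains at least two elements. Using Proposition \ref{prop:congeomapp}, this is equivalent to the statement that the minimum of the linear form $w\colon M_{\mathbb{R}}\to\mathbb{R}$ on the Newton polyhedron $\mathcal{N}(f)$ is achieved on a face of positive dimension. Moreover, since $w$ is strictly positive on $\sigma^\vee\smallsetminus\{0\}$ and $\sigma^\vee$ is the recession cone of $\mathcal{N}(f)=\mathrm{conv}(\mathrm{supp}\, f)+\sigma^\vee$, this minimum is automatically achieved on a \emph{compact} face. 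Consequently, the set of initial weight vectors coincides with $\sigma^\circ\cap \bigcup_{\kappa}\sigma(\kappa)$, where $\kappa$ ranges over the compact faces of $\mathcal{N}(f)$ of dimension at least one.

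Each such $\kappa$ contains at least one compact edge $e$, and whenever $e\subseteq\kappa$ one has $\sigma(e)\supseteq \sigma(\kappa)$ (smaller faces dualize to larger cones). Hence the union above reduces to $\sigma^\circ\cap\bigcup_{e}\sigma(e)$, with $e$ running over the compact edges of $\mathcal{N}(f)$. Each $\sigma(e)$ is a closed cone of dimension $\mathrm{rk}(N)-1$ inside $\sigma$, and the relative interior of $\sigma(e)$ consists of weights whose minimum on $\mathcal{N}(f)$ is attained exactly on the compact face $e$; in particular this relative interior lies in $\sigma^\circ$. It follows that $\overline{\sigma^\circ\cap \sigma(e)}=\sigma(e)$, so that the cone-closure of the set of initial weight vectors equals $\bigcup_{e}\sigma(e)$, which by Definition \ref{def:Nfan} is precisely the union of the cones of the Newton fan $\mathcal{F}(Y)$ dual to the compact edges of $\mathcal{N}(Y)$.

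The main technical obstacle will be the identity $\mathrm{in}_w I_Y=(\mathrm{in}_w f)$ in the formal power series setting; specifically, verifying that no monomial $\chi^m$ can be a $\mathbb{K}[[\Gamma]]$-multiple of a nonmonomial polynomial. This is straightforward when $\sigma^\vee \cap M$ is a free monoid (so that $\mathbb{K}[[\Gamma]]$ is a formal power series ring in the usual sense), and in the general case one may either use unique factorization in the saturated monoid algebra or pass to a regular subdivision of $\sigma$ where the argument localizes to smooth affine charts.
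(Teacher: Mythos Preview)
Your argument is correct and complete. The ``technical obstacle'' you flag is easily resolved: for $w\in\sigma^\circ$ every $w$-homogeneous element of $\mathbb{K}[[\Gamma]]$ is a polynomial (the level sets of $w$ on $\Gamma$ are finite), and in the Laurent polynomial ring $\mathbb{K}[M]\supset\mathbb{K}[\Gamma]$ the units are precisely the nonzero scalar multiples of monomials; hence a product of two elements of $\mathbb{K}[\Gamma]$ equal to a monomial forces both factors to be monomials, and $(\mathrm{in}_w f)$ is monomial-free whenever $\mathrm{in}_w f$ is not a monomial.

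Your route differs from the paper's. The paper indicates that one should argue as in Theorem~\ref{thm:pqpair} and Corollary~\ref{cor:arcweightplanecurve}, i.e.\ via characterization~\emph{\ref{eq:closarcwise}.} of Theorem~\ref{thm:coincidthm} (arcwise weight vectors): one substitutes an interior arc into $f$, expands, and sees that the vanishing of the lowest-order coefficient forces the face $\delta_w(f)$ to have at least two points; conversely one constructs arcs with prescribed initial exponent vector on each compact edge. You instead use characterization~\emph{\ref{eq:closinitw}.} (initial weight vectors), which for a principal ideal reduces immediately to the question of whether $\mathrm{in}_w f$ is a monomial. Your approach is more algebraic and avoids the construction of arcs entirely; the paper's approach is closer in spirit to the Newton--Puiseux method and makes the link with parametrizations explicit. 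Both land on the same combinatorial characterization via compact faces of $\mathcal{N}(f)$, and your reduction of higher-dimensional compact faces to compact edges via the inclusion $\sigma(\kappa)\subseteq\sigma(e)$ is a clean way to obtain the precise statement of the theorem.
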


One may prove Theorem \ref{thm:divloctrop} similarly to the proofs of Theorem \ref{thm:pqpair} 
and Corollary \ref{cor:arcweightplanecurve}, starting from characterization \eqref{eq:closarcwise} in   
Theorem \ref{thm:coincidthm} of the local tropicalization $\mathrm{Trop}_{\mathrm{loc}} Y$ of $Y$.

\begin{figure}[h!] 
\begin{center}
\begin{tikzpicture}[x=1.1cm,y=1.1cm, fill opacity=0.8] 

\begin{scope}[shift={(0,0)}]

\draw[->, thick] (0,0) -- (3,-1)  ;
\draw[->, thick] (0,0) -- (4,1)  ;
\draw[->, thick] (0,0) -- (0,3)  ;

 \fill[fill=blue!80]  (2,-0.66)--(3,0.75)-- (0, 2) --cycle;
   \fill[fill=yellow!50]  (0,3) -- (0, 2) -- (3,0.75) --(4,1) --cycle;
   \fill[fill=green!20]  (2,-0.66) -- (3,-1) --(4,1) -- (3,0.75)--cycle;
     \fill[fill=pink]  (0,3) -- (0, 2) --(2,-0.66) -- (3,-1) --cycle;

\node[draw,circle,inner sep=1.5pt,fill=black, color=black] at (2,-0.66) {};
\node[draw,circle,inner sep=1.5pt,fill=black,color=black] at (3,0.75) {};
\node[draw,circle,inner sep=1.5pt,fill=black,color=black] at (0,2) {}; 

\node [left] at (0,2) {$x_1^{\alpha}$};
\node [below] at (2,-0.66) {$x_2^{\beta}$};
\node [above] at (3, 0.75) {$x_3^{\gamma}$};

\node [below] at (1.5, -1.5) {$\mathcal{N}(Y) \subseteq \sigma_0^{\vee}$};

\end{scope}

%%%%%%%%%%%%%%%%%%%%%

\begin{scope}[shift={(6,0)}]

\draw[->, thick] (0,0) -- (3,-1)  ;
\draw[->, thick] (0,0) -- (4,1)  ;
\draw[->, thick] (0,0) -- (0,3)  ;

 \fill[fill=green!20]  (2.5, 3.12) -- (0,3) --(0,0) --cycle;
  \fill[fill=pink]  (2.5, 3.12) -- (4, 1) --(0,0) --cycle;
  \fill[fill=yellow!50]   (2.5, 3.12) -- (3, -1) --(0,0) --cycle;

\draw[-, blue, thick] (0,0) -- (2.5, 3.12)  ;

\node[draw,circle,inner sep=1.5pt,fill=black, color=black] at (2,2.5) {};

\node [right] at (2,2.5) {$(\beta \gamma, \gamma \alpha, \alpha \beta)$};
\node [below] at (1.5, -1.5) {$\mathrm{Trop}_{\mathrm{loc}} Y \subseteq \sigma_0$};

\end{scope}

 \end{tikzpicture}
\end{center}
\caption{The Newton polyhedron $\mathcal{N}(Y)$ of the Pham-Brieskorn singularity $Y \hookrightarrow (\mathbb{C}^3,0)$ defined by equation $x_{1}^{\alpha} + x_{2}^{\beta} +  x_{3}^{\gamma}=0$  
(on the left) and its local tropicalization (on the right).}  
   \label{fig:NPLTPB}
    \end{figure}
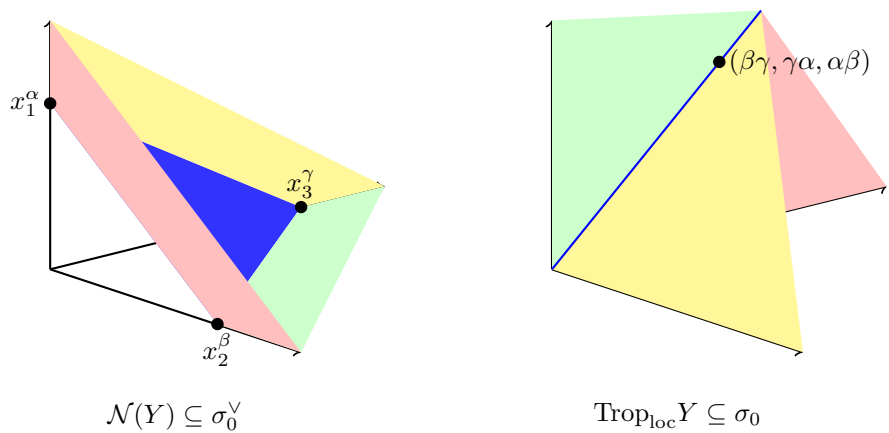

\begin{example}   \label{ex:loctropPhanBries}
   Consider a  {\bf Pham-Brieskorn surface singularity} \index{Singularity!Pham-Brieskorn}
   (a name which alludes mainly  to the papers \cite{P 65} 
   and \cite{B 68}, as explained in \cite[Pages 42--49]{B 00}). It is by definition a hypersurface 
   singularity in $\mathbb{C}^3$ defined by an equation of the form 
\begin{equation}   \label{eq:Brieskorn}
       x_{1}^{\alpha} + x_{2}^{\beta} +  x_{3}^{\gamma}=0,
\end{equation}
where $\alpha, \beta, \gamma$ are pairwise coprime positive integers. Its Newton polyhedron 
$\mathcal{N}(Y)$ and its local tropicalization $\mathrm{Trop}_{\mathrm{loc}} Y$ are represented 
in Figure \ref{fig:NPLTPB}. The weight cone $\sigma_0$ of $\mathbb{C}^3$ seen as an affine 
toric variety is the non-negative octant of the weight space $\mathbb{R}^3$. The compact faces 
of $\mathcal{N}(Y)$ are the triangle with vertices $(\alpha, 0, 0), (0, \beta, 0), (0, 0, \gamma)$, 
its edges and its vertices. By Theorem \ref{thm:divloctrop}, $\mathrm{Trop}_{\mathrm{loc}} Y$ is the 
 union of the three two-dimensional cones dual to the edges of the previous triangle. 
 Those cones are spanned by the vector 
 $(\beta \gamma, \gamma \alpha, \alpha \beta) \in \sigma_0$ orthogonal to the triangle and 
 by each of the three vectors of the canonical basis of the weight space $\mathbb{R}^3$. 
\end{example}

Note that Andr\'as N\'emethi and Baldur Sigur\dh sson described in \cite{NS 21} 
the local tropicalizations of effective but not necessarily principal divisors on toric germs.

\medskip
\section{The local tropicalization of splice type surface singularities}  \label{sec:splicetype}
\medskip

Theorem \ref{thm:divloctrop} shows that  the local tropicalization of an interior effective 
principal subtoric divisor is determined by the associated Newton polyhedron. By contrast, 
if an interior subtoric germ $Y \hookrightarrow (\mathcal{X}_{\sigma},0)$ has a non-principal 
defining ideal $I_Y$, then it is much more difficult to determine its local tropicalization. 
The reason is that  $\mathrm{Trop}_{\mathrm{loc}} Y$ is in general not the intersection 
of the local tropicalizations of the members of a generating family of $I_Y$, even if this 
generating family describes $Y$ as a complete intersection.  In this section we explain 
how we described in \cite{CPS 24}, in collaboration with Mar\'{\i}a Ang\'elica Cueto, 
the local tropicalizations of {\em splice type singularities}, which form an important 
class of isolated complete intersection surface singularities. 
\medskip

{\bf Splice type surface singularities}  \index{Singularity!splice type}
were introduced in \cite{NW 05bis} and \cite{NW 05} by Walter Neumann 
and Jonathan Wahl as examples  of complex isolated complete intersection 
surface singularities whose links are integral homology spheres. 
One may refer to Wahl's papers \cite{W 06} and \cite{W 22} for a description 
of their motivations for defining them. 

This class of singularities contains the class of Pham-Brieskorn surface  singularities  (see Example \ref{ex:loctropPhanBries}).  The following system defines also a splice type surface singularity in $\mathbb{C}^4$ (see \cite[Examples 2.27, 2.34, 2.40]{CPS 23}):
\begin{equation}\label{eq:spliceex}
\begin{cases}
    x_1^2 - x_2^3 + x_3 x_4 = 0, \\
    x_3^5 - x_4^2 + x_1 x_2^4 = 0.
  \end{cases}
\end{equation}

In general, a splice type surface singularity is determined up to a special kind of 
equisingular deformation by a {\bf splice diagram}. \index{Splice diagram}
By this, one means a pair $(\Gamma,w)$, where $\boxed{\Gamma}$ is a tree whose
vertices have valencies either $1$ ({\bf leaves}) or $\geq 3$ ({\bf nodes}) and $w$ is a function
that assigns to each pair $(u,e)$ of a node $u$ and adjacent edge $e$ a positive integer ({\bf weight})
$\boxed{w_{u,e}}$. Thus, each edge connecting nodes $u$ and $v$ carries two weights 
-- one near the node $u$ and one near $v$. In the context of integral homology sphere links, 
these data must satisfy three more properties, called {\em coprime, edge determinant} and 
   {\em semigroup conditions}:
\begin{itemize}
    \item The {\em coprime condition} states that 
        \emph{for each node, the weights of adjacent edges are $\geq 2$ and  pairwise coprime}. 
        
     \item For each node $u$ and vertex $p$ 
            (which may be either a node or a leaf), define $\boxed{\ell_{u,p}}$ to be the
            product of all the weights adjacent to, but not lying on, the shortest path from $u$ to $p$.  
            In particular, when $p =u$ one gets the product $\boxed{d_u}$ of weights adjacent to $u$. 
           The  {\em edge determinant condition} states that {\em for every pair of adjacent nodes 
           $u$ and $v$, one has the inequality $d_u d_v > \ell_{u,v}^2$}.
           
     \item Denote by $1,\ldots, n$ the leaves of $\Gamma$. For each node $u$, 
         consider the {\bf weight vector} 
        \[   \boxed{w_u}=(\ell_{u,1},\dots,  \ell_{u,n})\in \mathbb{Z}^n\subset \mathbb{R}^n. \]
        If $e$ is an edge adjacent to $u$, then it makes sense to speak about the leaves of $\Gamma$ 
        seen from $u$ in the direction $e$. If $i_1,\ldots, i_s$ are these leaves, then the 
        {\em semigroup condition} says that \emph{the number $d_u$ must be contained in 
       the sub-semigroup of $(\mathbb{Z}_{\geq 0}, +)$ generated by 
       $\ell_{u,i_1},\ldots,\ell_{u,i_s}$, for all pairs $(u,e)$ of a node $u$ and an adjacent edge $e$}.
 \end{itemize}
Examples of splice diagrams satisfying the three conditions are shown in Figure \ref{fig:splicediags}.

\begin{figure}[h!] 
\begin{center}
\begin{tikzpicture}[x=0.7cm,y=0.7cm] 

\begin{scope}[shift={(0,0)}]

\draw[-] (-2,-2) -- (0,0) -- (-2,2)  ;
\draw[-] (0,0) -- (3,0)  ;

\node[draw,circle,inner sep=1.5pt,fill=black, color=black] at (0,0) {};
\node[draw,circle,inner sep=1.5pt,fill=black,color=black] at (-2,-2) {};
\node[draw,circle,inner sep=1.5pt,fill=black,color=black] at (-2,2) {};
\node[draw,circle,inner sep=1.5pt,fill=black,color=black] at (3,0) {};

\node [above] at (-0.5,0.5) {$\alpha$};
\node [below] at (-0.5,-0.5) {$\beta$};
\node [above] at (0.7, 0) {$\gamma$};
\node [above] at (-2,2) {$x_1$};
\node [below] at (-2,-2) {$x_2$};
\node [above] at (3, 0) {$x_3$};

\end{scope}

%%%%%%%%%%%%%%%%%%%%%

\begin{scope}[shift={(8,0)}]

\draw[-] (-2,-2) -- (0,0) -- (-2,2)  ;
\draw[-] (0,0) -- (3,0)  ;
\draw[-] (5,-2) -- (3,0) -- (5,2)  ;

\node[draw,circle,inner sep=1.5pt,fill=black, color=black] at (0,0) {};
\node[draw,circle,inner sep=1.5pt,fill=black,color=black] at (-2,-2) {};
\node[draw,circle,inner sep=1.5pt,fill=black,color=black] at (-2,2) {};
\node[draw,circle,inner sep=1.5pt,fill=black,color=black] at (3,0) {};
\node[draw,circle,inner sep=1.5pt,fill=black,color=black] at (5,-2) {};
\node[draw,circle,inner sep=1.5pt,fill=black,color=black] at (5,2) {};

\node [above] at (-0.5,0.5) {$2$};
\node [below] at (-0.5,-0.5) {$3$};
\node [above] at (0.7, 0) {$7$};
\node [above] at (2.3,0) {$11$};
\node [above] at (3.5,0.5) {$2$};
\node [below] at (3.5, -0.5) {$5$};
\node [above] at (-2,2) {$x_1$};
\node [below] at (-2,-2) {$x_2$};
\node [below] at (5, -2) {$x_3$};
\node [above] at (5, 2) {$x_4$};

\end{scope}

 \end{tikzpicture}
\end{center}
\caption{The splice diagrams corresponding to the singularities defined by equation 
       \eqref{eq:Brieskorn} (on the left) and by the system \eqref{eq:spliceex} (on the right).}  
   \label{fig:splicediags}
    \end{figure}
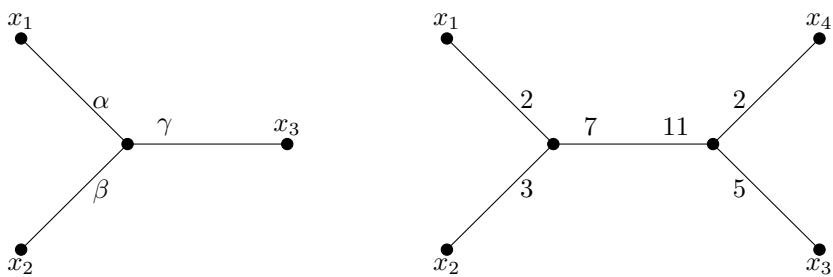

To get from such data a singularity with integral homology sphere link, one should apply the
following procedure developed by Neumann and Wahl. First of all, associate a variable $x_i$ 
to every leaf $i$. Then, let $u$ be a node, $e$ an adjacent edge, and
$i_1,\ldots,i_s$ the leaves seen from $u$ in the direction of $e$. Choose a representation
      \[  d_u=\sum_{k=1}^{s} n_k\ell_{e,i_k},   \]
such that $n_k \in \mathbb{Z}_{\geq 0}$ for every $k \in \{1, \dots, s\}$. Such a representation
exists, by the semigroup condition. Define the {\bf admissible monomial} $\chi_{u,e}$ as
    \[   \boxed{\chi_{u,e}} : =x_{i_1}^{n_1} \cdots x_{i_s}^{n_s}.  \]
If the node $u$ has valency $r_u\geq 3$ and $e_1,\ldots e_{r_u}$ are the adjacent edges, it is more
convenient to denote the corresponding admissible monomials by $\chi_{u,1},\ldots,\chi_{u,r_u}$.
For each node $u$, choose a complex matrix $\boxed{(a_{u,i,j})_{i,j}}$ 
of size $(r_u-2)\times r_u$,  such that all its maximal minors are non-zero. 
The {\bf splice type system} determined by the splice diagram
$(\Gamma,w)$ and a choice of admissible monomials $\chi_{u,j}$ and coefficients $a_{u,i,j}$ 
is the system of equations
    \begin{equation}\label{eq:splice}
        \sum_{j=1}^{r_u} a_{u,i,j} \chi_{u,j}=0, \:  \mbox{ for all }  i \in \{ 1,\dots,r_u-2 \},
    \end{equation}
where $u$ varies among all the nodes of $\Gamma$. It is an easy combinatorial exercise to check that  
the system~\eqref{eq:splice} has $n-2$ equations. Finally,
by definition, a {\bf splice type singularity} is the germ at the origin of
the variety in $\mathbb{C}^n$ defined by the system \eqref{eq:splice} or by any system obtained 
by adding to \eqref{eq:splice} terms of heigher $w_u$-weight to the equations corresponding to the 
node $u$.
The interested reader can check that splice diagrams shown
in Figure \ref{fig:splicediags} indeed correspond to the singularities defined by equation \eqref{eq:Brieskorn} and by the system \eqref{eq:spliceex}.

A system of equations corresponding to a big splice diagram can be rather complicated, 
and {\em it is far from obvious that the singularity defined by such a system is an irreducible 
isolated complete intersection of dimension $2$}. Indeed, the proof of this fact spans several pages
in \cite{NW 05bis} and \cite{NW 05}, where the authors proceed by direct calculations with partial
resolutions of the germ. In turn, the theory of local tropicalization allows for a more transparent and
conceptual proof of these properties. We developed this strategy in full detail in \cite{CPS 24}. 
Here we present a brief summary of our method.
\medskip

Our main point in  \cite{CPS 24} was to give a previously unknown tropical interpretation of 
the splice diagram. We associated above a weight vector $w_u$ to each node $u$ of a splice
diagram $\Gamma$. If we additionally associate the $i$-th vector $e_i$ of the canonical basis of $\mathbb{R}^n$ to the $i$th leaf of $\Gamma$, we proved in \cite[Theorem 5.11]{CPS 24} that once 
conveniently normalized, \emph{those weight vectors determine a piecewise linear 
embedding of the splice diagram $\Gamma$  into the simplex 
$\Delta_{n-1}$ whose vertices are the points $e_i$}.

Further, by analyzing by purely combinatorial means  (no calculations 
with resolutions being needed) the algorithm by which Neumann and Wahl 
constructed a system of equations from a given splice
diagram $\Gamma$, we showed in \cite[Theorem 1.2]{CPS 24} that:

\begin{theorem}
    The local   tropicalization of a splice type surface singularity $Y \hookrightarrow (\mathbb{C}^n, 0)$ 
    corresponding to the splice diagram $\Gamma$  is the non-negative cone in 
    $\mathbb{R}^n$ over the tree $\Gamma$ embedded in $\Delta_{n-1}$.  
 \end{theorem}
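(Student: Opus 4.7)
I would use viewpoint \eqref{eq:closinitw} of Theorem \ref{thm:coincidthm}, characterizing $\mathrm{Trop}_{\mathrm{loc}} Y$ as the cone-closure of the set $\mathfrak{W}(Y) \subset \sigma_0^{\circ} = (\mathbb{R}_{>0})^n$ of weight vectors for which $\mathrm{in}_w I_Y$ is monomial-free. Write $C(\Gamma) \subset \mathbb{R}_{\geq 0}^n$ for the non-negative cone over the embedded tree. Its maximal cones are the two-dimensional cones $\mathbb{R}_{\geq 0}\langle w_u, w_v\rangle$ over edges joining two nodes and $\mathbb{R}_{\geq 0}\langle w_u, e_i\rangle$ over edges joining a node $u$ to an adjacent leaf $i$. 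By Theorem \ref{thm:structloctrop} we already know that $\mathrm{Trop}_{\mathrm{loc}} Y$ is the support of a pure two-dimensional fan whose maximal cones meet $\sigma_0^{\circ}$, so it suffices to prove $C(\Gamma) = \mathrm{Trop}_{\mathrm{loc}} Y$ as sets, checking each of the two inclusions.

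\textbf{Inclusion $C(\Gamma) \subseteq \mathrm{Trop}_{\mathrm{loc}} Y$.} The starting structural observation is that the equations \eqref{eq:splice} attached to a node $u$ are $w_u$-weighted homogeneous of common weight $d_u$: this holds by construction of the admissible monomials $\chi_{u,j}$, whose $w_u$-weight equals $d_u$ by the definition of $w_u$ and the partition $d_u = \sum_k n_k \ell_{u,i_k}$. Consequently $\mathrm{in}_{w_u}$ of each equation at $u$ is the equation itself, and because the coefficient matrix $(a_{u,i,j})$ has all maximal minors non-zero, no monomial lies in the ideal generated by these linear combinations of $\chi_{u,1},\dots,\chi_{u,r_u}$. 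I would prefer to bypass the harder question of what $\mathrm{in}_{w_u}$ does to equations at other nodes $v \neq u$ by working with viewpoint \eqref{eq:closarcwise} instead: construct for each node $u$ an explicit monomial arc $\gamma_u(t) = (c_1 t^{\ell_{u,1}}, \dots, c_n t^{\ell_{u,n}})$ (with suitable coefficients $c_i$ determined inductively outwards from $u$ along $\Gamma$ using the semigroup condition) lying in $Y$, so that $w(\gamma_u) = w_u$. The ray through $e_i$ is realized by the branch of $Y$ inside the coordinate hyperplane corresponding to leaf $i$, and interpolation along edges of $\Gamma$ recovers every ray of $C(\Gamma)$.

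\textbf{Inclusion $\mathrm{Trop}_{\mathrm{loc}} Y \subseteq C(\Gamma)$.} This is the hard direction and the expected main obstacle, precisely because $Y$ is not a principal divisor, so one cannot naively intersect the local tropicalizations of the defining equations — doing so produces a strictly larger set. Given $w \in \sigma_0^{\circ} \setminus C(\Gamma)$, the tree structure picks out a node $u$ such that $w$ fails to lie in any cone of $C(\Gamma)$ adjacent to $u$; I would exploit this to produce, using $\mathbb{K}[[\Gamma]]$-linear combinations and higher syzygies of the splice equations, an element $f \in I_Y$ with $\mathrm{in}_w f$ a monomial. The recipe is to apply Neumann and Wahl's elimination algorithm, rewritten tropically: start with the equations at $u$, use the edge determinant condition to resolve a pair of admissible monomials against one another, and propagate outwards along $\Gamma$, each step controlled by the semigroup condition so that no new monomial vanishes in initial forms. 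The output is a monomial combination of variables, which gives the required monomial element of $\mathrm{in}_w I_Y$.

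\textbf{Main obstacle.} The genuinely hard step is the second inclusion: it requires a combinatorial bridge between the global geometry of the splice diagram (the semigroup and edge determinant conditions) and the algebra of syzygies of the splice type system. Replacing Neumann--Wahl's explicit resolution-based arguments by this tropical syzygy analysis is the conceptual payoff of \cite{CPS 24}, and it is the part where the bulk of the combinatorial work concentrates.
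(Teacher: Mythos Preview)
Your overall architecture---two inclusions via viewpoint \eqref{eq:closinitw}, with the second acknowledged as the hard combinatorial step---matches the spirit of the argument in \cite{CPS 24} that the paper is summarizing. But your attempt to bypass the analysis of $\mathrm{in}_{w_u}$ on equations at nodes $v\neq u$ by constructing monomial arcs is a genuine error, not a shortcut.

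Test it on the paper's own example \eqref{eq:spliceex}. For the left node $u$ one computes $w_u=(21,14,12,30)$; the equation $f_1=x_1^2-x_2^3+x_3x_4$ is $w_u$-homogeneous of degree $d_u=42$, but $f_2=x_3^5-x_4^2+x_1x_2^4$ is not: its monomials have $w_u$-weights $60,60,77$. Substituting your monomial arc $\gamma_u(t)=(c_1t^{21},c_2t^{14},c_3t^{12},c_4t^{30})$ into $f_2$ yields $(c_3^5-c_4^2)t^{60}+c_1c_2^4\,t^{77}$, and identically vanishing forces $c_1c_2^4=0$, pushing the arc into the toric boundary. So no \emph{interior} monomial arc on $Y$ has weight vector $w_u$; the interior arc promised by Theorem \ref{thm:coincidthm}\eqref{eq:closarcwise} must carry higher-order terms. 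You therefore cannot avoid computing $\mathrm{in}_{w_u}$ of the equations at \emph{all} nodes (here $\mathrm{in}_{w_u}f_2=x_3^5-x_4^2$, and one must check that the ideal $(f_1,\,x_3^5-x_4^2)$ is monomial-free). This combinatorial analysis of initial forms across the whole diagram is precisely what \cite{CPS 24} carries out, and the edge-determinant and semigroup conditions are needed already for the ``easy'' inclusion, not only for the hard one.

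A second, smaller error: the rays $\mathbb{R}_{\geq 0}e_i$ lie in $\partial\sigma_0$, so they are never weight vectors of interior subtoric arcs in the sense of Definition \ref{def:arcs}; an arc lying in $\{x_i=0\}$ is by definition not interior. These boundary rays enter $\mathrm{Trop}_{\mathrm{loc}} Y$ only through the closure operation, as limits of rays in the interiors of the adjacent two-dimensional cones. And that is exactly what your unexplained ``interpolation along edges'' would have to supply: knowing $w_u,w_v\in\mathrm{Trop}_{\mathrm{loc}} Y$ does not by itself place the cone $\mathbb{R}_{\geq 0}\langle w_u,w_v\rangle$ inside $\mathrm{Trop}_{\mathrm{loc}} Y$.
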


For instance, note that by intersecting the local tropicalization of the Pham-Brieskorn 
singularity represented on the right side of Figure \ref{fig:NPLTPB} with the triangle $\Delta_2$, 
one gets a tree isomorphic to the corresponding splice diagram, which is represented 
on the left side of Figure \ref{fig:splicediags}.

Since $\Gamma\subset \Delta_{n-1}$ is a $1$-dimensional simplicial complex, 
the local tropicalization $\mathrm{Trop}_{\mathrm{loc}} Y$ (endowed with appropriate fan structure) 
has maximal cones of dimension $2$. Note that we started with integral weight vectors 
corresponding to the vertices of $\Gamma$, thus $\mathrm{Trop}_{\mathrm{loc}} Y$
is indeed the support of a {\em rational} polyhedral fan, as stated in Theorem \ref{thm:structloctrop}.
From the same theorem, we got $\dim Y = 2$. The splice system corresponding to $\Gamma$ 
has the correct number of equations (namely $n-2$), so it follows at once that $Y$ 
is indeed a complete intersection and that all its irreducible components have dimension $2$.

Recall that throughout this paper we assumed for simplicity that all the subtoric germs were 
{\em interior} in the sense of Definition \ref{def:subtorgerm}. However, the theory of local 
tropicalization may be extended to subtoric germs which are not interior by adding 
\emph{strata at infinity} to the local tropicalization 
(see Subsection \ref{ssec:extloctrop} for an introduction and \cite[Sections 4 and 6]{PS 13} for details). 
In \cite{CPS 24} we performed the corresponding analysis and we showed that
if $Y$ is a splice type surface singularity, then its local tropicalization 
$\mathrm{Trop}_{\mathrm{loc}} Y$ has no strata at infinity
except the closures of its finite strata, and thus $Y$ has no irreducible components 
in the toric boundary $\partial \mathbb{C}^n$.

As a byproduct of our techniques, we got a relatively easy proof of the 
following new result (see  \cite[Theorem 1.1]{CPS 24}): 

\begin{theorem}
   Any splice type surface singularity is a Newton nondegenerate complete intersection.
\end{theorem}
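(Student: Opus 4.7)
The strategy is to combine the tropical description just stated (that $\mathrm{Trop}_{\mathrm{loc}} Y$ is the non-negative cone over the embedded splice diagram $\Gamma\subset\Delta_{n-1}$) with the structure result in Theorem~\ref{thm:structloctrop} to reduce Newton nondegeneracy to a finite combinatorial check of initial ideals. The tree $\Gamma$ endows $\mathrm{Trop}_{\mathrm{loc}} Y$ with a natural fan structure whose rays are generated by the weight vectors $w_u$ at the nodes $u$ and by the canonical basis vectors $e_i$ at the leaves $i$, and whose two-dimensional cones are spanned by the pairs of rays corresponding to edges of $\Gamma$.

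The first step uses viewpoint~\ref{eq:closinitw} of Theorem~\ref{thm:coincidthm} together with Theorem~\ref{thm:structloctrop}: the initial ideal $\mathrm{in}_w I_Y$ is constant as $w$ varies in the relative interior of a cone $\tau$ of this fan, and moreover $\mathrm{in}_w I_Y$ contains a monomial when $w\not\in \mathrm{Trop}_{\mathrm{loc}} Y$. Newton nondegeneracy of the complete intersection $Y\hookrightarrow (\mathbb{C}^n,0)$ is by definition the statement that for every $w\in \sigma_0^\circ$ the subscheme of the dense torus $T_N$ cut out by $\mathrm{in}_w I_Y$ is smooth. Since the monomial case forces the relevant subscheme to be empty, it is enough to check smoothness on one test weight in the relative interior of each cone of the above fan.

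The second step is to compute these initial ideals from the Neumann--Wahl splice system~\eqref{eq:splice}. All admissible monomials $\chi_{u,j}$ at a node $u$ have the same $w_u$-weight $d_u$, so each splice equation at $u$ is $w_u$-quasi-homogeneous and survives unchanged in $\mathrm{in}_{w_u} I_Y$. For any other node $v$, the definition of the weights $\ell_{v,p}$ forces exactly one admissible monomial $\chi_{v,k}$ (the one in the direction pointing toward $u$) to achieve the strict minimum of the $w_u$-weights among the $\chi_{v,\cdot}$, so each equation at $v$ degenerates to a nonzero scalar multiple of a single admissible monomial. An analogous bookkeeping, writing the splice equations at $u$ and $v$ as polynomials in the admissible monomials and using the ordering of the $\ell_{\cdot,\cdot}$ along the edge, handles the case of a weight in the relative interior of the two-dimensional cone over an edge $e$ of $\Gamma$.

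Finally, smoothness of the resulting initial subscheme in the torus is verified by the Jacobian criterion. The coprimality condition at each node guarantees that the exponent vectors of the admissible monomials at that node are linearly independent, and the hypothesis that all maximal minors of the coefficient matrices $(a_{u,i,j})$ are nonzero provides maximal rank for the differentials of the $w$-initial splice equations after restriction to $T_N$. I expect the main technical obstacle to be the case of a two-dimensional cone over an edge joining two adjacent nodes $u$ and $v$: there both sets of splice equations contribute nontrivially and the edge determinant inequality $d_u d_v > \ell_{u,v}^2$ should appear as the exact numerical condition preventing the mixed Jacobian from dropping rank, so the verification requires a careful simultaneous tracking of admissible monomials at both endpoints of $e$.
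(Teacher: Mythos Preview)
Your overall strategy---compute the $w$-initial forms of the splice equations explicitly and verify smoothness in the torus via the Jacobian criterion, using coprimality and the nondegeneracy of the coefficient matrices---is the right one, and it is essentially how the result is obtained in \cite{CPS 24} (the present survey only states the theorem and refers there). However, your first step contains a genuine gap.

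Newton nondegeneracy of a complete intersection is a property of the chosen defining system $(f_1,\dots,f_{n-2})$: for every $w\in\sigma_0^\circ$ one requires that the scheme cut out in $T_N$ by the initial \emph{forms} $\mathrm{in}_w f_1,\dots,\mathrm{in}_w f_{n-2}$ be smooth of the expected codimension. This is \emph{not} a statement about the initial ideal $\mathrm{in}_w I_Y$, and Theorem~\ref{thm:structloctrop} concerns only the latter. In particular, the implication ``$w\notin\mathrm{Trop}_{\mathrm{loc}} Y\Rightarrow \mathrm{in}_w I_Y$ contains a monomial'' does \emph{not} yield that some $\mathrm{in}_w f_i$ is a monomial, so your claim that weights outside the tropicalization are automatically harmless is unjustified. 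The correct finiteness reduction comes from the common refinement of the Newton fans $\mathcal{F}(f_i)$ of the individual splice equations: the tuple $(\mathrm{in}_w f_1,\dots,\mathrm{in}_w f_{n-2})$ is constant on the relative interior of each cone of that refinement. Your step~2 computation, done carefully, is exactly what identifies the maximal cones of this refinement on which at least one initial form is \emph{not} a monomial, and shows they are precisely the cones over the edges of the embedded tree; but you must argue this directly from the Newton polyhedra of the $f_i$, not by invoking the tropicalization.

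A smaller point: your expectation that the edge determinant inequality $d_u d_v>\ell_{u,v}^2$ is what rescues the Jacobian on a two-dimensional cone over an internal edge is most likely a red herring. In \cite{CPS 24} the Newton nondegeneracy check uses only the coprimality of the edge weights at each node (which controls the exponent vectors of the admissible monomials) and the maximal-minor condition on the matrices $(a_{u,i,j})$; the edge determinant condition is the negative-definiteness constraint ensuring one has an isolated singularity with the correct link, and does not enter the Jacobian computation.
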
 

Combining this fact with Theorem \ref{thm:coincidthm} and the notion of
{\em embedded toric resolution} (see, e.g., \cite{K 84}), we proved that a resolution 
of singularities of $Y$ could be obtained by a toric morphism corresponding to a convenient fan 
supported by $\mathrm{Trop}_{\mathrm{loc}} Y$. But since the entire local tropicalization (with the exception
of the coordinate rays $\mathbb{R}_{\geq 0}\ e_i$) is contained in the positive orthant $\mathbb{R}_{>0}^{n}$
of $\mathbb{R}^n$, this toric morphism does not blow up anything except the origin in $\mathbb{C}^n$. This implies that
{\em the splice type singularity $Y$ is isolated}.

Finally, {\em $Y$ must be irreducible} because if a complete intersection singularity 
of dimension $\geq 2$ has two or more irreducible components, then it cannot be isolated 
(this is a special case of a theorem of Hartshorne, as explained in the proof of 
\cite[Corollary 7.13]{PS 13}).

\medskip

As a generalization of Goldin and Teissier's 
\cite[Corollary of Theorem 6.1]{GT 00}, 
which concerned only {\em branches}, 
de Felipe, Gonz\'alez P\'erez and Mourtada 
showed in \cite{FGM 23} that any plane curve singularity may be reembedded 
explicitly in a higher dimensional space $\mathbb{C}^n$ as a Newton non-degenerate 
germ and they expressed the local tropicalization of the 
reembedded germ in terms of invariants of the starting plane curve singularity. 
In \cite[Corollary 7.14]{CPS 24} we obtained a second proof of part of their 
reembedding theorem as a particular case of our general results about splice type singularities.

\medskip
\section{Variants and extensions of local tropicalization}  \label{sect:variants}
\medskip

In this final section we describe briefly a natural extension of the notion of local tropicalization to the
case of a general subtoric germ, which can possess components in the toric boundary 
(see Subsection \ref{ssec:extloctrop}), as well as
other generalizations by several authors (see Subsections \ref{ssec:Ulogtrop} and \ref{ssec:othergen}).

\subsection{Extended local tropicalization}    \label{ssec:extloctrop}  
$\ $ \medskip

\index{Local tropicalization!extended} 
Till now we have always assumed that the subtoric germ 
$Y \hookrightarrow (\mathcal{X} _{\sigma}, 0)$ was \emph{interior} in the sense of 
Definition \ref{def:subtorgerm}, that is, without irreducible components contained 
in the toric boundary $\partial \mathcal{X} _{\sigma}$. However, it is sometimes
necessary to relax this condition. It is the fourth definition of the local tropicalization, based on valuative
weight vectors (see point {\em \ref{eq:closvalweight}.} of Theorem \ref{thm:coincidthm}), 
that most naturally can be generalized.

To illustrate the difficulty that arises when the subtoric germ is not interior, let us start with an example. 
Let  again $\mathbb{K}$ be an algebraically closed field and $\mathcal{X}_\sigma=\mathrm{Spec} (\mathbb{K}[x_1,\ldots,x_n])$ be the affine 
 space considered as a toric variety. Assume that the subtoric germ $Y \hookrightarrow
 (\mathcal{X}_\sigma,0)$ is contained in the coordinate hyperplane defined by $x_1=0$, but in no other
coordinate hyperplane. This means that $x_1\in I_Y$, but $x_i\notin I_Y$ 
for any $i\ne 1$, where 
$I_Y  \subset \mathbb{K}[[x_1,\ldots,x_n]]$ is the defining ideal of $Y$ in 
$ \mathcal{O}_{\mathcal{X}_\sigma,0}$. 
Now, if we try to compute 
the local tropicalization of $Y$ according, say, to point {\em \ref{eq:closvalweight}.} of 
Theorem \ref{thm:coincidthm},
we see that since $r_Y(x_1)=0 \in \mathcal{O}_Y$, we must have $\nu\circ r_Y(x_1)=+\infty$ 
for {\em each} semivaluation $\nu$ of $Y$, where the morphism of monoids 
    \[ r_Y : (\Gamma, +) \to  (\mathcal{O}_Y, \cdot) \]
was introduced in Definition \ref{def:valweight}. Thus there are no interior semivaluations and the local
tropicalization is empty. To get something nontrivial, we have to allow the semivaluation $\nu$ 
to take infinite
values on $\Gamma$, but then none of the corresponding valuative weight vectors of $Y$ can be 
interpreted as an element of the real vector space $N_{\mathbb{R}}$.

To see how to deal with this issue, let us denote by $\Gamma'$ the submonoid of 
$\Gamma = \mathbb{Z}_{\geq 0}^{n}$ 
generated by the exponent vectors of the monomials $x_2,\ldots,x_n$. Observe that despite the fact that any semivaluation $\nu$ of $Y$ takes value
$+\infty$ on $x_1$, such a semivaluation can be finite on $r_Y(\Gamma')$.
If this is the case, then $\nu$ defines a linear form on the space $M'_{\mathbb{R}}$, where $M'$ is
the sublattice of $M$ generated by $\Gamma'$. 

Recall the simple fact from linear algebra that
if $U$ is a linear subspace of a finite dimensional vector space $V$, then a linear form on $U$ is 
the same thing as an element of the quotient space $V^\vee/U^\perp$, i.e., there exists a canonical
isomorphism
     \[ U^\vee \simeq V^\vee/U^\perp, \]
where $V^\vee$ is the dual space of $V$ and $U^\perp \subseteq V^{\vee}$ is the orthogonal of $U$. It follows that we can interpret the general valuative weight vectors of $Y$ as elements of appropriate
quotients of the space $N_{\mathbb{R}}$. Following an approach initiated by Ash, Mumford, Rapoport and Tai in \cite[Section I.1]{AMRT 75} and developed by Kajiwara \cite{K 08} and Payne \cite{P 09}, we regard those quotients as \emph{strata at infinity} that can be
adjoined to the space $N_{\mathbb{R}}$.

The formal definition proceeds as follows. Let as before $\sigma$ be a strictly convex 
rational polyhedral cone in $N_{\mathbb{R}}$. If $\tau$ is a face of $\sigma$, denote $\boxed{N_\tau} := N\cap \mathbb{R} \tau$, where $\mathbb{R} \tau$ is the real vector subspace of $N_{\mathbb{R}}$ spanned by $\tau$. Since the abelian group $N_\tau$ is a {\em saturated} sublattice of $N$, 
then the quotient $N/N_\tau$ is also a lattice. Consider the disjoint union: 
   \begin{equation}\label{eq:linearvar}
        \boxed{L(\sigma,N)} :=\bigsqcup_{\tau\leq\sigma} (N/N_\tau)_{\mathbb{R}},
   \end{equation}
 where $\tau$ varies in the set of faces of the polyhedral cone $\sigma$. 
 
It is a formal exercise to see that $L(\sigma,N)$ can be identified with the set of all monoid homomorphisms from $(\Gamma := \sigma^\vee \cap M, +)$ to 
$(\boxed{\overline{\mathbb{R}}} :=\mathbb{R}\cup\{+\infty\}, +)$. 
Moreover, using the standard
topology on $\overline{\mathbb{R}}$, for which a basis of neighborhoods of $+\infty$ is the set of all intervals of the form $(a,+\infty]$, we can endow $L(\sigma,N)$ with the topology of pointwise convergence. Then, \eqref{eq:linearvar} represents $L(\sigma,N)$ as a
disjoint union of locally closed subsets, which we call its {\bf strata}.

Each stratum $(N/N_\tau)_{\mathbb{R}}$ carries naturally an integral lattice $N/N_\tau$.  
The cone $\sigma \subset N_{\mathbb{R}}$ has images in all the strata $(N/N_\tau)_{\mathbb{R}}$, 
by the canonical projections $N_{\mathbb{R}} \to (N/N_\tau)_{\mathbb{R}}$. We say that the disjoint 
union of $\sigma$ with all those images is the {\bf extended cone} \index{Weight cone!extended}
$\boxed{\overline{\sigma}}$.  
It is a closed topological subspace of $L(\sigma,N)$ which can be alternatively defined as the closure
of the cone $\sigma$ in $L(\sigma,N)$. The topological spaces $L(\sigma,N)$ had appeared several
times in the literature, but since there seemed to be no standard name for them, we called them 
{\bf linear varieties} \index{Linear variety} 
in \cite[Section 4]{PS 13}. We chose that name by analogy with that of 
{\em toric variety}. Indeed, similarly to the fact that a toric variety is endowed with an action of an algebraic torus embedded in it as a dense open set, a linear variety is endowed with an action of a linear space which is also embedded in it as a dense open set (see \cite[Remark 4.4]{PS 13}).

\begin{definition}   \label{def:extloctrop}
     \index{Local tropicalization!extended} 
    Let $Y \hookrightarrow (\mathcal{X},0)$ be a subtoric germ, not assumed to be interior. 
    Then, its {\bf extended local tropicalization} is defined as the closure in the extended cone  
    $\overline{\sigma}$ of all valuative weight vectors of $Y$ of the form $\nu\circ r_Y$ 
    (see Definition \ref{def:valweight}), where $\nu$ ranges over all (not necessarily interior) 
    semivaluations of $Y$.
\end{definition}

The notion of rational polyhedral fan has its extended version, and an analog of the structure
Theorem \ref{thm:structloctrop} holds for extended local tropicalization. In fact, the main finiteness theorem
is formulated and proven in \cite[Theorem~11.9]{PS 13} for extended local tropicalization 
(which we called there {\em non-negative local tropicalization}). 

In the case when $Y$ is not contained in the toric boundary of $\mathcal{X}$, 
Definition \ref{def:extloctrop}
generalizes Definition \ref{def:loctrop} of local tropicalization, 
in the sense that $\mathrm{Trop}_{\mathrm{loc}} Y$ is obviously
contained in the extended local tropicalization. Then, the question arises whether the extended and
non-extended tropicalizations are essentially different from each other. The answer is {\em no}, and this
again follows formally from \cite[Theorem~11.9]{PS 13}:

\begin{proposition}
     If $Y\hookrightarrow (\mathcal{X}_{\sigma},0)$ is an interior subtoric germ, 
     then the extended local tropicalization 
     of $Y$ is the closure  in $L(\sigma,N)$ of the local tropicalization $\mathrm{Trop}_{\mathrm{loc}} Y$ 
     in the sense of Definition \ref{def:loctrop}.
\end{proposition}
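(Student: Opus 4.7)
My plan is to prove the two inclusions separately. First, any interior semivaluation of $\mathcal{O}_{Y,0}$ is \emph{a fortiori} a semivaluation, so by viewpoint {\em \ref{eq:closvalweight}.} of Theorem \ref{thm:coincidthm} the set $\mathrm{Trop}_{\mathrm{loc}} Y$ is contained in the collection of valuative weight vectors of $Y$, and hence inside the extended local tropicalization. Since the extended local tropicalization is closed inside $\overline{\sigma}$, and $\overline{\sigma}$ is itself closed in $L(\sigma,N)$, the closure of $\mathrm{Trop}_{\mathrm{loc}} Y$ inside $L(\sigma,N)$ is contained in the extended local tropicalization. This settles the easy direction.

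For the reverse inclusion, the plan is to invoke the extended version of Theorem \ref{thm:structloctrop} from \cite[Theorem 11.9]{PS 13}, which was formulated precisely for the not-necessarily-interior setting. It provides an extended rational polyhedral fan $\overline{\mathcal{F}}$ whose support equals the extended local tropicalization. By the very definition of an extended fan, each of its cones lying in a boundary stratum $(N/N_\tau)_{\mathbb{R}}$ arises as the image under the projection $N_{\mathbb{R}}\twoheadrightarrow (N/N_\tau)_{\mathbb{R}}$ of a cone of the underlying (non-extended) fan living in $\sigma$. Using the pointwise-convergence topology on $L(\sigma,N)$ together with the description \eqref{eq:linearvar}, any boundary point is reached as the limit of its lifts translated by arbitrarily large multiples of a vector in the relative interior of $\tau$. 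Hence each boundary cone of $\overline{\mathcal{F}}$ lies in the $L(\sigma,N)$-closure of the corresponding finite cone of $\overline{\mathcal{F}}$.

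It then remains to identify the intersection of $\overline{\mathcal{F}}$ with the finite stratum $\sigma\subset\overline{\sigma}$ with $\mathrm{Trop}_{\mathrm{loc}} Y$. This follows because any semivaluation $\nu$ of $Y$ whose weight vector $\nu\circ r_Y$ lies in $\sigma$ is automatically finite on every $\chi^m$ with $m\in\Gamma$, and so is interior in the sense of Definition \ref{def:intval}; by Theorem \ref{thm:coincidthm} (viewpoint {\em \ref{eq:closvalweight}.}) its weight vector then belongs to $\mathrm{Trop}_{\mathrm{loc}} Y$. Combining the three steps, the extended local tropicalization appears as a finite union of $L(\sigma,N)$-closures of cones of $\mathrm{Trop}_{\mathrm{loc}} Y$, which, since finite unions commute with topological closure, is the same as the closure of $\mathrm{Trop}_{\mathrm{loc}} Y$ itself inside $L(\sigma,N)$.

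The main obstacle, in my view, is not the limit-by-translation argument in the second paragraph, which is an immediate consequence of the pointwise-convergence topology on $L(\sigma,N)$, but rather extracting the right structural statement from \cite[Theorem 11.9]{PS 13}: one needs to know that every cone of the extended fan $\overline{\mathcal{F}}$ lying at infinity is genuinely the limit of a cone lying in the finite stratum $\sigma$, rather than some exotic cone disjoint from the closure of the finite part. Once this structural input is granted, the remaining steps amount to elementary manipulations with closures, finite unions, and the continuity of the projections $N_{\mathbb{R}}\twoheadrightarrow (N/N_\tau)_{\mathbb{R}}$.
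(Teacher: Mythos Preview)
Your approach is aligned with the paper's, which gives no detailed proof and simply asserts that the proposition ``follows formally from \cite[Theorem~11.9]{PS 13}''. Your elaboration correctly isolates the three ingredients: the easy containment via viewpoint {\em \ref{eq:closvalweight}.} of Theorem~\ref{thm:coincidthm}, the structural input from the extended-fan theorem, and the identification of the finite part of the extended tropicalization with $\mathrm{Trop}_{\mathrm{loc}} Y$ (which, as you note, amounts to the observation that a semivaluation whose weight vector lands in $\sigma$ is automatically interior, combined with the openness of $\sigma$ in $\overline{\sigma}$).

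There is one muddle worth straightening out. You write that ``by the very definition of an extended fan, each of its cones lying in a boundary stratum arises as the image under the projection of a cone of the underlying fan living in $\sigma$.'' This is \emph{not} a definitional property of extended fans: for a germ $Y$ contained entirely in the toric boundary, the extended local tropicalization lives entirely at infinity with empty finite part, so extended fans must in general allow cones at infinity that are not limits of finite cones. What distinguishes the \emph{interior} case is precisely the content of \cite[Theorem~11.9]{PS 13} in that situation---the analog of the second bullet of Theorem~\ref{thm:structloctrop}, namely that every maximal (extended) cone meets $\sigma^\circ$. Once this is granted, each maximal extended cone is the closure in $\overline{\sigma}$ of its nonempty intersection with $\sigma$, and your limit-by-translation argument runs through. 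You essentially acknowledge this in your final paragraph, where you correctly flag it as ``the main obstacle'' and attribute it to the structure theorem; but earlier you call it a definition. Keeping these straight---it is a theorem about interior germs, not a general feature of extended fans---removes the apparent inconsistency and makes the argument clean.
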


\begin{remark}
      We have to warn the reader wishing to continue studying the local tropicalization from \cite{PS 13}
      about a terminological difference with the present paper. 
      In \cite[Definitions 6.6 and 6.7]{PS 13}, we distinguished between
      the {\em positive} and {\em nonnegative local tropicalization},
        \index{Local tropicalization!positive} \index{Local tropicalization!non-negative} 
       whereas here we speak simply
      about {\em local tropicalization}. In fact, the main Definitions \ref{def:loctrop} and \ref{def:extloctrop}
       of this paper correspond to the nonnegative version of \cite{PS 13}. The difference 
       with the positive
      local tropicalization can be explained as follows. Let $Y$ be a subgerm of an affine toric germ
     $(\mathcal{X}_\sigma,0)$. Then we get the positive local tropicalization if in 
     Definition \ref{def:extloctrop}
    we consider only the semivaluations whose \emph{center} is exactly the unique closed orbit $0$
    of $\mathcal{X}_\sigma$, and the closure operation is performed in the \emph{interior} of the extended
    cone $\overline{\sigma}$. If we allow the semivaluations whose center only \emph{contains} the unique 
    closed orbit (as is implicitly done in Definition~\ref{def:extloctrop}, see also the very 
    Definition \ref{def:semival} of semivaluation used in this paper), then we get the nonnegative local
    tropicalization. The two local tropicalizations determine each other, see 
    \cite[Theorems~11.2, 11.9]{PS 13}. In this introductory text we felt no need to overload 
    the reader's attention with such technical details.
\end{remark}

To give the reader a better feeling of what is the extended tropicalization, let us consider one more
example. In all our definitions, it creates no problem to take the germ $Y$ to be 
{\em equal} with the germ 
$(\mathcal{X}_{\sigma},0)$ of affine toric variety itself. {\em Let us see what are the local
tropicalization of $(\mathcal{X}_{\sigma},0)$ and its extended version.}  
Assume that the base field $\mathbb{K}$ is the field $\mathbb{C}$ of complex numbers. First recall 
from Section \ref{sect:notations} that the set of $\mathbb{C}$-valued points 
$\mathcal{X}_{\sigma}(\mathbb{C})$ of the toric variety $\mathcal{X}_{\sigma}$
corresponding to a cone $\sigma\subset N_{\mathbb{R}}$ can be identified with the set of 
\emph{monoid homomorphisms} $\sigma^\vee\cap M\to \mathbb{C}$, where $\mathbb{C}$ 
is considered as a monoid with respect to
multiplication. If we replace $(\mathbb{C}, \cdot)$ by $(\mathbb{R}, \cdot)$, we get in a similar way 
the set of real points $\mathcal{X}_{\sigma}(\mathbb{R})$ of $\mathcal{X}_{\sigma}$,
and, if we replace $(\mathbb{C}, \cdot)$ by the monoid $(\mathbb{R}_{\geq 0}, \cdot)$ 
of nonnegative real numbers, 
we get its set $\mathcal{X}_{\sigma}(\mathbb{R}_{\geq 0})$ of nonnegative
real points. The group $(\mathbb{R}_{>0}, \cdot)$ is isomorphic via the continuous 
isomorphism $\boxed{\log}$ to the group $(\mathbb{R}, +)$, 
and if we extend $\log$ by setting formally $\log 0 :=-\infty$, we get an isomorphism
    \[  -\log\colon (\mathbb{R}_{\geq 0}, \cdot) \to (\overline{\mathbb{R}}, +)   \]
of monoids. Note that the subset $[0,+\infty]$ of $\overline{\mathbb{R}}$ corresponds under this isomorphism to the segment $[0,1]$ of $\mathbb{R}_{\geq 0}$. 

By composing any monoid morphism 
 $ \rho \colon (\sigma^{\vee}  \cap M, +)  \to (\mathbb{R}_{\geq 0}, \cdot)$ with $-\log$, 
 we get a monoid morphism 
    \[(-\log)\circ  \rho \colon (\sigma^\vee\cap M, +)   \to (\overline{\mathbb{R}}, +).  \]
    If such a morphism does not take the value $+\infty$, then it is simply an element 
  of $\sigma\subset N_{\mathbb{R}}$. Otherwise, it is an element of 
  $\overline{\sigma}\subset L(\sigma,N)$ belonging to a stratum at infinity, that is, 
  different from $N_{\mathbb{R}}$. It is easy to see that any such element can be realized 
  by a semivaluation on the formal local ring $\mathbb{C}[[\Gamma]]$ of $(\mathcal{X}_{\sigma}, 0)$. 
   For example, if $w\in \sigma^\circ$, we can define the associated semivaluation 
   (which is in fact a valuation) by:
     \[   \nu_w(f)  :=\min_{m\in \  \mathrm{supp}\  f} w\cdot m,  \ \ 
           \mbox{ for every } f\in \mathbb{C}[[\Gamma]]. \]

We conclude that: 

\begin{proposition}   \label{prop:loctroptoric}
     One has $\mathrm{Trop}_{\mathrm{loc}} (\mathcal{X}_{\sigma},0) =\sigma$, 
     whereas the extended 
     local tropicalization of $(\mathcal{X}_{\sigma}, 0) \hookrightarrow (\mathcal{X}_{\sigma}, 0)$ is 
     the extended cone $\overline{\sigma}$.   \index{Local tropicalization!of a toric germ} 
\end{proposition}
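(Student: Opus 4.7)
The plan is to handle the two equalities separately, as both admit direct arguments once one unwinds the definitions, and to use characterizations \ref{eq:closinitw} and \ref{eq:closvalweight} of Theorem \ref{thm:coincidthm} rather than the more geometric ones.

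\textbf{Step 1 (first equality).} First I would settle $\mathrm{Trop}_{\mathrm{loc}}(\mathcal{X}_{\sigma},0) = \sigma$ using the initial-weight-vector viewpoint of Theorem \ref{thm:coincidthm}. The germ $Y = (\mathcal{X}_{\sigma},0)$, embedded in itself, has defining ideal $I_Y = (0)$, so for every $w\in\sigma^{\circ}$ the initial ideal $\mathrm{in}_w I_Y = (0)$ contains no monomials and $w$ is an initial weight vector of $Y$. Hence the set of initial weight vectors contains all of $\sigma^{\circ}$, whose cone-closure in $N_{\mathbb{R}}$ is $\sigma$ itself, since $\sigma$ has nonempty interior by our standing assumption. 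The reverse inclusion is automatic from Definition \ref{def:initweightv}, which restricts initial weight vectors to $\sigma^{\circ}$.

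\textbf{Step 2 (extended version, inclusion $\subseteq$).} By Definition \ref{def:extloctrop}, every point of the extended tropicalization has the form $\nu\circ i_{\Gamma}$ for some (not necessarily interior) semivaluation $\nu$ of $\mathbb{C}[[\Gamma]]$. The map $m\mapsto \nu(\chi^m)$ is a morphism of monoids $(\Gamma,+)\to (\overline{\mathbb{R}}_{\geq 0},+)$; under the identification of $L(\sigma,N)$ with $\mathrm{Hom}((\Gamma,+),(\overline{\mathbb{R}},+))$ recalled in the text, the locus $\mathfrak{p}_{\nu}\cap\Gamma$ of monomials sent to $+\infty$ determines a face $\tau\leq\sigma$ with $\mathfrak{p}_{\nu}\cap\Gamma = \Gamma\smallsetminus\tau^{\perp}$, and the restriction of $\nu\circ i_{\Gamma}$ to $\tau^{\perp}\cap\Gamma$ extends to a non-negative linear form on $(N/N_{\tau})_{\mathbb{R}}$ lying in the image of $\sigma$. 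Hence $\nu\circ i_{\Gamma}\in \overline{\sigma}$.

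\textbf{Step 3 (extended version, inclusion $\supseteq$).} For the opposite inclusion I would give an explicit realization. Given a point of $\overline{\sigma}$ corresponding to a face $\tau\leq \sigma$ and a class $[w]\in \sigma/\mathbb{R}\tau \subseteq (N/N_{\tau})_{\mathbb{R}}$, consider the surjection $\mathbb{C}[[\Gamma]]\twoheadrightarrow \mathbb{C}[[\Gamma\cap\tau^{\perp}]]$ sending $\chi^m$ to $0$ for $m\in \Gamma\smallsetminus\tau^{\perp}$ (this is the local ring of the orbit closure $\overline{O_{\tau}}$). On the target, the formula $g\mapsto \min_{m\in\mathrm{supp}\,g}\langle w,m\rangle$ defines a valuation analogous to $\nu_w$, which makes sense because $\langle w,m\rangle$ depends only on the class $[w]$ when $m\in\tau^{\perp}$. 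Composing with the surjection produces a semivaluation $\nu$ of $\mathbb{C}[[\Gamma]]$ whose weight vector is precisely the prescribed point of $\overline{\sigma}$.

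\textbf{Main obstacle.} The only delicate point is Step 3 for $[w]$ lying on the boundary of $\sigma/\mathbb{R}\tau$ inside $(N/N_{\tau})_{\mathbb{R}}$, where the minimum in the definition of the valuation need not be achieved termwise in a power series; I would address this either by iterating the construction (choosing a further face of $\sigma$ containing $\tau$ in whose interior $[w]$ lies, modulo that face) or simply by invoking the fact that the extended tropicalization is closed in $\overline{\sigma}$ by Definition \ref{def:extloctrop} and approximating such boundary points by interior ones, for which the valuation $\nu_w$ is well-defined.
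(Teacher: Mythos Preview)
Your proposal is correct and follows essentially the same route as the paper's argument, which is the discussion immediately preceding the proposition: identify $\overline{\sigma}$ with the set of monoid morphisms $(\Gamma,+)\to([0,+\infty],+)$, observe that every valuative weight vector lands there, and realize each point by an explicit monomial semivaluation $\nu_w$ (the paper writes this out only for $w\in\sigma^\circ$ and leaves the rest as ``easy to see''). Your Steps~2--3 supply exactly the details the paper omits, including the identification of the infinite locus with a monoidal prime $\Gamma\smallsetminus\tau^\perp$ and the passage to the quotient ring $\mathbb{C}[[\Gamma\cap\tau^\perp]]$; your handling of boundary points by closure is also in the same spirit. The one cosmetic difference is that for the non-extended equality the paper implicitly uses the valuation $\nu_w$ again, whereas your Step~1 invokes the initial-ideal characterization \eqref{eq:closinitw} with $I_Y=(0)$ --- a cleaner shortcut, but not a genuinely different idea.
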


\begin{figure}[h!] 
\begin{center}
\begin{tikzpicture}[x=0.9cm,y=0.9cm, fill opacity=0.8] 

\begin{scope}[shift={(0,0)}]

 \fill[fill=yellow!50]  (0,0) -- (0, 3) -- (3.5 , 3) --(4,1) --cycle;

\draw[->, thick] (0,0) -- (4,1)  ;
\draw[->, thick] (0,0) -- (0,3)  ;

\draw[-, very thick] (0,3) -- (3.5,3)  ;
\draw[-, very thick] (4,1) -- (3.5,3)  ;
 
\node[draw,circle,inner sep=1.5pt,fill=orange, color=orange] at (3.5,3) {};

\node [left] at (0,0) {$0$};
\node [below] at (2,0.5) {$\lambda_1$};
\node [left] at (0, 1.5) {$\lambda_2$};
\node [left] at (3.8, 1.5) {$\sigma_{\lambda_1} \subset(N/N_{\lambda_1})_{\mathbb{R}}$};
\node [above] at (1.8, 3) {$\sigma_{\lambda_2} \subset(N/N_{\lambda_2})_{\mathbb{R}}$};

\end{scope}

%%%%%%%%%%%%%%%%%%%%%

\begin{scope}[shift={(8,0)}]

\draw[dotted, very thick] (4,0) -- (4,4) -- (0,4)  ;
\draw[dotted, very thick] (5,6) -- (4,4)  ;
 \fill[fill=yellow!50]   (0, 0) -- (4, 0) --(5,1) --(5,6) -- (1,5) -- (0,4) -- cycle;
  \fill[fill=blue!20]  (0, 0) -- (4,0) --(5,1) -- (2,1) --cycle;
  \fill[fill=pink]  (1, 2) -- (2, 2) --(5,6) -- (1,5) --cycle;

\draw[->, thick] (0,0) -- (4,0)  ;
\draw[->, thick] (0,0) -- (2,1)  ;
\draw[->, thick] (0,0) -- (0,4)  ;

\draw[-, very thick] (4,0) -- (5,1) -- (2,1) -- (2,2) -- (1,2) -- (1,5) -- (0,4)  ;
\draw[-, very thick] (5,6) -- (1,5)  ;
\draw[-, very thick] (5,6) -- (2,2)  ;
\draw[-, very thick, blue] (5,6) -- (5,1)  ;
 
\node[draw,circle,inner sep=1.5pt,fill=orange, color=orange] at (5,6) {};

\draw[->, very thick, red] (0,0) -- (1,2)  ;

\node [left] at (0,0) {$0$};
\node [left] at (1,1.2) {\red{$\lambda$}};
\node [below] at (2, 0.6) {$\tau$};
\node [left] at (5, 2) {\blue{$\sigma_{\tau} \subset(N/N_{\tau})_{\mathbb{R}}$}};
\node [above] at (2.5, 4.5) {$\sigma_{\lambda} \subset(N/N_{\lambda})_{\mathbb{R}}$};

\end{scope}

 \end{tikzpicture}
\end{center}
\caption{The extended local tropicalizations $\overline{\sigma}$ of a germ of affine toric variety 
         $(\mathcal{X}_{\sigma}, 0)$ of dimension $2$ on the left and dimension $3$ on the right 
        (see Example \ref{ex:extloctrop}).}  
   \label{fig:extloctropafftoric}
    \end{figure}
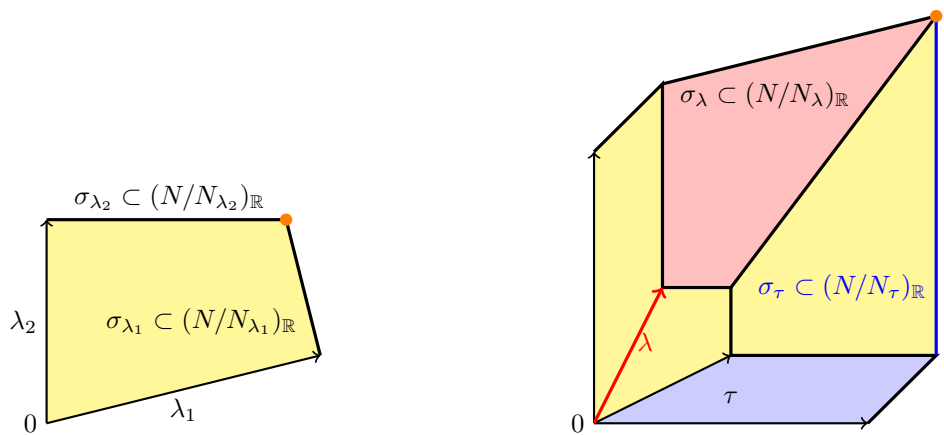

    \begin{example}   \label{ex:extloctrop}
         In Figure \ref{fig:extloctropafftoric} are shown the structures of the extended local tropicalizations of 
         formal germs $(\mathcal{X}_{\sigma}, 0)$, for a $2$-dimensional cone $\sigma$ on the left 
         and a three-dimensional one with four rays on the right (see also \cite[Figures 3 and 4]{PS 13}). 
         If $\zeta$ is a face of the cone $\sigma$, we denote by $\sigma_{\zeta}$ the image of 
         $\sigma$ by the quotient linear map $N_{\mathbb{R}} \to (N/N_{\zeta})_{\mathbb{R}}$. 
         It is the intersection of 
         the extended cone $\overline{\sigma}$ with the stratum $ (N/N_{\zeta})_{\mathbb{R}}$. 
         We denote by 
         $\lambda_1, \lambda_2$ the edges of the left-hand example and by $\lambda, \tau$ an edge 
         and a $2$-dimensional face respectively
         of the right-hand $3$-dimensional example. In both drawings 
         we have indicated by a filled disk the $0$-dimensional stratum $(N/N_{\sigma})_{\mathbb{R}}$ at 
         infinity. 
    \end{example}

In \cite[Section 12]{PS 13} we have compared local tropicalizations with the global 
tropicalization of subvarieties of algebraic tori. We showed in particular (see \cite[Theorem 12.11]{PS 13}) that if $Y \hookrightarrow \mathcal{X}_{\sigma}$ 
is an algebraic subvariety of the affine toric variety $\mathcal{X}_{\sigma}$ with dense torus $T_N$,  
whose germ $Y$ is an interior toric subgerm, then the local 
tropicalization of $Y$ is equal to the intersection of the cone $\sigma$ with the global tropicalization of 
$Y \cap T_N \hookrightarrow T_N$. 

Local tropicalization has also {\em functorial properties}, 
described in \cite[Proposition 6.12]{PS 13}. This allowed us to generalize in
\cite[Section 13]{PS 13} the definition of local tropicalization to germs of subvarieties of toric 
(not necessarily affine) varieties, as well as to subvarieties of toroidal embeddings in the sense of
\cite{KKMS 73}. However, a much stronger generalization is possible, as we explain 
in Subsection \ref{ssec:Ulogtrop}.

\subsection{Ulirsch's tropicalization of logarithmic schemes}  \label{ssec:Ulogtrop} 
$\ $ \medskip

\index{Local tropicalization!of logarithmic schemes} 
The most basic piece of data which gives rise to a local tropicalization is a morphism of monoids 
\begin{equation}\label{eq:localmonmorphism}
      \gamma\colon (\Gamma, +) \to (\mathcal{O}, \cdot)
\end{equation}
where $\Gamma :=\sigma\cap M^\vee$ as before and $\mathcal{O}$ is a local ring, 
such that $\gamma$ sends all the non-units of $\Gamma$ to the maximal ideal of $\mathcal{O}$. 
If, from a general point of view, the choice of monoids $\Gamma$ of that particular
form seems not motivated enough, note that they can be characterized by abstract properties
as the monoids which are simultaneously {\em commutative, finitely generated, cancellative, torsion free}, and {\em saturated}. The monoids satisfying the first three properties are also called \emph{fine}. Morphisms of the form \eqref{eq:localmonmorphism} were the basic
building blocks used in \cite{PS 13} for the development of various generalizations of the local
tropicalizations, trying to stick to elementary methods. The theory of {\em logarithmic structures}, initiated by Fontaine and Illusie and first described by Kato in his 1989 paper \cite{K 89}, incorporates monoid morphisms of this sort in a very general framework. For details on 
this framework, one may consult Ogus' textbook \cite{O 18}. 

A {\bf logarithmic structure}   \index{Logarithmic structure}
on a ringed space $(X, \mathcal{O}_X)$ 
is a sheaf $\mathcal{M}_X$ of monoids together
with a morphism $\alpha\colon \mathcal{M}_X \to \mathcal{O}_X$ of sheaves of monoids, 
such that $\alpha$ induces an isomorphism $\alpha^{-1}(\mathcal{O}_X^*)\simeq \mathcal{O}_X^*$.

An important class of logarithmic structures  
comes from pairs $(X,D)$, where $X$ is a normal scheme and $D$ is a hypersurface 
of it. The associated {\bf divisorial logarithmic structure}   \index{Logarithmic structure!divisorial}
on $X$ is defined by
      \[ \mathcal{M}_X(U)   :=\{f\in \mathcal{O}_X(U) \,|\, f|_{U-D}\in \mathcal{O}_{X}^{*}(U)\},  \]
 for every open subset $U$ of $X$. That is, $\mathcal{M}_X$ is the sheaf of regular functions on $X$ which 
 do not vanish outside $D$.  Toric varieties and toroidal embeddings possess therefore canonical logarithmic structures, which are the divisorial logarithmic structures induced by their boundaries. 
Moreover, to each sufficiently good logarithmic scheme (namely a fine and saturated one), 
it is possible to associate a \emph{conical complex}, 
both in an extended and non-extended variants. Informally,
a conical complex is like a fan, but its maximal cones do not have to be contained in a fixed vector space. In \cite{U 17}, Martin Ulirsch developed a theory of tropicalization of logarithmic schemes
and their subvarieties. In the special case where $X$ is an affine toric variety, 
it recovers the local tropicalization described at the end of Subsection \ref{ssec:extloctrop}. 
In \cite{U 15}, Ulirsch proved an analog of (1) of 
Theorem \ref{thm:coincidthm} for the case of so called {\em log-regular varieties}.

We would like to mention one more aspect of the theory of logarithmic structures which is of particular
interest to singularity theory. Namely, this theory
provides also a general framework for the operation of \emph{rounding},
or \emph{real oriented blowing up}, of algebraic varieties and analytic spaces 
(see \cite{P 24}). This gives a new way
of approaching some problems connected to Milnor fibrations, as explained in 
\cite[Section~3.1]{CPS 23}.

\subsection{Other generalizations of local tropicalization}    \label{ssec:othergen}    
$\ $ \medskip

In \cite[Section 2]{E 21}, Alexander Esterov defined a notion of {\em local tropical fan}  
\index{Fan!local tropical} as an independent object, i.e., not necessarily coming from a tropicalization. 
Then,  in \cite[Definition 3.4]{E 21} he defined the local tropicalization of an analytic germ 
$Y \hookrightarrow \mathcal{X}_\sigma$ of pure dimension $k$
essentially as in point {\em \ref{eq:compactcone}.} of  Theorem \ref{thm:coincidthm},  
but endowing it with an additional structure, namely a {\em weight function} 
that assigns a positive integral weight  
to each maximal cone of a particular kind of fan $\mathcal{F}$ whose support is the 
local tropicalization. 
If $Y$ is of pure dimension $k$, then the weight of a $k$-dimensional cone $\tau$ equals
the intersection number  of $Y$ with the orbit $O_{\tau}$ corresponding to $\tau$ 
(see also \cite[Definition 3.20]{CPS 24}). 
Moreover, in order to develop an intersection theory of local tropical fans and of local tropical
characteristic classes he showed that one can associate to a $k$-dimensional subtoric germ not only the
tropicalization above, which is a $k$-dimensional local tropical fan, 
but a sequence of tropical fans of dimension 
$0,1,\ldots,k$, where the support of the $k$-dimensional fan is the local tropicalization, 
and which reflects more information about the germ. Esterov applied this extension 
of the theory of local tropicalization to the study of the 
monodromy of complex analytic functions.

We would like also to mention two recent developments in the theory of tropicalization. 
They are however not so much concerned with the local tropicalization as with the general 
question of determining which are the most general objects which may be tropicalized.

In \cite{GG 16} and \cite{GG 22}, Jeffrey and Noah Giansiracusa investigated the connections 
between {\em tropicalization}, {\em $\mathbb{F}_1$-schemes} (i.e., schemes over the so-called 
{\em field with one element}), and {\em Berkovich analytification} of
algebraic varieties. A $\mathbb{K}$-scheme {\bf with a model over $\mathbb{F}_1$} is a scheme over a field $\mathbb{K}$ which admits an affine covering where the coordinate ring of each affine piece is a monoid ring and their gluing is induced by the localizations of monoids. In the affine case $X=\mathrm{Spec} (A)$, where $A$ is a 
$\mathbb{K}$-algebra and $\mathbb{K}$ is a valued field, the underlying set of the 
{\bf Berkovich analytification} $X^{\mathrm{an}}$   \index{Berkovich analytification} 
of $X$ is the set of all semivaluations of $A$ extending the given valuation on $\mathbb{K}$. In particular, $\mathbb{K}$ can be trivially valued, that is, $0\in \mathbb{K}$ has value $+\infty$ and all the other elements of $\mathbb{K}$ have value $0$. 
The pioneering work on this subject was the 2009 paper \cite{P 09} in which Sam Payne 
considered all possible embeddings of an affine algebraic variety $X$ 
into different affine spaces $\mathbb{K}^n$, viewed as toric varieties.  
He noted that a monomial map between such embeddings induces a natural 
map between the corresponding
extended global tropicalizations, and he showed that the associated category-theoretic limit 
of all such tropicalizations seen as sets can be identified with the analytification $X^{\mathrm{an}}$. 

In \cite{GG 16}, J. and N. Giansiracusa generalized  the construction of tropicalization from embeddings 
of some scheme $X$ over $\mathbb{K}$ to toric varieties to the
embeddings of $X$ to a $\mathbb{K}$-scheme equipped with a model over $\mathbb{F}_1$. 
Also, they endowed this tropicalization with a kind of scheme structure. 
In \cite{GG 22}, they constructed an embedding of a $\mathbb{K}$-scheme
$X$ to an $\mathbb{F}_1$-scheme $\widehat{X}$ which is \emph{universal} in the sense that its 
tropicalization admits a morphism to the tropicalization of $X$ with respect to any other embedding.
Moreover, they proved that the Berkovich analytification $X^{\mathrm{an}}$ can be recovered as the
tropicalization of this universal embedding $X\hookrightarrow \widehat{X}$. 
Finally, they reproved and refined Payne's limit result.

The paper \cite{L 23} of Oliver Lorscheid is an attempt to determine even a deeper structure 
than that of an $\mathbb{F}_1$-scheme that underlies all the instances of tropicalization. 
The author proposed for that role  \index{Ordered blueprints} 
his theory of \emph{ordered blueprints}. Also this theory enhances the tropicalization with a scheme
structure. One may find in \cite{L 23} the technical definitions of the ordered blueprint 
and the \emph{ordered blue scheme} as well as the references to other works of Lorscheid  
on the theory of blueprints.
Here we only draw the reader's attention to the fact that Lorscheid's work succeeds in interpreting 
Berkovich analytification, Kajiwara-Payne tropicalization \cite{K 08}, J. and N. Giansiracusa tropicalization,
Ulirsch tropicalization and some other approaches to tropical geometry via his theory
of ordered blueprints.

{\bf Acknowledgements: } 
   This research was funded, in whole or in part, by l'Agence Nationale de la Recherche (ANR), project SINTROP (ANR-22-CE40-0014) and Labex CEMPI (ANR-11-LABX-0007-01).  We thank Jose Luis Cisneros Molina,  D\~{u}ng Tr\'ang L\^e and Jose Seade for the invitation to contribute to this volume of the Handbook of Geometry and Topology of Singularities. We are grateful to Mar\'{\i}a Ang\'elica Cueto for everything we understood about local tropicalization during our collaboration with her and for her careful reading of the first version of this paper. We thank also Evelia Garc\'{\i}a Barroso, Arthur Renaudineau, Bernard Teissier and the anonymous referee for their remarks on previous versions of 
this text.

\noindent
\textbf{\small{Authors' addresses:}}
\smallskip
\

\noindent
\small{P.\ Popescu-Pampu,
  Univ.~Lille, CNRS, UMR 8524 - Laboratoire Paul Painlev{\'e}, F-59000 Lille, France.
  \\
\noindent \emph{Email address:} \url{patrick.popescu-pampu@univ-lille.fr}}
\vspace{2ex}

\noindent
\small{D.\ Stepanov, Department of Higher Mathematics and Center of Fundamental Mathematics, Moscow Institute of Physics and Technology,
9 Institutskiy per., Dolgoprudny, Moscow, 141701, Russia.
  \\
\noindent \emph{Email address:} \url{stepanov.da@phystech.edu}}

\medskip
\end{document}